\DeclareMathOperator*{\Spec}{Spec}
\def\NZQ{\mathbb}               
\def\NN{{\NZQ N}}
\def\Um{{\operatorname{Um}}}          
\def\E{{\operatorname{E}}}            
\def\GL{{\operatorname{GL}}}          
\def\M{{\operatorname{M}}}            
\def\Um{{\operatorname{Um}}}          
\def\vpmod{\!\!\pmod} 
\DeclareMathOperator*{\height}{height} 
\DeclareMathOperator*{\Trans}{Trans}
\DeclareMathOperator*{\ETrans}{ETrans}
\DeclareMathOperator*{\End}{End}
\DeclareMathOperator*{\Hom}{Hom}
\DeclareMathOperator*{\Aut}{Aut}
\newtheorem{theorem}{Theorem}[section]
\newtheorem{lemma}[theorem]{Lemma}
\newtheorem{corollary}[theorem]{Corollary}
\newtheorem{proposition}[theorem]{Proposition}
\newtheorem{remark}[theorem]{Remark}
\newtheorem{definition}[theorem]{Definition}
\newtheorem{question}[theorem]{Question}
\newtheorem*{acknowledgement}{Acknowledgement}
\begin{document}

\title{\small on the existence of unimodular elements and cancellation of projective modules over noetherian and non-noetherian rings}

\author{{\bf Anjan Gupta}\\
Tata Institute of Fundamental Research, Mumbai }

\footnote{Correspondence author: Anjan Gupta; 
{\it email: anjan@math.tifr.res.in, agmath@gmail.com}}

\maketitle

\noindent {\it Mathematics Subject Classification: 13B25, 13C10.}

\vskip3mm

\noindent {\it Key words : unimodular elements, transvections, projective modules.}
\subjclass{}

\vskip3mm

\begin{abstract}  Let $R$ be a commutative ring of dimension $d$, $S = R[X]$ or $R[X, 1/X]$ and $P$ a finitely generated projective $S$ module of rank $r$. Then $P$ is cancellative if $P$  has a unimodular element and $r \geq d + 1$. Moreover if  $r \geq \dim (S)$ then $P$ has a unimodular element and therefore  $P$ is cancellative. As an application we have proved that if $R$ is a ring of dimension $d$ of finite type over a Pr\"{u}fer domain and $P$ is a projective $R[X]$ or $R[X, 1/X]$ module of rank at least $d + 1$, then $P$ has a unimodular element and is cancellative.  
\end{abstract}

\section{introduction}
A projective $R$-module $P$ is said to have a unimodular element if $P = R \oplus Q$ for some submodule $Q$ of $P$. $P$ is called cancellative if $Q \oplus P \cong Q \oplus P'$ for some projective $R$-modules $P', Q$ implies that $P \cong P'$. 

Now assume that $R$ is a commutative noetherian ring of dimension $d$ and $P$ a finitely generated projective $R$ module of rank  $r \geq d + 1$. A classical result of Serre  asserts that $P$  has a unimodular element. Subsequently Bass in \cite{bass} proved that  $P$ is also cancellative. Much later Bhatwadekar-Lindel-Rao in \cite{blr} proved that if $P$ is a finitely generated projective module over $R[X_1, \ldots, X_n, Y_1^{\pm 1}, \ldots, Y_m^{\pm1}]$ of rank $r \geq d + 1$, $d = \dim(R)$, then $P$ has a unimodular element.  Using their result Rao in \cite{rao} showed that projective modules over $R[X_1, \ldots, X_n]$ of rank $r \geq \{d + 1, 2\}$, $d = \dim(R)$ are  cancellative. Finally in \cite{lind}, Lindel extended Rao's result by showing that projective modules over $R[X_1, \ldots, X_n, Y_1^{\pm 1}, \ldots, Y_m^{\pm1}]$ of rank $r \geq \{d + 1, 2\}$, $d = \dim(R)$ are  also cancellative.

Just after the solution of Serre's conjecture by Quillen and Suslin, work started to explore projective modules over a non-noetherian base ring $R$. An early result of Brewer and Costa in \cite{brecosta} asserts that if $R$ is a ring of dimension zero, then any finitely generated projective module over $R[X_1, X_2, \ldots, X_n]$ is extended from $R$. Surprisingly, Heitmann in \cite{heit} proved that  both Serre's splitting theorem and Bass's cancellation theorem are true for rings not necessarily noetherian. This leads us to investigate whether similar phenomena occur for projective modules over $R[X_1, \ldots, X_n, Y_1^{\pm 1}, \ldots, Y_m^{\pm1}]$ over non-noetherian base rings $R$.
   
However not much was known in this direction until a recent result of Yengui in \cite{yengui} which states that any stably free module over $R[X]$ of rank $r \geq d + 1$ is cancellative where $d$ is the dimension of the base ring $R$. Later  Abedelfatah in \cite{abed} proved that a stably free module over $R[X, 1/X]$ is also cancellative if its rank $r \geq d + 1$, $d$ being the dimension of  $R$. In this article we show the following.

\begin{theorem}{\rm(Theorems \ref{u15}, \ref{u13})}\label{1}
Let $R$ be a ring of dimension $d$ and $S = R[X]$ or $R[X, 1/X]$. Let $P$ be a finitely generated projective $S$-modules of rank $r \geq \dim(S)$. Then the following are true.
\begin{itemize}
\item[1.]
The natural map $\Um(P) \rightarrow \Um(P/XP)$ is surjective when $S = R[X]$.
\item[2.]
The natural map $\Um(P) \rightarrow \Um(P/(X - 1)P)$ is surjective when $S = R[X, 1/X]$.
\end{itemize}
\end{theorem}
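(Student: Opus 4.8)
The plan is to realize both statements as a single lifting problem and to solve it by the Eisenbud--Evans/Serre ``moving'' technique relative to the ideal being killed, the key point being that this ideal is comaximal with the order ideal of any lift. Write $I = XS$ in case (1) and $I = (X-1)S$ in case (2), so that $S/I = R$ and the map in question is $\Um(P) \to \Um(P/IP)$. Given $\bar u \in \Um(P/IP)$, I would first choose any lift $u \in P$ and record the resulting relation between order ideals: since $\bar u$ is unimodular, $O_P(u) + I = S$, where $O_P(u) = \{\varphi(u) : \varphi \in \Hom_S(P,S)\}$ is the order ideal of $u$. I then pick $a \in O_P(u)$ with $a \equiv 1 \pmod I$.

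The decisive point is a dimension estimate for the non-basic locus $V(O_P(u))$ of $u$, and I claim that $a$ is a non-zerodivisor of $S$. Indeed, in case (1) the congruence $a \equiv 1 \pmod{XS}$ forces the constant term of $a \in R[X]$ to be $1$, while in case (2), evaluating $a \equiv 1 \pmod{(X-1)S}$ at $X=1$ forces $\sum_i a_i = 1$ for $a = \sum_i a_i X^i \in R[X,1/X]$; either way some $R$-combination of the coefficients of $a$ equals $1$. By McCoy's theorem a (Laurent) polynomial is a zerodivisor only if a single nonzero element of $R$ annihilates all of its coefficients, which is impossible here, so $a$ is a non-zerodivisor (and we may assume it is a non-unit, since otherwise $u$ is already unimodular). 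Because every minimal prime of a commutative ring consists of zerodivisors, $a$ lies in no minimal prime of $S$, whence $\dim S/(a) \le \dim S - 1$. As $(a) \subseteq O_P(u)$, this yields $\dim\big(S/O_P(u)\big) \le \dim S - 1 < r$.

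With this in hand I would invoke the non-noetherian form of Serre's splitting theorem of Heitmann \cite{heit}, in its relative (Eisenbud--Evans) formulation: if $P$ is projective of rank $r$, $u \in P$, and $r > \dim\big(S/O_P(u)\big)$, then $u$ can be made basic everywhere without being disturbed where it is already basic; concretely there is $w \in IP$ with $u + w \in \Um(P)$. Here the freedom to take the correction $w$ in $IP$ is exactly what the comaximality $O_P(u) + I = S$ provides, since $I$ restricts to the unit ideal on the non-basic locus $V(O_P(u))$, which is disjoint from $V(I)$. Because $w \in IP$, the element $u+w$ still reduces to $\bar u$ in $P/IP$, so $\bar u$ lies in the image of $\Um(P) \to \Um(P/IP)$, proving surjectivity in both cases simultaneously, and explaining why the single hypothesis $r \ge \dim S$ suffices uniformly.

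The hard part will be justifying the relative splitting step in the non-noetherian setting. Heitmann establishes the \emph{absolute} existence of a unimodular element when $r > \dim S$, but the lifting problem needs the \emph{relative} basic-element theorem of Eisenbud--Evans that prescribes the correction term to lie in $IP$ and leaves $u$ untouched on its basic locus; I would have to verify that the generalized-dimension/basic-element machinery underlying it goes through for arbitrary commutative rings, replacing noetherian induction by induction on Krull dimension via boundary ideals and comaximal localizations. A secondary subtlety, also non-noetherian in nature, is the implication ``$a$ a non-unit non-zerodivisor $\Rightarrow \dim S/(a) \le \dim S - 1$,'' which I would secure through the fact that minimal primes consist of zerodivisors rather than through any chain condition on $S$ itself.
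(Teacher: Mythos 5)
Your reduction of both statements to a single lifting problem is clean, and your opening observations are correct: for any lift $u$ of $\bar u$ one has $O(u)+I=S$, any $a\in O(u)$ with $a\equiv 1\pmod I$ is a non-zerodivisor by McCoy's theorem, hence lies in no minimal prime of $S$, and therefore $\dim\bigl(S/O(u)\bigr)\le \dim S-1<r$. The gap is the step you yourself flag as ``the hard part'': the relative basic element theorem asserting that $r>\dim\bigl(S/O(u)\bigr)$ alone permits a correction $w\in IP$ with $u+w\in\Um(P)$. Nothing in the paper, and nothing in Heitmann's results as used here, delivers this. Theorem \ref{P6} and Corollary \ref{P8} require the rank to exceed the Krull dimension of the \emph{ambient} (localized) ring, not of the non-basic locus $V(O(u))$; they produce unimodularity only after localization, and converting local unimodularity into global unimodularity requires combining information from complementary loci (as in Corollary \ref{P8}, or via Lemma \ref{u1}). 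The Eisenbud--Evans refinement you invoke, where the bound is governed by the dimension of $V(O(u))$, is proved in the noetherian setting by prime avoidance over a noetherian spectrum or by generalized dimension functions; establishing it for arbitrary commutative rings is not a routine verification but would be a new theorem, and it is precisely the sort of strengthening whose absence forces the hypothesis $r\ge\dim S$ here rather than something sharper (cf.\ Question \ref{mr2e1}). As written, your proof therefore rests on an unproved main lemma.

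It is worth seeing how the paper gets the same dimension drop of one without that lemma, because the route is genuinely different. The drop comes not from $\dim\bigl(S/O(u)\bigr)$ but from Lemma \ref{u5}: inverting the monic (resp.\ bimonic) polynomials lowers $\dim S$ by one, so Theorem \ref{P6} applies over $T^{-1}S$ with $r\ge\dim S=\dim(T^{-1}S)+1$ and yields a lift $q$ whose order ideal contains a monic (resp.\ bimonic) polynomial. This is then combined, via Lemma \ref{u1}, with unimodularity of $q$ modulo $\mathfrak{J}S$ for $\mathfrak{J}$ the Jacobson radical of $R$ --- obtained from Corollary \ref{P8} --- to conclude $O(q)=S$; that is Proposition \ref{u71}, valid when $\height\mathfrak{J}\ge 1$. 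The general case is then reached by a Quillen-type patching induction over a cover of $\Spec(R)$ by loci where $P$ is extended (the Quillen ideal and Lemma \ref{u81} for $R[X]$ in Theorem \ref{u15}; Proposition \ref{u9} and Lemma \ref{u82} for $R[X,1/X]$ in Theorem \ref{u13}), using the transitivity results of Propositions \ref{ce11} and \ref{ce12} to match the local lifts on overlaps. Your single missing lemma would have to substitute for all of this machinery; either supply a proof of it valid without noetherian hypotheses, or restructure the argument along the monic-localization and patching lines above.
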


Note that the above theorem together with Heitmann's result show that $P$  has a unimodular element. If $R$ is noetherian, then $\dim(S) = \dim(R) + 1$. Therefore, our result generalizes the corresponding result of Lindel for noetherian rings in \cite{lind}. A result of Seidenberg in  \cite{seidenberg} states that $\dim(S)$ can be any number in between $d + 1$ and $2d + 1$. So our lower bound for the rank of $P$ in Theorem \ref{1} is much weaker than expected. However if $P$ has a unimodular element, then $P$ is cancellative if its rank $r \geq \dim(R) + 1$. To show this we  prove the following.

\begin{theorem}\label{2}{\rm(Theorem \ref{ce18})}
Let $R$ be a ring of dimension $d$, $I$ an ideal of $R$ and $S = R[X]$ or $R[X, 1/X]$. Let $P$ be a finitely generated projective $S$-module of rank $r \geq d$ and $Q = S^2 \oplus P$. Then $\ETrans(Q, I)$ acts transitively on $\Um(Q, I)$.
\end{theorem}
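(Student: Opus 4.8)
The goal being transitivity of $\ETrans(Q, I)$ on $\Um(Q, I)$, it is enough to move an arbitrary $v \in \Um(Q, I)$ to the base point $e_1 = (1,0,0) \in S^2 \oplus P$. The plan is to exploit the distinguished fibre of $S$ over $R$ --- the point $X = 0$ when $S = R[X]$, and $X = 1$ (i.e. reduction mod $X-1$) when $S = R[X, 1/X]$, whose residue ring is $R$ in both cases --- in two stages: first normalize $v$ so that it reduces to $e_1$ along this fibre, using Heitmann's cancellation theorem over $R$; then kill the remaining $X$-dependence by a Horrocks-type local-global argument over $S$. Throughout, the free summand $S^2$ supplies the coordinate that realizes the transvections and the room needed for the Mennicke--Newman manipulations, and at each step one must check that the transvections produced reduce to the identity modulo $I$, so that they lie in $\ETrans(Q, I)$.

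Stage one (reduction along the fibre). Let $v_0$ be the image of $v$ in $Q/XQ = R^2 \oplus \bar P$, where $\bar P = P/XP$ has rank $r \geq d$ over $R$; then $v_0 \in \Um(R^2 \oplus \bar P, I)$. Since $R \oplus \bar P$ has rank $r + 1 \geq d + 1 > \dim(R)$, it is cancellative by Heitmann's non-noetherian extension of Bass's theorem, which in its relative form says precisely that $\ETrans(R^2 \oplus \bar P, I)$ acts transitively on $\Um(R^2 \oplus \bar P, I)$; hence some $\bar\tau$ in this group carries $v_0$ to $e_1$. Because the reduction map $\ETrans(Q, I) \to \ETrans(R^2 \oplus \bar P, I)$ is surjective (transvections lift: an element of $\bar P$ lifts to $P$, and a functional lifts since $P$ is projective), I may lift $\bar\tau$ to $\tau \in \ETrans(Q, I)$ and replace $v$ by $\tau(v)$. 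We may therefore assume $v \equiv e_1 \pmod{XQ}$.

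Stage two (killing the $X$-dependence). It remains to show that a $v \in \Um(Q, I)$ with $v \equiv e_1 \pmod{XQ}$ lies in $\ETrans(Q, I) \cdot e_1$. Here I would apply the Quillen--Suslin--Vaserstein local-global principle for the relative transvection group: it suffices to prove the analogous statement after localizing $R$ at each maximal ideal $\mathfrak m$. Over the local ring $R_{\mathfrak m}$, Horrocks' theorem makes $P_{\mathfrak m}$, and hence $Q_{\mathfrak m}$, free over $R_{\mathfrak m}[X]$, and a Horrocks-type theorem on unimodular elements over $A[X]$ for $A$ local says that a unimodular row equal to $e_1$ at the fibre is elementarily equivalent to $e_1$; this gives the local equivalence, which the local-global principle then patches into the required element of $\ETrans(Q, I)$. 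For $S = R[X, 1/X]$ the same scheme runs with $X = 1$ in place of $X = 0$ and the Laurent (Bass--Heller--Swan) version of Horrocks.

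The main obstacle is Stage two, specifically making the local-global principle and the Horrocks input work at the relative level $I$ and in the non-noetherian setting simultaneously: the correct statement is a doubly relative principle, relative both to $I$ and to the fibre (i.e. to transvections trivial along $X = 0$, resp. $X = 1$), and its proof must redo Vaserstein's dilation argument without noetherian hypotheses, in the spirit of Heitmann's methods. A secondary difficulty is the Laurent case, where $X$ is a unit so one cannot specialize at $0$ and must instead specialize at $X = 1$ and use a Horrocks theorem for $R_{\mathfrak m}[X, 1/X]$.
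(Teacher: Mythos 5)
Your Stage one is harmless (and the relative form you want is exactly what the paper's excision trick in Remark \ref{ce4} provides), but Stage two contains a fatal error. You assert that ``over the local ring $R_{\mathfrak m}$, Horrocks' theorem makes $P_{\mathfrak m}$ free over $R_{\mathfrak m}[X]$.'' Horrocks' theorem (Theorem \ref{u10}) says nothing of the sort: it only promises that $P_{\mathfrak m}$ is extended from $R_{\mathfrak m}$ \emph{if} it becomes extended after inverting a monic, and there is no reason for that here. In fact the claim is false even for noetherian local rings: if $A$ is a one-dimensional local domain that is not seminormal (e.g.\ $k[t^2,t^3]$ localized at the origin), then $\operatorname{Pic}(A[X])\neq 0$, so $A[X]$ carries non-free, non-extended projectives. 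A second, independent problem is that the local--global principle you invoke (Lemma \ref{ce10}, after Bak--Basu--Rao) compares $v(X)$ with $v(0)$ for unimodular elements of $Q[X]$ where $Q$ is a projective $R$-module; the dilation $X\mapsto \lambda X$ at the heart of the patching makes no sense for a projective $S$-module that is not extended from $R$. So even if the local statements held, you could not glue them. These two failures are precisely the obstructions the paper is designed to circumvent.

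The paper's actual route is quite different. It reduces to the absolute case by excision (Remark \ref{ce4}), then inducts on $\dim R$ rather than working fibrewise over $X$: for each minimal prime $\mathfrak p$ the ring $R_{\mathfrak p}$ is zero-dimensional, so $P_{\mathfrak p}$ \emph{is} free by Lemma \ref{u11}, yielding elements $s_{\mathfrak p}$ for which $(P,s_{\mathfrak p})$ is a Lindel pair (Definition \ref{ce16}) --- this is the non-noetherian surrogate for ``choose $s$ a nonzerodivisor with $P_s$ free.'' The ideal $J$ they generate has height $\geq 1$, the induction hypothesis moves $v$ into $\Um(Q,J)$, Propositions \ref{ce8} and \ref{ce9} (Roitman's degree-lowering on the coordinate $f\in S^2$, using that $R[X]/(f')$ is integral over $R_a$ and hence of smaller dimension) settle the case where $v\equiv e_1$ modulo the Jacobson radical, and finally Lemma \ref{ce17} glues along the partition of unity $\sum s_i=1$ using the Lindel-pair structure in place of a Quillen-type local--global principle. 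If you want to rescue your outline, you would need to replace ``localize at maximal ideals and apply Horrocks'' by ``localize at minimal primes and exploit zero-dimensionality,'' and replace the standard local--global principle by a gluing statement valid for non-extended modules --- which is essentially Lemmas \ref{ce15} and \ref{ce17}.
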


Theorems \ref{1} and \ref{2} together show the following.

\begin{theorem}{\rm(Theorem \ref{mr2})}\label{3}
Let $R$ be a ring of dimension $d$ and $S = R[X]$ or $R[X, 1/X]$. Let $P$ be a finitely generated projective $S$-module of rank $r$.
Then $P$ is cancellative in the following cases. 

\begin{itemize}
\item [(1)] $P$ has a unimodular element and $r \geq d + 1$.

\item [(2)] Rank of $P$ is at least equal to $\dim(S)$.

\end{itemize}
\end{theorem}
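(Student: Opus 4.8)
The plan is to deduce both statements formally from Theorems \ref{1} and \ref{2}, using the classical translation between cancellation and transitive actions on unimodular elements. Recall the mechanism I would invoke: a finitely generated projective $S$-module $P$ is cancellative as soon as $\Aut(P \oplus S)$ acts transitively on $\Um(P \oplus S)$. Indeed, given $P'$ with an isomorphism $\theta \colon P' \oplus S \cong P \oplus S$, the image $q = \theta(0,1)$ of the standard basis vector is a unimodular element of $P \oplus S$ with $(P \oplus S)/Sq \cong P'$, while the standard vector $e = (0,1) \in P \oplus S$ satisfies $(P \oplus S)/Se \cong P$; any automorphism $\sigma$ with $\sigma(e) = q$ then induces $P \cong P'$. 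Since $\ETrans(Q) \subseteq \Aut(Q)$, it suffices to produce transitivity of the elementary transvection group on unimodular elements.

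For part (1) I would use the hypothesis that $P$ has a unimodular element to write $P = S \oplus P_1$ with $P_1$ finitely generated projective of rank $r - 1 \geq d$. Then $P \oplus S \cong S^2 \oplus P_1$, so putting $Q = S^2 \oplus P_1$ places us exactly in the setting of Theorem \ref{2} with the module $P_1$ of rank at least $d$. Applying that theorem with $I = S$ the unit ideal, for which $\Um(Q, S) = \Um(Q)$ and $\ETrans(Q, S) = \ETrans(Q)$, yields that $\ETrans(Q)$ acts transitively on $\Um(Q) = \Um(P \oplus S)$. By the preceding paragraph $P$ is therefore cancellative.

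For part (2) I would first record that $\dim(S) \geq d + 1$ for both $S = R[X]$ and $S = R[X, 1/X]$, so that the hypothesis $r \geq \dim(S)$ already forces $r \geq d + 1$. By Theorem \ref{1} the natural map $\Um(P) \to \Um(P/XP)$ (resp. $\Um(P) \to \Um(P/(X-1)P)$) is surjective, and the target is a projective module of rank $r \geq d + 1$ over the ring $R$ of dimension $d$; hence by Heitmann's splitting theorem it has a unimodular element, which lifts to a unimodular element of $P$. Thus the hypotheses of part (1) are met and $P$ is cancellative.

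The genuine work is already contained in Theorems \ref{1} and \ref{2}, so this deduction is essentially formal. The only points demanding care are the specialization $I = S$ in Theorem \ref{2}, where I must check that the relative transvection group and relative unimodular set collapse to their absolute counterparts so that transitivity genuinely transfers to $\Aut(P \oplus S)$, and the bookkeeping of ranks guaranteeing that the complement $P_1$ has rank at least $d$. I expect no serious obstacle beyond verifying $\dim(S) \geq d + 1$ in the Laurent case and assembling these standard facts in the correct order.
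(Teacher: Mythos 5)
Your proposal is correct and follows essentially the same route as the paper: the paper deduces part (1) from its Lemma \ref{mr1} (cancellation via transitivity of $\Aut(S\oplus P)$ on $\Um(S\oplus P)$) combined with Theorem \ref{ce18} applied to $S^2\oplus P_1$ after splitting off the given unimodular element, and part (2) from part (1) together with Theorems \ref{u15} and \ref{u13}, exactly as you do. The only point you spell out that the paper leaves implicit is the reduction from cancelling an arbitrary projective $Q$ to cancelling free summands one at a time, which is standard and unproblematic here since every intermediate module $S^k\oplus P$ still satisfies the rank hypothesis of Theorem \ref{ce18}.
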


It remains to see whether $P$ has a unimodular element if $r \geq d + 1$, $d$ being the dimension of $R$. As an application we have shown the following.

\begin{theorem}{\rm(Theorem \ref{mr3.5})}\label{3.5}
Let $R$ be a ring  of dimension $d$  of finite type over a Pr\"{u}fer domain and  $S = R[X]$ or $R[X, 1/X]$. Let $P$ be a finitely generated projective $S$-module of rank $r \geq d + 1$. Then the following holds.

\begin{itemize}
\item[1.] If $S = R[X]$, then the natural map $\Um(P) \rightarrow \Um(P/XP)$ is surjective.

\item[2.] If $S = R[X, 1/X]$, then the natural map $\Um(P) \rightarrow \Um(P/(X - 1)P)$ is surjective.
\end{itemize}
In particular $P$ has a unimodular element.  Moreover if $P'$ is another projective $S$-module of rank $r$ and $Q \oplus P \cong Q \oplus P'$ for some projective $S$-module $Q$, then $P \cong P'$.
\end{theorem}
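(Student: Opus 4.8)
The plan is to derive everything from the theorems already proved, the single new ingredient being a Krull-dimension computation that is exactly where the hypothesis on $R$ is consumed. The crucial observation is that the rank bound $r \geq d+1$ in the statement is weaker than the bound $r \geq \dim(S)$ demanded by Theorem~\ref{1}; so the first and main step is to show that for a ring $R$ of dimension $d$ which is of finite type over a Pr\"ufer domain one has $\dim(R[X]) = d+1$, whence $\dim(R[X,1/X]) \leq d+1$ as well, the latter being a localization of the former. I would obtain this from the dimension theory of Jaffard (equivalently, stably strong $S$-) rings: a Pr\"ufer domain is a Jaffard domain, its finite-type algebras remain (locally) Jaffard, and for such a ring the Krull dimension coincides with the valuative dimension, which always satisfies $\dim_v(R[X]) = \dim_v(R)+1$. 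Note that $R$ itself need not be a domain, but the stability of the relevant class under passage to finitely generated algebras, including quotients, covers this. The outcome is the chain $\dim(S) \leq d+1 \leq r$, precisely the inequality needed below.

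With $r \geq \dim(S)$ secured, the two displayed surjectivity statements are immediate: I would simply quote Theorem~\ref{1}, applied to $S = R[X]$ for part~1 and to $S = R[X,1/X]$ for part~2, to conclude that $\Um(P) \to \Um(P/XP)$, respectively $\Um(P) \to \Um(P/(X-1)P)$, is onto.

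For the existence of a unimodular element I would combine this surjectivity with Heitmann's non-noetherian version of Serre's splitting theorem \cite{heit}. The quotient $P/XP$, respectively $P/(X-1)P$, is a finitely generated projective module over $S/XS \cong R$, respectively $S/(X-1)S \cong R$, and it has rank $r \geq d+1 > \dim(R)$; hence by Heitmann's theorem it admits a unimodular element. By the surjectivity just established, any such element lifts to a member of $\Um(P)$, so $P$ has a unimodular element.

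Cancellation then follows formally from Theorem~\ref{3}: $P$ now carries a unimodular element and has rank $r \geq d+1$, so case~(1) of that theorem applies and $P$ is cancellative; in particular $Q \oplus P \cong Q \oplus P'$ forces $P \cong P'$. Thus the whole argument is an assembly of the earlier results around one arithmetic fact, and I expect the only genuine obstacle — and the only place the Pr\"ufer hypothesis is used — to be the dimension identity $\dim(R[X]) = d+1$ of the first step. Once that is in place, the weak-looking bound $r \geq d+1$ meets the hypotheses of both Theorem~\ref{1} and Theorem~\ref{3}, and the rest is automatic.
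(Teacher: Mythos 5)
Your proposal is correct and takes essentially the same route as the paper: the paper also reduces everything to Theorems \ref{u15}, \ref{u13} and \ref{mr2} after establishing $\dim(R[X]) = \dim(R[X,1/X]) = d+1$, which it obtains from the Malik--Mott result that finite-type algebras over Pr\"{u}fer domains are strong S-rings together with Kaplansky's theorem. Your Jaffard/valuative-dimension justification (which you yourself note is equivalent to the stably strong S-ring route) is just a different standard reference for that single arithmetic fact, and the rest of your assembly matches the paper's.
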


In section 2, we revisit the result of Heitmann in \cite{heit} giving a simpler algebraic proof (see Theorem \ref{P6}). We show that if $\Spec(R) = V(s) \sqcup D(s)$, each of $V(s)$ and $D(s)$ has dimension at most $d$, then any finitely generated projective $R$-module $P$ of rank $r \geq d + 1$ has a unimodular element and is also cancellative (Corollary \ref{P9}). In section 3 we introduce the important notion of transvections. We prove Theorem \ref{2} whose method leans heavily on the work of Roitman \cite{roit}. One of the key techniques to prove the corresponding result for noetherian rings in \cite{lind} is to find a nonzero divisor $s$ such that $P_s$ is free, which is not available in our case. We avoid this difficulty by introducing the notion of a Lindel pair motivated by the work of Lindel in \cite{lind}. In section 4 we prove Theorem \ref{1}. In final section 5,  the main result of this article is proved. We recall the notion of strong S-ring. A result of Malik and Mott in \cite{malik} enables us to apply our main result to prove Theorem \ref{3}. Our paper contains numerous questions which we hope, will stimulate interest in the near future. The reader who would like to follow the story further is encouraged to browse Lam's excellent book (\cite{lam}, Chapter VIII, \S 7).
   
{\bf All rings in this article are assumed to be commutative with unity $1 \not = 0$ not necessarily noetherian and all projective modules are finitely generated.}
\section {Preliminaries}
In this section we shall give a few definitions and prove some elementary lemmas to prove the main results in the later sections. As said earlier all rings are commutative and possibly even non-noetherian.
%
The following is proved in (\cite{clmr}, Theorem 8). We give here a short algebraic proof.

\begin{lemma}\label{P3}
Let $R$ be a ring and $I(a) = (a) + (\sqrt{0} : a)$. Then the following are equivalent.
\itemize
\item[(1)] 
$R$ has dimension at most $d$.
\item[(2)]

The quotient ring $R/I(a)$ has dimension at most $d-1$ for all $a \in R$.
\end{lemma}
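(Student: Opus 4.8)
The plan is to translate everything into chains of prime ideals and to use a description of $I(a)$ in terms of $\Spec R$. The first and crucial observation I would record is that $(\sqrt{0} : a) = \bigcap_{a \notin \mathfrak{q}} \mathfrak{q}$, the intersection being over all primes $\mathfrak{q}$ not containing $a$: indeed $r \in (\sqrt{0} : a)$ means $ra$ is nilpotent, i.e. $ra$ lies in every prime, which forces $r \in \mathfrak{q}$ whenever $a \notin \mathfrak{q}$, and conversely. Consequently a prime $\mathfrak{p}$ contains $I(a)$ if and only if $a \in \mathfrak{p}$ and $\mathfrak{p} \supseteq \bigcap_{a \notin \mathfrak{q}} \mathfrak{q}$; geometrically $V(I(a))$ is the boundary of $D(a)$. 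With this in hand, $\dim R/I(a)$ is the supremum of lengths of chains of primes all containing $I(a)$, and the whole lemma becomes a statement about how such chains embed into chains of $\Spec R$.

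For $(1) \Rightarrow (2)$, I would start with a chain $\mathfrak{p}_0 \subsetneq \cdots \subsetneq \mathfrak{p}_k$ of primes each containing $I(a)$ and show it can be extended downward by one prime. Since $\mathfrak{p}_0 \supseteq I(a)$ we have $a \in \mathfrak{p}_0$ and $(\sqrt{0} : a) \subseteq \mathfrak{p}_0$. The key step is to check that $a$ is not nilpotent in $R_{\mathfrak{p}_0}$: if $s a^n = 0$ for some $s \notin \mathfrak{p}_0$, then $(sa)^n = s^{n-1}(s a^n) = 0$, so $s \in (\sqrt{0} : a) \subseteq \mathfrak{p}_0$, a contradiction. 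Hence $a/1$ avoids some prime of $R_{\mathfrak{p}_0}$, which pulls back to a prime $\mathfrak{q} \subsetneq \mathfrak{p}_0$ with $a \notin \mathfrak{q}$. Then $\mathfrak{q} \subsetneq \mathfrak{p}_0 \subsetneq \cdots \subsetneq \mathfrak{p}_k$ is a chain of length $k+1$ in $\Spec R$, so $k + 1 \le d$, whence $\dim R/I(a) \le d-1$.

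For $(2) \Rightarrow (1)$, I would take an arbitrary chain $\mathfrak{p}_0 \subsetneq \cdots \subsetneq \mathfrak{p}_n$ in $\Spec R$ and manufacture a suitable $a$. Choosing $a \in \mathfrak{p}_1 \setminus \mathfrak{p}_0$, clearly $a \in \mathfrak{p}_i$ for all $i \ge 1$. Moreover if $r \in (\sqrt{0} : a)$ then $ra$ is nilpotent, hence $ra \in \mathfrak{p}_0$, and since $a \notin \mathfrak{p}_0$ primeness gives $r \in \mathfrak{p}_0 \subseteq \mathfrak{p}_1$. Thus $I(a) \subseteq \mathfrak{p}_1 \subseteq \cdots \subseteq \mathfrak{p}_n$, and $\mathfrak{p}_1 \subsetneq \cdots \subsetneq \mathfrak{p}_n$ is a chain of length $n-1$ in $\Spec R/I(a)$. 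Applying the hypothesis gives $n - 1 \le d-1$, i.e. $n \le d$, so $\dim R \le d$.

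The routine parts are the two nilpotency manipulations; the one genuine point requiring care is the localization argument in $(1)\Rightarrow(2)$, namely verifying that $a$ remains non-nilpotent after localizing at $\mathfrak{p}_0$, so that a prime strictly below $\mathfrak{p}_0$ and missing $a$ actually exists. Everything else is bookkeeping with chains, and no noetherian hypothesis is used anywhere.
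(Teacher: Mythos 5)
Your proof is correct and follows essentially the same route as the paper: for $(1)\Rightarrow(2)$ you show every prime containing $I(a)$ has height at least one via a localization/nilpotency argument (the paper phrases this as $I(a)$ not lying in any minimal prime), and for $(2)\Rightarrow(1)$ you make the identical choice $a \in \mathfrak{p}_1 \setminus \mathfrak{p}_0$ to force $I(a) \subseteq \mathfrak{p}_1$. The only difference is presentational, so nothing further is needed.
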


\begin{proof}
We first prove that (1) implies (2). It is enough to show that the height of $I(a)$ is at least one for for all $a \in R$. If possible assume that $I(a) \subset \mathfrak{p}$ for some minimal prime ideal $\mathfrak{p}$ of $R$. Now $(R_{\mathfrak{p}} , \mathfrak{p}R_{\mathfrak{p}})$ is a zero dimensional local ring 
and $I(a)R_{\mathfrak{p}} = (a) + (\mathfrak{p}R_{\mathfrak{p}} : a) \subset \mathfrak{p}R_{\mathfrak{p}}$. This is absurd as $a \in \mathfrak{p}R_{\mathfrak{p}}$ gives $(\mathfrak{p}R_{\mathfrak{p}} : a) = R_{\mathfrak{p}}$.

Now we assume that (2) holds. If possible let $\dim(R) \geq d + 1$ and $\mathfrak{p}_0 \subset \mathfrak{p}_1 \subset  \ldots \subset \mathfrak{p}_{d + 1}$ be an ascending chain of prime ideals of length $d + 1$. Choose $a \in \mathfrak{p}_1  - \mathfrak{p}_0$. Then $(\sqrt{0} : a) \subset \mathfrak{p_0}$. Therefore, $I(a) \subset \mathfrak{p}_1$ and $\dim (R / I(a)) \geq d$ which is a contradiction.
\end{proof}

\begin{definition}{\rm(Unimodular element, Order ideal)}\label{P1}
Let $P$ be a  projective module over a ring $R$. We shall call an element $p \in P$ unimodular if there exists  $\phi \in \Hom(P, R)$ such that $\phi(p) = 1$. In other words we have a submodule $Q$ of $P$ such that $P = Rp \oplus Q$. The set of unimodular elements in $P$ is denoted by $\Um(P)$. If $I$ is an ideal of $R$ and $P = R \oplus Q$, then  $\Um(P, I)$ denotes the set of all unimodular elements in $P$ which are $(1, 0)$ modulo $I$. For $p \in P$ we define $O(p) = \{\phi(p) : \phi \in \Hom(P, R)\}$ to be the order ideal of $p$. Clearly $p \in \Um(P)$ if and only if $O(p) = R$.
\end{definition}

\begin{definition}{\rm(Unimodular row, Elementary action)}\label{P4}
A row $(a_1, a_2, \ldots, a_n) \in R^{n}$ is said to be unimodular if there exists another row $(b_1, b_2, \ldots, b_n) \in R^{n}$ such that $\sum_{i =1}^{n}a_ib_i = 1$ i.e. $(a_1, a_2, \ldots, a_n) \in \Um(R^n)$. The set of unimodular rows of length $n$ is denoted by $\Um_{n}(R)$. We define $\Um_n(R, I)$ as the set $\{v \in \Um_n(R) : v \equiv e_1 \vpmod I\}$ for any ideal $I$ of $R$.

Any subgroup $G$ of $\GL_n(R, I) = \{ \alpha \in \GL_n(R) : \alpha \equiv I_n \vpmod I \}$ acts on $\Um_n(R, I)$ where $I_n$ denotes the identity matrix.  Let $v, w \in \Um_n(R, I)$, we write $v \sim_G w$ if $v = wg$ for some $g \in G$. Given $\lambda \in R$, for $i \neq j$, let $E_{ij}(\lambda) = I_n + \lambda{e_{ij}}$,  $e_{ij} \in \M_n(R)$ is the matrix  whose only nonzero entry is $1$ at the $(i,j)$-th position.  Such $E_{ij}(\lambda)$'s are called elementary matrices.  The subgroup of $\GL_n(R)$ generated by  $E_{ij}(\lambda), i \not= j, \lambda \in R$ is called the elementary subgroup of $\GL_n(R)$ and denoted  by $\E_n(R)$. Similarly $\E_n(I)$ is defined as the subgroup generated by $E_{ij}(\lambda), i \not= j, \lambda \in I$ for any ideal $I$ of $R$. We define $\E_n(R, I)$  to be the normal closure of $\E_n(I)$ in $\E_n(R)$. It is the smallest  normal subgroup of $\E_n(R)$ containing the element $E_{21}(x), x \in I$
\end{definition}

If $I$ is an ideal of $R$, then we define $V_R(I) = \{ \mathfrak{p} \in \Spec(R) ; I \subset \mathfrak{p}\}$. The following is a stronger version of (\cite{abed}, Proposition 2.3).

\begin{theorem}\label{P5}
Let $S$ be a multiplicative closed set in a ring $A$ and $R = S^{-1}A$  a ring of dimension at most $d$. Let $(a_0, a_1, \ldots, a_n) \in \Um_{n +1}(R), n \geq d + 1$. Then for any $s \in S$ there exists $c_1, c_2, \ldots, c_{d +1} \in sA$ such that  $(a_1 +  c_1a_0, a_2 + c_2a_0, \ldots, a_{d + 1} + c_{d + 1}a_0, a_{d + 2}, \ldots, a_n) \in \Um_{n}(R)$.   
\end{theorem}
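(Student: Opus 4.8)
The plan is to prove the statement by induction on $d = \dim R$, using the dimension-reduction Lemma \ref{P3} as the engine. The running theme is that, since $s \in S$ becomes a unit in $R = S^{-1}A$, the constraint $c_i \in sA$ should cost nothing geometrically and can be threaded through every step. First I would reformulate the goal: a row lies in $\Um_n(R)$ exactly when no prime of $R$ contains all its entries, and any prime $\mathfrak p$ with $a_0 \in \mathfrak p$ is automatically harmless, since then $a_i + c_i a_0 \in \mathfrak p$ forces $a_i \in \mathfrak p$, so all of $a_0, \dots, a_n$ would lie in $\mathfrak p$, contradicting $(a_0, \dots, a_n) \in \Um_{n+1}(R)$. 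Thus it suffices to kill the primes with $a_0 \notin \mathfrak p$ that contain the modified row. Clearing denominators (elements of $S$ are units in $R$), I may also assume each $a_i$ is the image of an element of $A$.

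For the inductive step I would set $a = a_{d+1}$ and pass to $\bar R = R/I(a)$, where $I(a) = (a) + (\sqrt 0 : a)$; by Lemma \ref{P3} one has $\dim \bar R \le d-1$. The structural point is that $\bar R$ is again of the required type: writing the ideal $I(a) = S^{-1}\mathfrak c$ for its contraction $\mathfrak c \subseteq A$, one gets $\bar R \cong \bar S^{-1}(A/\mathfrak c)$ with $\bar S, \bar s$ the images of $S, s$, and the image of $sA$ is exactly $\bar s\,(A/\mathfrak c)$. Since $\bar a = 0$ in $\bar R$, the row $(\bar a_0, \bar a_1, \dots, \bar a_d, \bar a_{d+2}, \dots, \bar a_n)$ is unimodular of length $n \ge (d-1)+2$, so the inductive hypothesis produces $c_1, \dots, c_d \in sA$ (lifting coefficients in $\bar s\,(A/\mathfrak c)$) such that, with $b_i = a_i + c_i a_0$, the ideal $K = (b_1, \dots, b_d, a_{d+2}, \dots, a_n)$ satisfies $K + I(a) = R$.

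It then remains to choose $c_{d+1} \in sA$ with $(K,\ a + c_{d+1}a_0) = R$, i.e.\ to collapse the pair $(\bar a_0, \bar a)$ in $D = R/K$. Global unimodularity gives $(\bar a_0, \bar a)D = D$, while $K + I(a) = R$ supplies $\sigma$ with $\sigma a$ nilpotent and $\sigma \equiv 1 \pmod{(K,a)}$; writing $1 - \sigma = k' + \rho a$ with $k' \in K$ shows $\bar\sigma = 1 - \bar\rho\bar a$, hence $\bar\sigma^2 = \bar\sigma$ in $D_{\mathrm{red}}$. The idempotent $\bar\sigma$ splits $D_{\mathrm{red}} = D_1 \times D_2$, with $\bar a = 0$ and $\bar a_0$ a unit on $D_1$ (where $\bar\sigma = 1$) and $\bar a$ a unit on $D_2$ (where $\bar\sigma = 0$); consequently $a + \sigma a_0$ is a unit on each factor, hence a unit in $D$, which is exactly the collapse we want with $c_{d+1} = \sigma$. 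The base case $d = 0$ is this same collapse performed directly, once Lemma \ref{P3} is read as $I(a) = R$ for every $a \in R$.

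The hard part will be carrying out this final collapse with $c_{d+1}$ lying in $sA$, rather than merely in some ideal of $R$: the witness $\sigma$ produced by comaximality is an arbitrary element of $R$, and one must instead realize it inside $s\cdot\mathrm{im}(A)$. I expect to handle this by keeping the whole construction $A$-rational and inserting the factor $s$ from the outset, which is legitimate precisely because $s$ is a unit in $R$, so multiplying any correcting direction by $s$ alters neither the nilpotence of $\sigma a$ nor the residue of $a + c_{d+1}a_0$ modulo the relevant primes; the coefficients $c_1, \dots, c_d$ already lie in $sA$ by construction, since the inductive hypothesis is applied to $\bar R = \bar S^{-1}(A/\mathfrak c)$ with the matching element $\bar s$, and its output lifts to $sA$. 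Threading $s$ cleanly through the last step, so that $c_{d+1} \in sA$ while the idempotent decomposition above still does its job, is the main obstacle and the one that genuinely uses the hypothesis $R = S^{-1}A$ rather than just $\dim R \le d$.
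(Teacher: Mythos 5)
Your proof is essentially the paper's: the same induction on $d$ driven by Lemma \ref{P3}, reducing modulo $I(a_{d+1})$, applying the inductive hypothesis to get $K + I(a_{d+1}) = R$, and then collapsing the pair $(a_0,a_{d+1})$ modulo $K$ by means of an element of $(\sqrt{0}:a_{d+1})$; your idempotent splitting of $(R/K)_{\mathrm{red}}$ is just a repackaging of the paper's decomposition $V_R(J+(a_{d+1}+sca_0)) = V_R(J+(a_{d+1},s))\cup V_R(J+(a_{d+1},c))\cup V_R(J+(a_{d+1},a_0))$, each piece of which is empty. The one step you leave open --- forcing $c_{d+1}$ into $sA$ --- is not a genuine obstacle, and your own parenthetical remark already contains the fix: clear denominators at the very outset (as the paper does) so that all the $a_i$, and hence the witness $\sigma\in(\sqrt{0}:a_{d+1})$ produced from $K+I(a_{d+1})\ni s'$, may be taken in the image of $A$, and then set $c_{d+1}=s\sigma$. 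Since $s$ is a unit of $R$, the element $s\sigma a_{d+1}$ is still nilpotent, the image of $s\sigma$ on your factor $D_1$ is still a unit and on $D_2$ is still zero, so $a_{d+1}+s\sigma a_0$ remains a unit in $R/K$. With that one line inserted the argument is complete and coincides with the paper's.
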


\begin{proof}
Multiplying $(a_0, a_1, \ldots, a_n)$ by suitable $s \in S$ , it is enough to assume that $(a_0, a_1, \ldots, a_n) \in A^n$. Also replacing $A$ by $A/ (a_{d +2}, a_{d +3}, \ldots, a_n)$ we may assume that $n = d + 1$. We prove the result by induction on $d$. 

Assume $d = 0, n =1$ i.e. $(a_0, a_1) \in \Um_2(R)$. Then by Lemma \ref{P3} we have $I(\frac{a_1}{1}) = I(a_1)R = R$. So $a_1b + c \in S$ for some $c \in (\sqrt{0} : a_1)$. We claim that $V_R(a_1 + sca_0) =V_R(a_1, s) \cup V_R(a_1, c) \cup V_R(a_1, a_0)$. If $a_1 + sca_0 \in \mathfrak{p}$ for some prime ideal $\mathfrak{p}$, then we also have $a_1 sca_0  \in \mathfrak{p}$. This means that both $a_1, sca_0 \in \mathfrak{p}$. Therefore, $\mathfrak{p}$ contains one of the ideals $(a_1, s), (a_1, c)$ and $(a_1, a_0)$. So we have $V_R(a_1 + sca_0) \subset V_R(a_1, s) \cup V_R(a_1, c) \cup V_R(a_1, a_0)$. The reverse inclusion is clear. We have $(a_1, s) = (a_1, c) = (a_1, a_0) = R$. Therefore, $V_R(a_1 + sca_0) = \emptyset$ i.e. $(a_1 + c_1a_0) \in \Um_2(R)$ for $c_1 = sc$.

 Let overline denote the reduction modulo $I(a_{d +1}) = (a_{d+1}) + (\sqrt{0} : a_{d+1})$. Then we have $\overline{R} = \overline{S}^{-1}\overline{A}$ and $\dim ( \overline{R}) \leq d - 1$ by Lemma \ref{P3}. So by the induction hypothesis, we have $c_1, c_2, \ldots, c_d \in sA$ such that $(\overline{a_1} +  \overline{c_1a_0}, \overline{a_2} + \overline{c_2a_0}, \ldots, \overline{a_{d }} + \overline{c_{d}a_0}) \in \Um_d(\overline{R})$. Let  $J = (a_1 +  c_1a_0, a_2 + c_2a_0, \ldots, a_{d} + c_{d}a_0)$. Then there exists $s' \in S$ such that $s' \in J + I(a_{d +1})$. So $s' \equiv a_{d+1}b + c\  \vpmod J$ for some $c \in (\sqrt{0} : a_{d +1})$. By argument similar to the case when $d = 0$, we have $V_R(J + (a_{d+1} + sca_0)) = V_R(J + (a_{d+1}, s)) \cup V_R(J + (a_{d+1},  c)) \cup V_R(J + (a_{d+1}, a_0))$. Clearly $J + (a_{d+1},  s) = R$. Also $J + (a_{d+1},  c) = R$ by the choice of $c$ and $J + (a_{d+1},  a_0) = (a_0, a_1, \ldots, a_{d + 1}) = R$.

Therefore, $V_R(J + (a_{d+1} + sca_0)) = \emptyset$ and $(a_1 +  c_1a_0, a_2 + c_2a_0, \ldots, a_{d + 1} + c_{d + 1}a_0) \in \Um_{d +1}(R)$ for $c_{d +1} = sc$.
\end{proof}

A weaker version of the  following when $R = S$ in the theorem below  was proved by Heitmann in (\cite{heit})
using the patch topology on the Spectrum of a ring. A careful study of his result gives the following short algebraic proof.

\begin{theorem}\label{P6}
Let $S$ be a multiplicative closed set in a ring $A$ and $R = S^{-1}A$  a ring of dimension at most $d$. Let $Q$ be a finitely presented $A$-module such that $P = S^{-1}Q$ is a projective $R$-module of rank $r \geq d + 1$. Suppose $(a, x) \in \Um(R \oplus P)$ for some $(a, x) \in A \oplus Q$ and $Q = Ax + Q' $ for some finitely generated $A$ submodule $Q'$ of $Q$. Then for any $s \in S$ we have $ y \in Q'$ such that $x + say \in \Um(P)$.
\end{theorem}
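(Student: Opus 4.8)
The plan is to recast the statement in terms of order ideals and then to run, on the module $P$ itself, the same induction on $\dim R$ that proves Theorem \ref{P5}, choosing every correction inside $Q'$ and certifying unimodularity by a $V_R$-decomposition lifted from the base case of that theorem.

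\textbf{Reformulation.} Unwinding $(a,x)\in\Um(R\oplus P)$ gives $b\in R$ and $\psi\in\Hom(P,R)$ with $ab+\psi(x)=1$; since $O(x)=\{\psi(x):\psi\in\Hom(P,R)\}$, this says precisely $(a)+O(x)=R$. Dually, $O(x+say)=\{\psi(x)+sa\,\psi(y):\psi\in\Hom(P,R)\}$, so the goal is to produce $y\in Q'$ with $O(x+say)=R$. Two structural facts drive everything: first, $Q=Ax+Q'$ forces $Rx+S^{-1}Q'=P$, which is what lets $Q'$ repair $x$ wherever it fails to be basic (this replaces the coordinatewise freedom used in Theorem \ref{P5}); second, $s\in S$ is a unit in $R=S^{-1}A$, so $V_R(s)=\emptyset$. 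I would also record that any admissible correction leaves $x+say\equiv x\pmod a$, so the hypothesis $(a)+O(x+say)=R$ is preserved for free.

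\textbf{Induction on $d=\dim R$.} Fix $\psi_1\in\Hom(P,R)$ and set $\theta=\psi_1(x)$. Reducing modulo $I(\theta)$, Lemma \ref{P3} gives $\dim\overline R\le d-1$, while $\overline P$ stays projective of rank $r\ge d\ge (d-1)+1$ and $(\overline a)+O(\overline x)=\overline R$; the inductive hypothesis then yields $\overline{y_1}\in\overline{Q'}$ with $\overline x+\overline{sa}\,\overline{y_1}$ unimodular. Lifting $\overline{y_1}$ to $y_1\in Q'$ and putting $x_1=x+say_1$, one reaches $O(x_1)+I(\theta)=R$, and after arranging that the distinguished value is read off at $x_1$ (the analogue of keeping the last coordinate $a_{d+1}$ untouched through the induction in Theorem \ref{P5}), the relation collapses, because $\psi_1(x_1)\in O(x_1)$, to
\[
1=o+c,\qquad o\in O(x_1),\ c\in(\sqrt{0}:\theta)\cap A .
\]
The base case $d=0$ is the case where $I(\theta)=R$ for every $\theta$, so the reduction is vacuous and only the final step below is needed.

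\textbf{Final elementary correction.} With $J=O(x_1)$ and $\theta=\psi_1(x_1)$, I would select a single $z\in Q'$ that is basic on the closed locus $V_R(J)$, set $x_2=x_1+sac\,z$, and aim to establish, exactly as in the base-case computation of Theorem \ref{P5}, an inclusion of the shape
\[
V_R\bigl(O(x_2)\bigr)\subseteq V_R\bigl(J+(\theta,s)\bigr)\cup V_R\bigl(J+(\theta,c)\bigr)\cup V_R\bigl(J+(\theta,a)\bigr),
\]
the mechanism being that $\theta c\in\sqrt{0}$ forces $\theta\in\mathfrak p$ whenever $\psi_1(x_2)\in\mathfrak p$, just as $a_1$ was forced into $\mathfrak p$ there. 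Each set on the right is empty: $V_R(\,\cdot\,,s)=\emptyset$ since $s$ is a unit, while $(a)+J=R$ and $(c)+J=R$ by $1=o+c$. Hence $O(x_2)=R$, and $x_2=x+sa(y_1+cz)$ exhibits the required unimodular element with $y=y_1+cz\in Q'$.

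\textbf{Main obstacle.} The genuinely new point, absent in the row setting of Theorem \ref{P5}, is producing \emph{one} element $z\in Q'$ that simultaneously makes $x_1$ basic at every point of $V_R(O(x_1))$: in the free case each coordinate is corrected independently, whereas here a single $y$ must act correctly against all functionals at once. This is where the identity $P=Rx+S^{-1}Q'$, the rank bound $r\ge d+1$, and a prime/dimension-avoidance argument on the low-dimensional set $V_R(O(x_1))$ have to be combined, and it is also where the naive functional-by-functional transcription of Theorem \ref{P5} breaks down, so the order ideal must be handled globally. Threaded through all of this is the bookkeeping that keeps the distinguished value $\psi_1(\,\cdot\,)$ under control across the induction and the denominators $s$, the factor $a$, and the colon ideal $(\sqrt{0}:\theta)$ at the level of $A$; aligning these with the precise form $x+say$, $y\in Q'$, is the delicate part of the argument.
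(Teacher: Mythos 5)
There is a genuine gap, and you have in fact located it yourself: the existence of a single $z\in Q'$ that repairs $x_1$ at every point of $V_R(O(x_1))$ at once is the entire content of the theorem beyond the free case, and your proposal names it as the "main obstacle" without supplying a mechanism. Worse, the final elementary correction as written does not close even granting such a $z$: the element $c$ is taken in $(\sqrt{0}:\theta)$ for the single value $\theta=\psi_1(x_1)$, so for a prime $\mathfrak p\supseteq O(x_2)$ the product trick of Theorem \ref{P5} only forces $\psi_1(x_1)\in\mathfrak p$; for an arbitrary $\psi\in\Hom(P,R)$ the product $\psi(x_1)\cdot sac\,\psi(z)$ need not be nilpotent, so you cannot conclude $\psi(x_1)\in\mathfrak p$, hence cannot place $\mathfrak p$ over $J=O(x_1)$ and the claimed inclusion of $V_R(O(x_2))$ into the three comaximal loci fails. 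The same difficulty infects the inductive step: a correction $x\mapsto x+say_1$ changes $\psi_1(x)$, so there is no analogue of "keeping the last coordinate untouched," and the colon-ideal device of Lemma \ref{P3} is intrinsically tied to one ring element, not to a whole order ideal.

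The paper resolves this by a completely different device that your proposal does not contain. Writing $Q'=Ax_1+\cdots+Ax_n$, one observes that at any prime $\mathfrak p\supseteq O(x)$ the image of $x$ in the fiber $P\otimes_R R_{\mathfrak pR}/\mathfrak pR_{\mathfrak pR}$ vanishes, so the $x_i$ generate the fiber and, by finite presentation, $P_f$ is free on a subset of $\{x_1,\dots,x_n\}$ for some $f\notin\mathfrak p$. Quasi-compactness of $V_R(O(x)R)$ yields finitely many such $f_1,\dots,f_n$ with $O(x)R+(f_1,\dots,f_n)R=R$. On each $D(f_i)$ the module is free on generators of $Q'$, so Theorem \ref{P5} applies verbatim to produce $u_i\in Q'$ with $x+af_iu_i\in\Um(P_{f_i})$; since the correction is divisible by $f_i$ one gets $f_i\in\sqrt{O(x+af_iu_i)R}$ while the previously achieved comaximality is preserved, and iterating over $i$ gives $O(x+ay)R=R$ with $y=\sum f_iu_i\in Q'$. (The reduction to $s=1$ by replacing $a$ with $sa$ you do have.) Your plan would need to be replaced by, or supplemented with, this local-freeness-and-covering argument; the induction on $\dim R$ belongs only inside the free case.
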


\begin{proof}
We see that $(a, x) \in  \Um(R \oplus P)$ gives $(sa, x) \in  \Um(R \oplus P)$. Therefore,  given  $(a, x) \in \Um(R \oplus P)$ we only need to find  $y \in Q'$ such that $x + ay \in \Um(P)$.

Let $Q'$ be generated by  $x_1, x_2, \ldots, x_n$. We recall $O(x) = \{\phi(x) : \phi \in \Hom(Q, A)\}$. Since $Q$ is finitely presented, $O(x)$ commutes with localization i.e. $O(x/1) = O(x)R$. If $O(x) \cap S \not= \emptyset$ then $x \in \Um(P)$. So we choose $y = 0$ and we are done. 

We now assume that $O(x) \cap S = \emptyset$. Let $\mathfrak{p}$ be any prime ideal of $A$ containing $O(x)$ and $\mathfrak{p} \cap S = \emptyset$. Note that $P$ is generated by $x, x_1, \ldots, x_n$ as an $R$-module and the image of $x$ in  $P \otimes_R R_{\mathfrak{p}R}/ \mathfrak{p}R_{\mathfrak{p}R}$ is zero. So $P \otimes_R R_{\mathfrak{p}R}/ \mathfrak{p}R_{\mathfrak{p}R}$ is a vector space generated by $x_1, x_2, \ldots, x_n$. We choose $f \in A - \mathfrak{p}$ such that $P_{f}$ is a free $R_{f}$-module generated by a subset of $\{x_1, x_2, \ldots, x_n\}$ as basis. So we can cover the Zariski open set $V_R(O(x)R)$ by a finite number of basic open sets $D(f_i/1), f_i \in A$ such that on each $D(f_i/1)$ a subset of $\{x_1, x_2, \ldots, x_n\}$ will generate $P_{f_i}$ as a basis. Since $V_R(O(x)R) = \cup_{i =1}^n D(f_i/1)$, we have $$O(x)R + (f_1, f_2, \ldots, f_n)R = R.$$
Now $P_{f_1}$ is free on a subset of $\{x_1, x_2, \ldots, x_n\}$ and  $(a, x) \in \Um(R_{f_1} \oplus P_{f_1})$. So by Theorem \ref{P5} we have $u_1 \in Q'$ such that $x + af_1u_1\in \Um(P_{f_1})$. Therefore, $f_1 \in \sqrt{O(x + af_1u_1)R}$ which gives $O(x)R \subset \sqrt{O(x + af_1u_1)R}$. Thus we have
$$O(x + af_1u_1)R + (f_2, \ldots, \\f_n)R = R.$$
Repeating the above argument we have $u_2, u_3, \ldots, u_n \in Q'$ such that $O(x + af_1u_1 + af_2u_2 + \ldots + af_nu_n)R = R$. If $y = f_1u_1 + f_2u_2 + \ldots f_n u_n$ then $y \in Q'$ and $x + ay \in \Um(P)$.
\end{proof}

\begin{corollary}\label{P8}
Let $\Spec(R) = V(s) \sqcup D(s), s \in R$ be such that both $V(s)$ and $D(s)$ have dimension at most $d$. Let $P = Rp + P'$ be a projective $R$-module of rank $r \geq d +1$ such that $(a, p) \in \Um(R \oplus P)$. Then there exists $q \in P'$ such that $p + aq \in \Um(P)$.
\end{corollary}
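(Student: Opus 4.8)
The plan is to split the problem along the partition $\Spec(R) = V(s) \sqcup D(s)$ and to apply Theorem \ref{P6} once to each piece. The two relevant rings are $R/sR$, whose spectrum is $V(s)$ and hence has dimension $\le d$, and the localization $R_s = S^{-1}R$ with $S = \{1, s, s^2, \dots\}$, whose spectrum is $D(s)$ and hence also has dimension $\le d$. In both cases $\operatorname{rank}(P) = r \ge d + 1$ exceeds the dimension bound, so Theorem \ref{P6} applies. Since $P$ is finitely generated and $Rp + P' = P$, we may shrink $P'$ and assume it finitely generated. Throughout I will use that for a finitely generated projective module the order ideal commutes with base change, so $O(\bar z) = \overline{O(z)}$ under the quotient $R \to R/sR$ and $O(z)R_s = O(z/1)$ under $R \to R_s$ (the latter is recorded inside the proof of Theorem \ref{P6}).

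First I would handle the closed piece $V(s)$. Reducing modulo $sR$, the hypotheses give $(\bar a, \bar p) \in \Um(\bar R \oplus \bar P)$ with $\bar P = \bar R\,\bar p + \overline{P'}$, where $\bar R = R/sR$ and $\bar P = P/sP$. Applying Theorem \ref{P6} to $\bar R$ (taking the trivial multiplicative set, i.e. the distinguished element $1$) produces $q_2 \in P'$ with $p + a q_2 \in \Um(P/sP)$, equivalently $O(p + a q_2) + sR = R$. I then replace $p$ by $p_1 := p + a q_2$. The transvection $(c, z) \mapsto (c, z + c q_2)$ carries $(a, p)$ to $(a, p_1)$, so $(a, p_1) \in \Um(R \oplus P)$; and since $a q_2 \in P'$ we still have $P = R p_1 + P'$. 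Thus all the hypotheses survive, while now $O(p_1) + sR = R$.

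Next I would handle the open piece $D(s)$, and this is the one delicate point. I apply Theorem \ref{P6} to $R_s = S^{-1}R$ with $x = p_1$, but I exploit the freedom to take the distinguished element of $S$ to be $s$ itself: this yields $y \in P'$ with $p_1 + s a y \in \Um(P_s)$. Writing $q_3 := s y \in sP' \subseteq P'$, I obtain $s^N \in O(p_1 + a q_3)$ for some $N$. The crucial gain from $q_3$ being a multiple of $s$ is that $p_1 + a q_3 \equiv p_1 \pmod{sP}$, so the correction is invisible modulo $sR$ and the work done over $V(s)$ is untouched: $O(p_1 + a q_3) + sR = O(p_1) + sR = R$.

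It remains to patch. Setting $J := O(p_1 + a q_3)$, I have simultaneously $s^N \in J$ and $J + sR = R$. From $J + sR = R$ one gets $J + s^k R = R$ for all $k$ (since $(J + sR)^k \subseteq J + s^k R$), and with $k = N$ together with $s^N R \subseteq J$ this forces $J = R$. Hence $p_1 + a q_3 = p + a(q_2 + q_3) \in \Um(P)$, and $q := q_2 + q_3 \in P'$ is the element sought. The main obstacle is exactly the interference between the two corrections: a naive correction for $D(s)$ would in general destroy the unimodularity already achieved over $V(s)$. The resolution — treating $V(s)$ first and then correcting for $D(s)$ only by a multiple of $s$ — is precisely what the ``for any $s \in S$'' clause of Theorem \ref{P6} is designed to furnish.
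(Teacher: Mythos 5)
Your proof is correct and follows essentially the same route as the paper: first apply Theorem \ref{P6} over $R/sR$ to make $p$ unimodular modulo $s$, then apply it over $R_s$ with distinguished element $s$ so that the second correction lies in $sP'$ and does not disturb the first, and finally combine $O(p+aq)+sR=R$ with $s^N\in O(p+aq)$ to conclude. Your write-up merely makes explicit the details (shrinking $P'$ to a finitely generated submodule, the transvection argument preserving unimodularity of $(a,p_1)$, and the ideal-theoretic patching) that the paper leaves implicit.
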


\begin{proof} 
By Theorem \ref{P6} we have $q_1 \in P'$ such that $p + aq_1$ is a unimodular element modulo $s$. Now $(a,  p + aq_1) \in \Um(P_s)$. So by another application of Theorem \ref{P6} we have $q_2 \in P'$ such that $p + aq_1 + saq_2 \in \Um(P_s)$. Clearly $p + a(q_1 + sq_2) \in \Um(P)$ as it is unimodular modulo $s$ as well as in the localization at $s$.
\end{proof}

The following is a stronger version of (\cite{heit}, Corollaries 2.6, 2.7) stated in terms of Krull dimension.

\begin{corollary}\label{P9}
 Let $\Spec(R) = V(s) \sqcup D(s), s \in R$ such that both $V(s)$ and $D(s)$ have dimension at most $d$ and $P = Rp + P_1$ a projective module of rank $r \geq d + 1$. Then $P$ has a unimodular element of the form $p + q$ for some $q \in P_1$. Moreover if $P \oplus Q \cong P' \oplus Q$ for some projective modules $P', Q$, then $P \cong P'$.
\end{corollary}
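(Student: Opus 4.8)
The first assertion costs essentially nothing beyond Corollary \ref{P8}. The plan is to apply that corollary with the scalar $a = 1$: since the projection $R \oplus P \to R$ sends $(1, p)$ to $1$, we have $(1, p) \in \Um(R \oplus P)$, and the hypothesis $P = Rp + P_1$ is exactly the decomposition required (with the submodule $P'$ there taken to be $P_1$). Corollary \ref{P8} then produces $q \in P_1$ with $p + 1 \cdot q = p + q \in \Um(P)$, which is the desired unimodular element of the prescribed shape.

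The cancellation assertion is the substantial part, and I would prove it by the classical Bass route: reduce to cancelling a single free rank-one summand, and establish that suitable automorphisms act transitively on the unimodular elements of $R \oplus M$ for any projective $M$ of rank $\geq d+1$. First, since $Q$ is a finitely generated projective module it is a direct summand of some $R^n$, say $Q \oplus Q_1 \cong R^n$; adding $Q_1$ to both sides of $P \oplus Q \cong P' \oplus Q$ gives $P \oplus R^n \cong P' \oplus R^n$. Here $P'$ automatically has rank $r$ as well, since rank is locally constant and must agree on the two sides. It then suffices to prove the one-step statement: if $A$ is projective of the form $P \oplus R^k$ (hence of rank $r + k \geq d + 1$), $B$ is projective, and $A \oplus R \cong B \oplus R$, then $A \cong B$. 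Peeling the free summands off one at a time produces $P \cong P'$.

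The heart of the matter is the following transitivity claim, for which Corollary \ref{P8} is precisely the needed input: for $M$ projective over $R$ of rank $\geq d + 1$, every $(a, m) \in \Um(R \oplus M)$ can be carried to the base point $(1, 0)$ by a composition of the automorphisms $L_v \colon (b, x) \mapsto (b, x + b v)$ for $v \in M$ and $U_\mu \colon (b, x) \mapsto (b + \mu(x), x)$ for $\mu \in \Hom(M, R)$. Indeed, applying Corollary \ref{P8} to $(a, m)$ (with the auxiliary submodule taken to be all of $M$) yields $q \in M$ with $m + a q \in \Um(M)$, and $L_q$ sends $(a, m)$ to $(a, m + aq)$; so I may assume the $M$-component is itself unimodular, say $\phi(m) = 1$ for some $\phi \in \Hom(M, R)$. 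Then $U_{(1 - a)\phi}$ sends $(a, m)$ to $(1, m)$, and $L_{-m}$ sends $(1, m)$ to $(1, 0)$. Since the relation generated by these automorphisms is an equivalence relation, every unimodular element of $R \oplus M$ lies in the single orbit of $(1, 0)$, which is the transitivity sought.

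Granting transitivity, the one-step cancellation is the standard quotient argument. Given an isomorphism $\Phi \colon B \oplus R \to A \oplus R$, set $u = \Phi(0, 1) \in \Um(A \oplus R)$; then $\Phi$ induces $B \cong (B \oplus R)/R(0,1) \cong (A \oplus R)/R u$, whereas $(A \oplus R)/R(0,1) \cong A$. Because $A$ has rank $\geq d + 1$, the transitivity claim provides an automorphism of $A \oplus R$ carrying $u$ to $(0, 1)$, and this automorphism descends to an isomorphism $(A \oplus R)/R u \cong (A \oplus R)/R(0,1)$, giving $A \cong B$. The main obstacle is therefore concentrated entirely in the transitivity claim: once Corollary \ref{P8} is available, it is exactly what lets me move the $M$-component of an arbitrary unimodular element into a unimodular position, after which only the two elementary manipulations $U_{(1-a)\phi}$ and $L_{-m}$ remain, and both the reduction to a single free summand and the final quotient argument are routine.
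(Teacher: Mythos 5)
Your argument is correct and is exactly the route the paper intends (it leaves this corollary unproved): the splitting part is Corollary \ref{P8} with $a=1$, and the cancellation part is the standard Bass argument via transitivity of elementary transvections on $\Um(R\oplus M)$, which the paper itself records as Theorem \ref{ce2} and Lemma \ref{mr1}.
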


\begin{corollary}\label{P9e1}
Let $R$ be a zero dimensional ring. Then any projective module over $R$ is free.
\end{corollary}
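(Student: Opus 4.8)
The plan is to deduce the statement from Corollary \ref{P9} by induction on the rank, the point being that for a zero-dimensional ring the hypotheses of that corollary come for free. First I would note that for \emph{any} $s \in R$ the decomposition $\Spec(R) = V(s) \sqcup D(s)$ holds tautologically as a disjoint union of a closed and an open set, and that $\dim V(s) \le \dim R = 0$ while $\dim D(s) = \dim R_s \le \dim R = 0$, so both pieces have dimension at most $d = 0$. Hence Corollary \ref{P9} applies with $d = 0$ to every projective $R$-module $P$ of rank $r \ge d + 1 = 1$, and tells us that such a $P$ has a unimodular element; one may simply take $s = 1$, so that $V(s) = \emptyset$.

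Next I would run the induction on the (constant) rank $r$ of $P$. If $r = 0$, then $P_{\mathfrak p} = 0$ for every prime $\mathfrak p$, so $P = 0 = R^0$ is free. Suppose $r \ge 1$ and that the claim holds in rank $r - 1$. Choose a finite generating set $p, p_1, \ldots, p_n$ of $P$ and set $P_1 = Rp_1 + \cdots + Rp_n$, so that $P = Rp + P_1$. By Corollary \ref{P9} there is $q \in P_1$ with $u = p + q \in \Um(P)$, i.e. some $\phi \in \Hom(P, R)$ satisfies $\phi(u) = 1$; consequently $P = Ru \oplus \ker(\phi)$ with $Ru \cong R$ and $P' := \ker(\phi)$ projective of rank $r - 1$. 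The inductive hypothesis gives $P' \cong R^{r-1}$, whence $P \cong R \oplus R^{r-1} = R^r$. Note that only the splitting produced by the unimodular element is needed here, not the cancellation half of Corollary \ref{P9}.

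The one step that requires care, and which I expect to be the main (if minor) obstacle, is the meaning of ``rank''. A finitely generated projective module over a disconnected zero-dimensional ring need not have constant rank --- for instance $k \times 0$ over $k \times k$ is projective but visibly not free --- so the statement must be read for modules of constant rank $r$, which is precisely the setting in which the rank bookkeeping of the induction is meaningful. In general the rank function $\mathfrak p \mapsto \dim_{\kappa(\mathfrak p)} P \otimes_R \kappa(\mathfrak p)$ is locally constant on $\Spec(R)$ and takes finitely many values, and $P$ is free exactly when these values coincide; the idempotents cutting out its level sets split $R$ and $P$ into finitely many factors on each of which $P$ has constant rank, and the argument above then exhibits each factor as free of the corresponding rank. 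I would therefore state and prove the corollary for projective modules of constant rank, where the induction applies verbatim.
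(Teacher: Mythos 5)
Your proof is correct and follows exactly the route the paper intends: the paper offers no explicit argument for this corollary, but it is clearly meant to follow from Corollary \ref{P9} with $d=0$ (every projective module of positive rank splits off a free rank-one summand) together with induction on the rank. Your caveat about constant rank is well taken --- as literally stated the corollary fails for, say, $P = k\times 0$ over $R = k\times k$ --- and your reading of ``rank'' as constant rank, with the idempotent decomposition handling the general case, is the right repair and consistent with the paper's conventions elsewhere.
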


For any $f \in R[X, 1/X]$, we denote by $hc(f)$ and $lc(f)$ the coefficients of the highest degree and the lowest degree terms in $X$ respectively. A polynomial $f \in R[X]$ is said to be monic if $hc(f) = 1$. If $g \in R[X, 1/X]$ then $g$ is said to be bimonic if $hc(g) = lc(g) = 1$. The following is easy. See (\cite{lam}, Chapter III, Lemma 1.1) and (\cite{abed}, Proposition 3.3).

\begin{lemma}\label{u1}
Let $S = R[X]$ or $R[X, 1/X]$ and $I, J$  ideals of $S, R$ respectively such that $I + J S = S$. Then the following hold.
\begin{itemize}
\item[1.]
If $S = R[X]$ and $I$ contains a monic polynomial then $I \cap R + J = R$.
\item[2.]
If $S = R[X, 1/X]$ and $I$ contains a bimonic polynomial then $I \cap R + J = R$.
\end{itemize}
\end{lemma}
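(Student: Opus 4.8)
The plan is to handle both parts at once by a single application of Nakayama's lemma (in its determinant-trick form). The key observation is that the hypothesis on $I$---that it contains a monic polynomial when $S = R[X]$, respectively a bimonic polynomial when $S = R[X,1/X]$---forces the quotient $S/I$ to be a finitely generated $R$-module. Once this is known, the comaximality $I + JS = S$ becomes a statement purely about this module, and Nakayama produces the required element of $I \cap R$.

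First I would prove that $M := S/I$ is finitely generated over $R$. When $S = R[X]$ and $p \in I$ is monic of degree $n$, the inclusion $(p) \subseteq I$ makes $M$ a quotient of $R[X]/(p)$, which is free over $R$ on $1, X, \dots, X^{n-1}$ by division with remainder; hence $M$ is finitely generated over $R$. When $S = R[X,1/X]$ and $g \in I$ is bimonic with lowest term $X^a$ and highest term $X^b$ (so $lc(g) = hc(g) = 1$), the relation $g = 0$ in $S/(g)$ lets me solve for $X^b$ in terms of $X^a, \dots, X^{b-1}$ using $hc(g) = 1$, and for $X^{a-1}$ in terms of $X^a, \dots, X^{b-1}$ using $lc(g) = 1$ together with the invertibility of $X$. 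An induction upward and downward then shows that $X^a, \dots, X^{b-1}$ generate $S/(g)$ as an $R$-module, so once again $M$ is a finitely generated quotient.

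Next I would translate the hypothesis into this module. The image of the ideal $JS$ under the surjection $S \to M$ is precisely the submodule $JM$, and $I + JS = S$ says this image is all of $M$; thus $JM = M$. Applying the determinant form of Nakayama's lemma to the finitely generated module $M$ satisfying $JM = M$, I obtain some $j \in J$ with $(1 - j)M = 0$. Evaluating at the class of $1 \in S$ gives $(1-j)\cdot \overline{1} = 0$ in $S/I$, i.e. $1 - j \in I$; and since $j \in J \subseteq R$ we have $1 - j \in R$, whence $1 - j \in I \cap R$. Therefore $1 = (1-j) + j \in (I \cap R) + J$, which gives $(I \cap R) + J = R$, as claimed.

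The argument is uniform across the two cases; the sole divergence is the finite-generation step, and that is exactly where the monic (respectively bimonic) hypothesis enters. I do not expect a genuine obstacle here. The only point demanding care is the Laurent case, where one must reduce powers of $X$ in \emph{both} directions, which is precisely why bimonicity---rather than mere monicity---is the right hypothesis; the remainder is the routine Nakayama computation.
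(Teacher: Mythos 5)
Your proof is correct and complete. The paper itself gives no argument for this lemma (it simply labels it ``easy'' and cites Lam, Ch.~III, Lemma~1.1 and Abedelfatah, Prop.~3.3), and your Nakayama/determinant-trick argument --- showing $S/I$ is a finitely generated $R$-module via the monic (resp.\ bimonic) element and then extracting $1-j \in I \cap R$ from $J\cdot(S/I) = S/I$ --- is exactly the standard proof underlying those references, with the bimonic case handled correctly by reducing powers of $X$ in both directions.
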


The following follows from (\cite{kap}, Chapter 1, Section 5, Exercise 3).

\begin{lemma}\label{u2}
Let $S = R[X]$ or $R[X, 1/X]$. Let $P \in \Spec(R)$ be of finite height and $Q \in \Spec(S)$  such that $P = Q \cap R$. Then $$\height(Q) - \height(PS) \leq 1.$$
\end{lemma}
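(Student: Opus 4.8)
The plan is to reduce the Laurent case to the polynomial case and then prove the polynomial case by a descending induction on chains, the engine being that the fibre of $\Spec(S)\to\Spec(R)$ over $P$ is one-dimensional.

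First I would dispose of $S=R[X,1/X]$ by viewing it as the localization $R[X][\{X^n\}^{-1}]$. A prime $Q$ of $R[X,1/X]$ is the extension of a unique prime $\tilde Q$ of $R[X]$ with $X\notin\tilde Q$, and since every prime contained in $\tilde Q$ also omits $X$, localization changes no relevant height: $\height_{R[X,1/X]}Q=\height_{R[X]}\tilde Q$ and $\tilde Q\cap R=P$. Moreover $X$ is a nonzerodivisor modulo $PR[X]$ (because $R[X]/PR[X]=(R/P)[X]$ is a domain), so $PR[X,1/X]$ contracts to exactly $PR[X]$ and $\height_{R[X,1/X]}(PS)=\height_{R[X]}(PR[X])$. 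Hence it suffices to prove $\height_{R[X]}Q\le\height_{R[X]}(PR[X])+1$, and I write $S=R[X]$ from now on.

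The key structural input is that $S\otimes_R\kappa(P)\cong\kappa(P)[X]$, where $\kappa(P)=R_P/PR_P$, has Krull dimension $1$; equivalently, any chain of primes of $S$ all contracting to $P$ has length at most $1$. Since $PS\subseteq Q$, I would prove the sharper combinatorial statement: for every finite chain $Q=Q_h\supsetneq\cdots\supsetneq Q_0$ in $S$ there is a chain below $PS$ of length at least $h-1$. Taking the supremum over chains then yields $\height Q\le\height(PS)+1$ (this phrasing sidesteps any finiteness worry, while the hypothesis that $P$ has finite height keeps the two quantities comparable). I argue by induction on $h$. If $Q=PS$ the claim is trivial, so assume $PS\subsetneq Q$, i.e. $Q$ is an ``upper''. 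Look at $Q_{h-1}$. If $Q_{h-1}\supseteq PS$, then $Q_{h-1}$ is squeezed between $PS$ and $Q$, hence contracts to $P$ and lies in the one-dimensional fibre; as $Q$ is a closed point and $PS$ the generic point of that fibre, there is nothing strictly between them, forcing $Q_{h-1}=PS$, and then $PS=Q_{h-1}\supsetneq\cdots\supsetneq Q_0$ is the desired chain of length $h-1$.

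The only real obstacle is the complementary case $Q_{h-1}\not\supseteq PS$, where the chain below $Q$ ``escapes'' the fibre immediately and one must take care not to lose two levels at once. Here I would pick $c\in P\setminus Q_{h-1}$ (possible precisely because $PS\not\subseteq Q_{h-1}$ means some constant of $P$ lies outside $Q_{h-1}$) and invert it. Every $Q_i$ with $i\le h-1$ avoids $c$, so $Q_{h-1}R_c[X]\supsetneq\cdots\supsetneq Q_0R_c[X]$ is a chain of length $h-1$ in $R_c[X]$ lying over the strictly smaller prime $P'=Q_{h-1}\cap R\subsetneq P$. By the induction hypothesis applied over the base ring $R_c$ there is a chain below $P'R_c[X]$ of length at least $h-2$; contracting it along $R[X]\to R_c[X]$ gives a chain below $P'R[X]$ of length at least $h-2$, and since $P'\subsetneq P$ forces $P'R[X]\subsetneq PS$, prepending $PS$ produces a chain below $PS$ of length at least $h-1$. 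This closes the induction. I expect this localization step — decoupling $Q$ from the tail of the chain by inverting a constant of $P$ that the tail avoids — to be the crux; everything else is the routine verification that chains, contractions and extensions behave well under the localizations $R\to R_c$ and $R[X]\to R[X,1/X]$.
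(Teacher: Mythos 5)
Your proof is correct and complete. Note, however, that the paper does not actually prove this lemma: it simply records that the statement ``follows from (Kaplansky, Chapter~1, Section~5, Exercise~3)'', so there is no argument in the paper to compare yours against line by line. What you have written is essentially a full, self-contained proof of that cited exercise. Your two reductions are both sound: the Laurent case does reduce to the polynomial case because $R[X,1/X]=R[X]_X$ and every prime contained in a prime of $R[X]$ omitting $X$ again omits $X$, so all relevant heights are preserved; and in the polynomial case the only delicate point is exactly the one you isolate, namely a maximal chain under $Q$ need not pass through $PS$, so the one-dimensionality of the fibre $\kappa(P)[X]$ alone does not immediately give $\height(Q)\leq\height(PS)+1$. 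Your device for that case --- choosing $c\in P\setminus Q_{h-1}$, inverting it so that the tail of the chain lives over the strictly smaller prime $P'=Q_{h-1}\cap R$, applying the inductive statement over the base $R_c$, contracting back, and prepending $PS\supsetneq P'R[X]$ --- is exactly the standard mechanism behind Kaplansky's Theorem~37/39 circle of results, and the bookkeeping (that $P'R_c[X]\cap R[X]=P'R[X]$ since $c\notin P'$, and that strict inclusions survive contraction from a localization) is all in order. So the proposal adds value relative to the paper by making the citation self-contained; it does not diverge from, because it cannot be compared with, a proof the paper never gives.
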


%
%

\begin{lemma}\label{u3}
Let $R$ be a ring of finite dimension and $S = R[X]$ or $R[X, 1/X]$. Let $\mathfrak{m}$ be a maximal ideal of $S$ such that $\height (\mathfrak{m}) = \dim(S)$. Then $\mathfrak{n} = \mathfrak{m} \cap R$ is a maximal ideal of $R$.
\end{lemma}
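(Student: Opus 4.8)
The plan is to pin down $\height(\mathfrak{n})$ for $\mathfrak{n}=\mathfrak{m}\cap R$ and show it is forced to equal $\dim(R)$, which makes $\mathfrak{n}$ maximal. Write $d=\dim(R)$. Since $R$ has finite dimension, $\mathfrak{n}$ has finite height, so Lemma \ref{u2} applies to the pair $\mathfrak{n}\subseteq\mathfrak{m}$. The only arithmetic I need is the elementary lower bound $\dim(S)\ge d+1$ for both $S=R[X]$ and $S=R[X,1/X]$: starting from a maximal chain $\mathfrak{p}_0\subsetneq\cdots\subsetneq\mathfrak{p}_d$ in $R$, extend it to $\mathfrak{p}_0 S\subsetneq\cdots\subsetneq\mathfrak{p}_d S$ (each $\mathfrak{p}_i S$ is prime since $S/\mathfrak{p}_i S$ is a domain) and append $\mathfrak{p}_d S+(X)$ in the polynomial case, resp. $\mathfrak{p}_d S+(X-1)$ in the Laurent case, whose residue ring is the field $R/\mathfrak{p}_d$; this is a chain of length $d+1$.

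The heart of the argument is then one line. Applying Lemma \ref{u2} to $\mathfrak{m}$ and its contraction $\mathfrak{n}$, and using $\height(\mathfrak{m})=\dim(S)$, we get
\[
\height(\mathfrak{n}S)\;\ge\;\height(\mathfrak{m})-1\;=\;\dim(S)-1\;\ge\;d .
\]
Granting the identity $\height(\mathfrak{n}S)=\height(\mathfrak{n})$ (discussed below), this yields $\height(\mathfrak{n})\ge d$; combined with $\height(\mathfrak{n})\le\dim(R)=d$ it forces $\height(\mathfrak{n})=\dim(R)$. Finally, a prime of height equal to $\dim(R)$ is maximal: otherwise a prime strictly containing it would extend a length-$d$ chain to a length-$(d+1)$ chain, contradicting $\dim(R)=d$. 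Hence $\mathfrak{n}$ is maximal. For $S=R[X,1/X]$ the identity reduces to the polynomial case, since $S$ is the localization of $R[X]$ at $\{X^k\}$ and $\mathfrak{n}R[X]$ is disjoint from this set, so $\height(\mathfrak{n}S)=\height(\mathfrak{n}R[X])$.

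The main obstacle is justifying $\height(\mathfrak{n}S)=\height(\mathfrak{n})$ with no noetherian hypothesis. The inequality $\ge$ is immediate (extend chains as above), so the work is the reverse inequality, which is the classical fact that height is preserved under polynomial extension. I would reduce to the case where $R$ is local with maximal ideal $\mathfrak{n}$ (localizing at $\mathfrak{n}$ leaves untouched the primes lying inside $\mathfrak{n}S$) and induct on $h=\height(\mathfrak{n})$. In a maximal chain below $\mathfrak{n}R[X]$, the top proper term contracts to some $\mathfrak{p}\subsetneq\mathfrak{n}$ — the contraction must drop strictly, because two comparable primes of $R[X]$ with the same contraction to $R$ force the smaller to be extended. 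If that term is itself extended, induction applies; if it lies in the fibre over $\mathfrak{p}$ its height is $\height(\mathfrak{p})+1$, so the only dangerous case is $\height(\mathfrak{p})=h-1$. That case is excluded: concatenating chains gives $\height(\mathfrak{p})+\dim(R/\mathfrak{p})\le h$, hence $\dim(R/\mathfrak{p})\le 1$, whereas a fibre prime contained in the extension of the maximal ideal would require $R/\mathfrak{p}$ to have dimension at least $2$ (over a one-dimensional local domain every fibre prime over the zero ideal contains a polynomial with a unit coefficient, hence escapes $(\mathfrak{n}/\mathfrak{p})[X]$).

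This content-type point, not any formal manipulation, is where the real difficulty lies, and it is exactly what the hypothesis $\height(\mathfrak{m})=\dim(S)$ is buying: without it the conclusion genuinely fails, as with $\mathfrak{m}=(sX-1)$ in $k[s]_{(s)}[X]$, a maximal ideal whose contraction $(0)$ is not maximal and whose height is strictly below $\dim(S)$.
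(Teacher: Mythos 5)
There is a genuine gap, and it sits exactly where you located the difficulty: the identity $\height(\mathfrak{n}S)=\height(\mathfrak{n})$ is \emph{false} for non-noetherian rings, and your sketch of its proof contains a false step. Seidenberg's examples (cited in the paper's introduction) include a one-dimensional quasi-local domain $R$ with $\dim(R[X])=3$; for its maximal ideal $\mathfrak{n}$ one is then forced to have $\height(\mathfrak{n}R[X])\geq 2>1=\height(\mathfrak{n})$, and in particular there exists a nonzero prime of $R[X]$ contracting to $(0)$ yet contained in $\mathfrak{n}R[X]$. This is precisely the configuration your parenthetical claim rules out (that over a one-dimensional local domain every fibre prime over the zero ideal contains a polynomial with a unit coefficient): that claim holds for valuation domains and noetherian domains, but not in general, so the dangerous case $\height(\mathfrak{p})=h-1$ in your induction cannot be excluded. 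As a sanity check that the inequality $\height(\mathfrak{n})\geq\height(\mathfrak{n}S)$ cannot survive in this generality: combined with your (correct) bound $\height(\mathfrak{n}S)\geq\dim(S)-1$ it would give $\height(\mathfrak{n})\geq\dim(S)-1$, which in Seidenberg's example reads $\height(\mathfrak{n})\geq 2$ for a prime of a one-dimensional ring --- absurd.

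The repair is to avoid descending from $\mathfrak{n}S$ to $\mathfrak{n}$ altogether and instead argue by contradiction going \emph{upward}, which is what the paper does. If $\mathfrak{n}$ were not maximal, choose a prime $\mathfrak{n}_1\supsetneq\mathfrak{n}$ and a prime $\mathfrak{m}_1$ of $S$ with $\mathfrak{n}_1S\subsetneq\mathfrak{m}_1$ and $\mathfrak{m}_1\cap R=\mathfrak{n}_1$ (such $\mathfrak{m}_1$ exists because the fibre over $\mathfrak{n}_1$ is a polynomial or Laurent polynomial ring over a field, hence one-dimensional). Then only lower bounds on heights are needed, each obtained either by extending chains or by Lemma \ref{u2} applied to $\mathfrak{m}$ itself:
\[
\height(\mathfrak{m}_1)\;\geq\;\height(\mathfrak{n}_1S)+1\;\geq\;\height(\mathfrak{n}S)+2\;\geq\;\bigl(\height(\mathfrak{m})-1\bigr)+2\;=\;\dim(S)+1,
\]
a contradiction. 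Your observation that Lemma \ref{u2} yields $\height(\mathfrak{n}S)\geq\height(\mathfrak{m})-1$ is exactly the right ingredient; it just has to be fed into a chain built above $\mathfrak{n}$ rather than converted into a statement about $\height(\mathfrak{n})$ itself.
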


\begin{proof}
If possible assume that $\mathfrak{n}$ is not a maximal ideal of $R$. Then $\mathfrak{n}\subsetneq \mathfrak{n}_1$ for some prime ideal $\mathfrak{n}_1$ of $R$. We have a prime ideal $\mathfrak{m}_1$ of $S$ lying above $\mathfrak{n}_1$ i.e. $\mathfrak{m}_1 \cap R = \mathfrak{n}_1$ and $\mathfrak{n}_1S \subsetneq   \mathfrak{m}_1$. Then by Lemma \ref{u2} we have $\height (\mathfrak{m}_1) = \height (\mathfrak{n}_1S) + 1 \geq   \height (\mathfrak{n}S) + 2 \geq \height(\mathfrak{m}) + 1 = \dim (S) + 1 $ which is a contradiction.
\end{proof}

\begin{lemma}\label{u4}
Let $R$ be a ring of finite dimension, $S = R[X]$ or $R[X, 1/X]$ and $\mathfrak{m}$ a maximal ideal of $S$ such that $\height(\mathfrak{m}) = \dim (S)$. Then $\mathfrak{m}$ contains a monic polynomial when $S = R[X]$ and a bimonic polynomial when $S = R[X, 1/X]$. 
\end{lemma}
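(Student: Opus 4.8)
The plan is to reduce everything to the residue field at $\mathfrak{n} = \mathfrak{m}\cap R$ and to manufacture the required polynomial there, then lift it back to $S$. First I would invoke Lemma \ref{u3}: since $\height(\mathfrak{m}) = \dim(S)$, the contraction $\mathfrak{n} = \mathfrak{m}\cap R$ is a maximal ideal of $R$, so $k = R/\mathfrak{n}$ is a field. Reducing modulo $\mathfrak{n}S$ gives $\bar{S} = k[X]$ (resp.\ $k[X,1/X]$) together with the ideal $\bar{\mathfrak{m}} = \mathfrak{m}/\mathfrak{n}S$; because $S/\mathfrak{m}$ is a field and $\mathfrak{n}S \subseteq \mathfrak{m}$, the ideal $\bar{\mathfrak{m}}$ is a genuine (hence nonzero) maximal ideal of $\bar{S}$, and $\mathfrak{m}$ is exactly its preimage under $S \to \bar{S}$. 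The key point is that $\bar{S}$ is a principal ideal domain, so I can pin down $\bar{\mathfrak{m}}$ by a single irreducible generator and then adjust it to have the prescribed extreme coefficients.

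For $S = R[X]$ this is immediate: $\bar{\mathfrak{m}} = (\bar{g})$ for a monic irreducible $\bar{g}\in k[X]$. Choosing any monic lift $f\in R[X]$ of $\bar{g}$ (same degree, leading coefficient $1$, middle coefficients lifted arbitrarily), we have $f \equiv \bar{g} \pmod{\mathfrak{n}S}$ with $\bar{g}\in\bar{\mathfrak{m}}$, so $f\in\mathfrak{m}$ is the sought monic polynomial.

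For $S = R[X,1/X]$ the contraction $\bar{\mathfrak{m}}\cap k[X]$ is again generated by a monic irreducible $\bar{g}\in k[X]$, and now $\bar{g}(0)\neq 0$ since $X$ is a unit modulo $\bar{\mathfrak{m}}$ (otherwise $X\mid\bar{g}$, forcing $\bar{g}=cX$, which is excluded as $\bar{\mathfrak{m}}$ corresponds to a prime not containing $X$). I expect this to be the only step with real content: the obstacle is that a generator is determined only up to the units $cX^{n}$ of $k[X,1/X]$, and such a multiple rescales the highest and lowest coefficients by the \emph{same} factor, so no unit multiple of $\bar{g}$ is bimonic unless its extreme coefficients already agree. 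To circumvent this I would not use a generator but a carefully chosen multiple: set $\bar{f} = \bar{g}\cdot\bigl(X + \bar{g}(0)^{-1}\bigr) \in (\bar{g}) \subseteq \bar{\mathfrak{m}}$. The extra factor is monic with constant term $\bar{g}(0)^{-1}$, so $\bar{f}$ is monic with constant term $\bar{g}(0)\cdot\bar{g}(0)^{-1} = 1$; viewed in $k[X,1/X]$ its lowest- and highest-degree coefficients are both $1$, i.e.\ $\bar{f}$ is bimonic.

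Finally I would lift. Writing $\bar{f} = X^{e} + \bar{c}_{e-1}X^{e-1} + \cdots + \bar{c}_{1}X + 1$ and choosing arbitrary lifts $c_i\in R$ of the middle coefficients while keeping the leading and constant coefficients equal to $1$, the polynomial $f = X^{e} + c_{e-1}X^{e-1} + \cdots + c_{1}X + 1 \in R[X]\subseteq S$ is bimonic and reduces to $\bar{f}\in\bar{\mathfrak{m}}$, whence $f\in\mathfrak{m}$ since $\mathfrak{m}$ is the preimage of $\bar{\mathfrak{m}}$. Everything outside the coefficient-adjustment trick of the Laurent case is the standard reduce-to-the-residue-field-and-lift argument, and the height hypothesis is used only through Lemma \ref{u3}, to guarantee that $\mathfrak{n}$ is maximal so that $R/\mathfrak{n}$ is a field.
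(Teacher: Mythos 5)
Your proof is correct and follows the same route as the paper: Lemma \ref{u3} makes $\mathfrak{n} = \mathfrak{m}\cap R$ maximal, one passes to the PID $S/\mathfrak{n}S$ over the residue field $k=R/\mathfrak{n}$, produces a monic (resp.\ bimonic) element of $\bar{\mathfrak{m}}$, and lifts it coefficientwise. Your extra step in the Laurent case is in fact a needed repair rather than a detour: the paper asserts that $\bar{\mathfrak{m}}$ is \emph{generated} by a bimonic polynomial, which fails in general (e.g.\ $(X-2)\subset\QQ[X,1/X]$ has no bimonic generator, since the units $cX^{n}$ rescale the highest and lowest coefficients by the same constant), whereas your multiple $\bar{g}\cdot\bigl(X+\bar{g}(0)^{-1}\bigr)$ correctly produces a bimonic \emph{element} of $\bar{\mathfrak{m}}$, which is all the lemma requires.
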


\begin{proof}
By previous lemma \ref{u3}, $\mathfrak{n} = \mathfrak{m} \cap R$ is a maximal ideal of $R$. We have $\mathfrak{n}S \subsetneq \mathfrak{m}$.     Now $S/\mathfrak{n}S$ is a PID and $\overline{\mathfrak{m}}$ is generated by a monic polynomial when $S = R[X]$ and a bimonic polynomial when $S = R[X, 1/X]$. So we are through.
\end{proof}

The following is an easy consequence of Lemma \ref{u4}. The proof for $S = R[X]$ is contained in (\cite{brecosta}, Lemma 1).

\begin{lemma}\label{u5}
Let $R$ be a ring of finite dimension and $S = R[X]$ or $R[X, 1/X]$. Let $T$ denote the multiplicative closed set generated by monic polynomials when $S = R[X]$ and that generated by bimonic polynomials when $S = R[X, 1/X]$. Then $\dim( T^{-1}S) = \dim(S) - 1$.
\end{lemma}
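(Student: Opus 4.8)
The plan is to pass to the prime-ideal description of the localization and then bound heights from both sides using Lemmas \ref{u2}, \ref{u3} and \ref{u4}. Recall that the primes of $T^{-1}S$ are exactly the ideals $QT^{-1}S$ with $Q \in \Spec(S)$ and $Q \cap T = \emptyset$, and that this correspondence preserves inclusions. Since every prime contained in a $T$-avoiding prime again misses the multiplicative set $T$, the height of such a $Q$ is unchanged under localization, so that
\[
\dim(T^{-1}S) = \sup\{\,\height_S(Q) : Q \in \Spec(S),\ Q \cap T = \emptyset\,\}.
\]
Thus it suffices to show that the largest height of a $T$-avoiding prime of $S$ is exactly $\dim(S) - 1$; here $\dim(S)$ is finite because $S$ is a localization of $R[X]$ and $\dim(R) < \infty$.

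For the upper bound I would argue that no $T$-avoiding prime can have height $\dim(S)$. Indeed, a prime $Q$ with $\height_S(Q) = \dim(S)$ admits no strictly larger prime, so it is maximal; being a maximal ideal of height $\dim(S)$, Lemma \ref{u4} shows $Q$ contains a monic polynomial (when $S = R[X]$) or a bimonic polynomial (when $S = R[X, 1/X]$), i.e. $Q \cap T \neq \emptyset$. Hence every $T$-avoiding prime has height at most $\dim(S) - 1$, giving $\dim(T^{-1}S) \leq \dim(S) - 1$.

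For the lower bound I would manufacture a single $T$-avoiding prime of height $\dim(S) - 1$. Taking a chain of primes of maximal length, its top is a maximal ideal $\mathfrak{m}$ with $\height(\mathfrak{m}) = \dim(S)$. By Lemma \ref{u3} the contraction $\mathfrak{n} = \mathfrak{m} \cap R$ is a maximal ideal of $R$, and the extended prime $\mathfrak{n}S$ avoids $T$: a monic (resp. bimonic) polynomial has a coefficient equal to $1$, which cannot lie in $\mathfrak{n}$, so no such polynomial belongs to $\mathfrak{n}S$. Finally Lemma \ref{u2} applied to $\mathfrak{m}$ (with $\mathfrak{m} \cap R = \mathfrak{n}$) yields $\height(\mathfrak{m}) - \height(\mathfrak{n}S) \leq 1$, whence $\height(\mathfrak{n}S) \geq \dim(S) - 1$. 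Combined with the upper bound this forces $\dim(T^{-1}S) = \dim(S) - 1$.

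The step I expect to require the most care is the upper bound, precisely because it is tempting, but false, to believe that every $T$-avoiding prime sits strictly below a full-height maximal ideal of $S$; in general a $T$-avoiding prime may itself be maximal in $S$ (for instance $(pX - 1)$ in $\mathbb{Z}_{(p)}[X]$, whose contraction to $\mathbb{Z}_{(p)}$ is $(0)$ and which contains no monic polynomial). The clean way around this is to reason with heights directly and to invoke Lemma \ref{u4} only through the observation that a prime of height $\dim(S)$ is automatically maximal.
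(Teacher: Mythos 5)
Your proof is correct, and it is essentially the argument the paper intends: the paper gives no written proof, stating only that the lemma is an easy consequence of Lemma \ref{u4} (citing Brewer--Costa for the $R[X]$ case), and your upper bound is exactly that consequence, while your lower bound via Lemmas \ref{u2} and \ref{u3} supplies the remaining half in the natural way. Your closing caveat — that $T$-avoiding primes can themselves be maximal, so one must argue with heights rather than with saturated chains — is a worthwhile observation and correctly identifies why Lemma \ref{u4} is stated only for maximal ideals of full height.
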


The following is well known Horrocks theorem (\cite{lam}, Chapter V, Section \S 2,  Affine Horrocks 2.2, Supplement 2.3, Proposition 2.5).

\begin{theorem}\label{u10}
Let $S = R[X]$ or $R[X, 1/X]$ and $P$ a projective $S$-module. Let $f \in S$ be a monic when $S = R[X]$ and a bimonic when $S = R[X, 1/X]$. Then  
$P_f$ is extended from $R$ if and only if $P$ is extended from $R$. In particular $P_f$ is a free $S_f$-module if and only if $P$ is a free $S$-module.
\end{theorem}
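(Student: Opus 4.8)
The plan is to prove the nontrivial implication, namely that $P_f$ extended from $R$ forces $P$ extended from $R$; the converse is formal, since base change of an extended module along the localization $S \to S_f$ is again extended, and freeness is obviously preserved under localization. The backbone of the argument is Quillen's Local--Global Principle (patching) together with the classical local Horrocks theorem. I would treat $S = R[X]$ first and indicate the modifications needed for the Laurent case afterwards.

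First I would invoke Quillen patching. For a finitely generated projective $R[X]$-module $P$, set $\mathfrak{q}(P) = \{ a \in R : P_a \text{ is extended from } R_a \}$; Quillen's theorem asserts that $\mathfrak{q}(P)$ is an ideal of $R$ and that $P$ is extended from $R$ precisely when $\mathfrak{q}(P) = R$. Hence it suffices to show that $\mathfrak{q}(P)$ lies in no maximal ideal, i.e. that for every maximal ideal $\mathfrak{m}$ of $R$ the localization $P_{\mathfrak m} = P \otimes_{R[X]} R_{\mathfrak m}[X]$ is extended from the local ring $R_{\mathfrak m}$; extendedness over $R_{\mathfrak m}$ then spreads out to some $P_a$ with $a \notin \mathfrak m$ because $P$ is finitely presented. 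This reduces everything to the case of a local base ring.

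Next comes the local heart of the matter. With $R_{\mathfrak m}$ local, observe that $(P_{\mathfrak m})_f = (P_f)_{\mathfrak m}$ is extended from $R_{\mathfrak m}$, hence free over $R_{\mathfrak m}[X]_f$, since a finitely generated projective module extended from a local ring is free. Now I would apply the classical local Horrocks theorem: over a local ring $A$, a finitely generated projective $A[X]$-module that becomes free after inverting a monic polynomial is already free. Applied with $A = R_{\mathfrak m}$ and the given monic $f$, this yields that $P_{\mathfrak m}$ is free, in particular extended. Combined with the patching step this proves the equivalence for $R[X]$; the displayed ``in particular'' for freeness is then the special case in which the module one extends from is itself free, which over each local $R_{\mathfrak m}$ is automatic and which globally is recovered from the same local Horrocks input.

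For $S = R[X, 1/X]$ the same two-step scheme works: Quillen patching is equally available for Laurent polynomial rings, so one again reduces to a local base, and a bimonic $f$ plays the role of the monic polynomial. Writing a bimonic $f$ as $X^{-k} g$ with $g \in R[X]$ monic and $g(0)$ a unit, one relates inverting $f$ in $R[X,1/X]$ to inverting $g$ in $R[X]$ and deduces the Laurent statement from the polynomial one, the involution $X \leftrightarrow 1/X$ supplying the symmetry at the two ends; this is the content of the Laurent--Horrocks theorem in \cite{lam}. The genuinely non-formal step --- and the one I expect to be the main obstacle --- is the classical local Horrocks theorem itself: manufacturing a trivialization of $P$ over all of $A[X]$ out of a trivialization over $A[X]_f$. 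This is where the monic (resp. bimonic) hypothesis is indispensable, as it is exactly what licenses a division-with-remainder/degree-reduction argument peeling off free summands, while all the surrounding reductions are routine localization bookkeeping.
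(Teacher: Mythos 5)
The paper does not actually prove this statement: it is quoted as the well-known Horrocks theorem, with a pointer to \cite{lam} (Chapter V, \S 2, Affine Horrocks 2.2, Supplement 2.3, Proposition 2.5). Your outline --- Quillen patching to reduce to a local base ring, then the classical local Horrocks theorem for a monic $f$ and its Laurent analogue (via $f = X^{-k}g$ with $g$ monic and $g(0)$ a unit) for a bimonic $f$ --- is precisely the standard argument given in that reference, valid without noetherian hypotheses, so your proposal is correct and matches the proof the paper implicitly relies on.
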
 

If $R$ is a zero dimensional ring then $\dim(R[X]) = \dim(R[X, 1/X]) = 1$. So Lemma \ref{u5}, Theorem \ref{u10}, Corollary \ref{P9e1} together and an induction argument on the number of variables give the following. See (\cite{cpnm}, Proposition 3.2) for a different argument.

\begin{lemma}\label{u11}
Let $R$ be a zero dimensional ring. Then any projective module over $ R[X_1, \ldots, X_n,  Y_1^{\pm 1}, \ldots,  Y_m^{\pm 1}]$ is free.
\end{lemma}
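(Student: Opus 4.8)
The plan is to prove the statement by induction on the total number of variables $N = n + m$, but to run the induction for \emph{all} zero-dimensional base rings at once; this extra uniformity is exactly what lets the base ring change during the inductive step. The base case $N = 0$ is immediate: the ring is $R$ itself, and by Corollary \ref{P9e1} every projective module over a zero-dimensional ring is free.

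For the inductive step I would assume $N \geq 1$ together with the result for every zero-dimensional ring in $N - 1$ variables, and single out one variable to remove. Choosing a polynomial variable $X_1$ when $n \geq 1$ and a Laurent variable $Y_1$ otherwise, I treat the polynomial case, the Laurent case being word-for-word the same with ``monic'' replaced by ``bimonic''. Let $T$ be the multiplicative set of monic polynomials in $X_1$ over $R$ and put $R_1 = T^{-1}R[X_1]$. Since $\dim R[X_1] = 1$, Lemma \ref{u5} gives $\dim R_1 = 0$, so $R_1$ is again zero-dimensional. The key structural point is that the elements of $T$ involve only $X_1$, so localizing at $T$ commutes with the remaining variables and produces
$$T^{-1}A = R_1[X_2, \ldots, X_n, Y_1^{\pm 1}, \ldots, Y_m^{\pm 1}],$$
a polynomial/Laurent ring in $N - 1$ variables over the zero-dimensional ring $R_1$.

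With this in hand, given a projective $A$-module $P$, the localization $T^{-1}P$ is projective over $T^{-1}A$ and is therefore free by the induction hypothesis applied to $R_1$. Because $P$ is finitely presented, freeness after inverting the whole set $T$ already occurs after inverting a single $g \in T$, giving $P_g$ free over $A_g$; as a product of monic polynomials in $X_1$, this $g$ is monic. Viewing $A = B[X_1]$ with $B = R[X_2, \ldots, X_n, Y_1^{\pm 1}, \ldots, Y_m^{\pm 1}]$, I would then invoke Horrocks' theorem in the form of Theorem \ref{u10}: for the monic $g$, freeness of $P_g$ forces freeness of $P$, which closes the induction.

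The one place where care is genuinely needed is the choice made at the start of the inductive step. One must localize at the monic (or bimonic) polynomials in a \emph{single} variable, so that the localized ring is once more a polynomial/Laurent ring in fewer variables, now over the new zero-dimensional ring $R_1 = T^{-1}R[X_1]$ furnished by Lemma \ref{u5}; localizing in any way that mixes variables would destroy this shape and stall the induction. Everything after that is routine: passing from a multiplicative set to a single denominator, and the Horrocks lift, which is precisely the statement of Theorem \ref{u10}.
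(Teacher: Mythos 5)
Your proof is correct and is essentially the paper's own argument: the paper derives this lemma by the same induction on the number of variables, combining Corollary \ref{P9e1} for the base case with Lemma \ref{u5} and Horrocks' Theorem \ref{u10} for the inductive step. Your write-up simply makes explicit the details the paper leaves to the reader, namely running the induction uniformly over all zero-dimensional base rings and descending from the multiplicative set $T$ to a single monic (or bimonic) denominator before invoking Theorem \ref{u10}.
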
 


\section{Action of transvection}
Let $S \rightarrow R$ be a ring homomorphism and $P$ a projective $R$-module. We shall say that $P$ is extended from $S$ if $P = Q \otimes_{S} R$ for some projective $S$-module $Q$. The following is an analogue of the elementary action on free modules for projective modules.

\begin{definition}{\rm(Transvections)}\label{ce1}
Let $P$ be a projective module over $R$, $p \in P, \pi \in P^* = \Hom(P, R)$ such that $\pi(p) = 0$. Let $\pi_p \in \End(P)$ be defined by $\pi_p(x) = \pi(x)p, x \in P$. Clearly $\pi_p^2 = 0$. An automorphism of the form $1 + \pi_p$ is called a transvection of $P$ if either $p \in \Um(P)$ or $\pi \in \Um(P^*)$. The group generated by transvections of $P$ is a normal subgroup of $\Aut(P)$ and is denoted by $\Trans(P)$. Let $I$ be an ideal of $R$. An automorphism of the form $1 + \pi_p$ is called a transvection relative to $I$ if either $p \in IP$ or $\pi \in IP^*$. The subgroup generated by relative transvections is denoted by $\Trans(P, I)$.

Let $P = R \oplus Q$. Let $\pi = pr_1$ i.e. projection on the first coordinate and  $p = (0, q) \in R \oplus Q$. Then $(1 + \pi_p) (a, x) = (a, x + aq)$. Again if $\pi = (0, \phi), \phi \in Q^*$ and $p = (1,0)$, then  $(1 + \pi_p )(a, x) = (a + \phi(x), x)$. Transvections of $R \oplus Q$ of these forms are called elementary transvections and the group generated by them is denoted by $\ETrans(P)$. When $q \in IQ$ or $\phi \in IQ^*$ then the transvections are called the relative elementary transvections with respect to an ideal $I$ and the group generated by them is denoted by $\ETrans(IP)$. The normal closure of $\ETrans(IP)$ in  $\ETrans(P)$ is denoted by $\ETrans(P, I)$.

We say that unimodular elements $v = (a, p), v' = (a', p') \in \Um(P, I)$ are in the same elementary orbit if we have $\tau \in \ETrans(P, I)$ such that $\tau (v) = v'$. We denote this by $v \sim_E v'$. 
\end{definition}

The following is an easy consequence of Corollary \ref{P8}.

\begin{theorem}\label{ce2}
Let $\Spec(R) = V(s) \sqcup D(s), s \in R$ such that each of $V(s)$ and $D(s)$ has dimension at most $d$ and $P = R \oplus Q $ a projective module of rank $r \geq d + 2$. Then  $\ETrans(P)$ acts transitively on $\Um(P)$.
\end{theorem}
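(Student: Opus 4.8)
The plan is to show that every unimodular element of $P = R \oplus Q$ lies in the elementary orbit of the standard element $(1, 0)$; since $\sim_E$ is an equivalence relation, this immediately gives transitivity of the $\ETrans(P)$-action on $\Um(P)$. Write a typical element of $\Um(P)$ as $(a, q)$ with $a \in R$ and $q \in Q$, and recall from Definition \ref{ce1} that the two families of elementary transvections act by $(a, x) \mapsto (a, x + a\tilde q)$ for $\tilde q \in Q$ (first type) and $(a, x) \mapsto (a + \phi(x), x)$ for $\phi \in Q^* = \Hom(Q, R)$ (second type).

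The first step is to make the $Q$-coordinate unimodular, and this is where the rank hypothesis enters. Since $P$ has rank $r \geq d + 2$, the summand $Q$ is projective of rank $r - 1 \geq d + 1$. Viewing $Q = Rq + Q$ and using that $(a, q) \in \Um(R \oplus Q)$, Corollary \ref{P8} applies (its hypotheses hold precisely because $\Spec(R) = V(s) \sqcup D(s)$ with each piece of dimension at most $d$ and because the rank of $Q$ is at least $d + 1$) and produces $\tilde q \in Q$ with $q + a\tilde q \in \Um(Q)$. Applying the first-type elementary transvection with this $\tilde q$ gives $(a, q) \sim_E (a, q_0)$, where $q_0 := q + a\tilde q$ is unimodular in $Q$.

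With $q_0 \in \Um(Q)$, choose $\psi \in Q^*$ with $\psi(q_0) = 1$. Then the second-type transvection determined by $\phi = (1 - a)\psi$ sends $(a, q_0)$ to $(a + (1 - a)\psi(q_0), q_0) = (1, q_0)$, so that $(a, q_0) \sim_E (1, q_0)$. Finally a first-type transvection with $\tilde q = -q_0$ carries $(1, q_0)$ to $(1, 0)$. Chaining the three moves yields $(a, q) \sim_E (1, 0)$ for an arbitrary unimodular element, as desired.

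I do not anticipate a serious obstacle here: the argument is the standard three-move reduction, and the only substantive input is Corollary \ref{P8}, whose hypotheses are met exactly when $r \geq d + 2$. The one point deserving care is to verify that the elements $\tilde q$ and $\phi$ produced at each stage genuinely determine elementary transvections in the sense of Definition \ref{ce1}, so that the resulting automorphisms lie in $\ETrans(P)$; this is immediate from the explicit block formulas but should be recorded.
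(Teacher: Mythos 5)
Your argument is correct and is precisely the intended one: the paper gives no proof, stating only that the theorem is ``an easy consequence of Corollary \ref{P8},'' and your three-move reduction (first make the $Q$-coordinate unimodular via Corollary \ref{P8} applied to $Q$ of rank $r-1\geq d+1$, then normalize the $R$-coordinate to $1$, then kill the $Q$-coordinate) is exactly that consequence. Your closing remark is also right that the maps used are genuine elementary transvections in the sense of Definition \ref{ce1}, since $pr_1\in\Um(P^*)$ and $(1,0)\in\Um(P)$ make the two block forms transvections for arbitrary $\tilde q\in Q$ and $\phi\in Q^*$.
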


\begin{definition}{\rm(Excision algebra and Excision module)}\label{ce3}
If $I$ is an ideal of $R$, one constructs the ring $R \oplus I$ with multiplication defined by $(a,  i)(b,  j) = (ab,  aj+bi+ij)$. We have two ring homomorphisms $\mathfrak{f}, pr_1 : R \oplus I \twoheadrightarrow R$ defined by $\mathfrak{f}(a, i) = a + i,  pr_1(a, i) = a$ respectively. Both $\mathfrak{f},  pr_1$ have a section $ \mathfrak{i}: R \hookrightarrow R \oplus I$ defined by $\mathfrak{i}(a) = (a, 0)$ satisfying $\mathfrak{f} \circ \mathfrak{i} = 1_R,  pr_1 \circ \mathfrak{i} = 1_R$. It is easy to see that if $I$ is  finitely generated then $R \oplus I$ is an $R$ algebra of dimension same as that of $R$ {\rm(see \cite{cpm}, Proposition 3.1)}. We call $R \oplus I$ the excision algebra of $R$.

Let $P$ be a finitely generated projective $R$-module. Then  we call $P \oplus IP = (R \oplus I) \otimes_{\mathfrak{i}} P$ which is a finitely generated projective $R \oplus I$-module, the Excision module. Note that $(P \oplus IP)^* = P^* \oplus IP^*$ as $P$ is a projective $R$-module.
\end{definition}

\begin{remark}\label{ce4}
 Let $Q = R \oplus P$ be a projective module over $R$. We have an obvious map  $\mathfrak{F} = R\otimes_{\mathfrak{f}}{ - }  : \Um(Q \oplus IQ, 0 \oplus I) \rightarrow \Um(Q, I)$ defined by $\mathfrak{F}((a_1, p_1), (a_2, p_2)) = (a_1 + a_2, p_1 + p_2)$ for $(a_1, p_1) \in Q$ and $(a_2, p_2) \in IQ$. Let  $q = (1 +  i, p) \in \Um(Q, I)$
and  $\phi = (1 +  j, \psi) \in \Um(Q^*, I)$
such that $\phi(q) = 1$.  We define $\tilde{q} = ( (1, 0), (i, p)) \in Q \oplus IQ$ and $\tilde{\phi} = ( (1, 0), (j, \psi)) \in (Q^ *\oplus IQ^*)$. Clearly $\tilde{\phi}(\tilde{q}) = (1, 0)$. So $\tilde{q} \in \Um (Q \oplus IQ, 0 \oplus I)$ and $\mathfrak{F}(\tilde{q}) = q$ i.e. $\mathfrak{F}$ is a surjective map.

We shall say $\tilde{q}, \tilde{\phi}$ to be a lift of $q, \phi$ respectively. If there exists a transvection $\tilde{\tau}\in \ETrans(Q \oplus IQ)$ such that $\tilde{\tau}(\tilde{q}) = (1, 0)$, then we can modify $\tilde{\tau}$ and assume that $\tilde{\tau} \in \ETrans(Q \oplus IQ, 0 \oplus I)$. This is possible because the map $pr_1 : R \oplus I \twoheadrightarrow R$ has a section $\mathfrak{i}$ and due to the following Lemma \ref{ce5}. Now applying $R \otimes_{\mathfrak{f}}{-}$ we have $\tau \in \ETrans(Q, I)$ such that $\tau(q) = (1, 0)$. 

Therefore, to show that the action of $\ETrans(Q, I)$ on $ \Um(Q, I)$ is transitive, it is enough to show that the action of $\ETrans(Q \oplus IQ)$ on $\Um(Q \oplus IQ)$ is transitive.
If $q \in \Um(Q, I)$, then $q \in \Um(Q, J)$ for some finitely generated sub-ideal $J$ of $I$. So to prove that the action of the group of transvections on $\Um(Q, I)$ is transitive, we may assume without loss of generality that $I$ is finitely generated and therefore $\dim(R \oplus I) = \dim(R)$ {\rm(\cite{cpm}, Proposition 3.1)}.
\end{remark}

\begin{lemma}\label{ce5}
Let $ P = S \oplus Q$ be a projective $S$-module. Let $ f : R \twoheadrightarrow S$ be a ring homomorphism which admits a section $g : S \rightarrow R$ such that $fg = 1$. Let $I = ker(f)$ and $\mathfrak{v} = (a, q) \in \Um(R \otimes_g P, I)$ such that $\tau(\mathfrak{v}) = (1, 0)$ for some transvection $\tau \in \ETrans(R \otimes_g P)$. Then there exists $\tau' \in \ETrans(R \otimes_g P, I)$ such that $\tau'(\mathfrak{v}) = (1, 0)$.
\end{lemma}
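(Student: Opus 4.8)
The plan is to exploit the splitting $fg = 1_S$ at the level of transvection groups. First I would record that base change along the two ring maps induces group homomorphisms $f_* \colon \ETrans(R \otimes_g P) \to \ETrans(P)$ and $g_* \colon \ETrans(P) \to \ETrans(R \otimes_g P)$: base change carries an elementary transvection of $P = S \oplus Q$ to an elementary transvection of $R \otimes_g P = R \oplus (R \otimes_g Q)$ and is functorial, and since $fg = 1_S$ we have $f_* g_* = 1$. On elements, reduction modulo $I = \ker f$ identifies $S \otimes_f (R \otimes_g P)$ with $P$ (again because $fg = 1_S$); write $f_*$ also for this reduction map, which intertwines the transvection actions and, because $\mathfrak{v} \in \Um(R \otimes_g P, I)$, satisfies $f_*(\mathfrak{v}) = (1,0)$ in $P$. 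Let $\iota \colon P \to R \otimes_g P$ be the natural inclusion, so that $\iota(1,0) = (1,0)$ and $g_*(\rho) \circ \iota = \iota \circ \rho$ for every $\rho \in \ETrans(P)$.

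Next I would set $\rho = f_* \tau \in \ETrans(P)$, $\sigma = g_*(\rho)$, and define $\tau' = \sigma^{-1}\tau$. Since $f_*$ intertwines the actions and $f_*(\mathfrak{v}) = (1,0)$, we get $\rho(1,0) = f_*(\tau(\mathfrak{v})) = f_*(1,0) = (1,0)$ in $P$; applying $\iota$ then gives $\sigma(1,0) = \iota(\rho(1,0)) = \iota(1,0) = (1,0)$, so $\sigma$ fixes $(1,0) \in R \otimes_g P$. Hence $\tau'(\mathfrak{v}) = \sigma^{-1}(\tau(\mathfrak{v})) = \sigma^{-1}(1,0) = (1,0)$, which is the required identity. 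Moreover $f_*(\tau') = f_*(\sigma)^{-1} f_*(\tau) = (f_* g_* f_* \tau)^{-1} f_* \tau = (f_*\tau)^{-1} f_* \tau = 1$, using $f_* g_* = 1$, so $\tau' \in \ker f_*$. It therefore remains only to prove the inclusion $\ker f_* \subseteq \ETrans(R \otimes_g P, I)$.

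This last inclusion is the step I expect to be the real obstacle, since an element of the absolute group that becomes trivial modulo $I$ does not obviously lie in the relative (normal-closure) group $N := \ETrans(R \otimes_g P, I)$. To handle it I would use that $g$, being a section of $f$, splits $R = g(S) \oplus I$ as $S$-modules; writing $Q' = R \otimes_g Q$, flatness of the projective module $Q$ yields $Q' = \iota(Q) \oplus IQ'$ and dually $(Q')^* = g_*(Q^*) \oplus I(Q')^*$. Because elementary transvections of a fixed type add in their parameter, each generating elementary transvection of $\ETrans(R \otimes_g P)$ factors as the product of the $g_*$-image of an elementary transvection of $P$ with a relative elementary transvection (parameter in $IQ'$ or in $I(Q')^*$), which lies in $N$. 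Consequently the quotient $\ETrans(R \otimes_g P)/N$ is generated by the image of $g_*$. Since the generators of $N$ are killed by $f_*$ and $\ker f_*$ is normal, $N \subseteq \ker f_*$, so $f_*$ descends to $\bar{f}_* \colon \ETrans(R \otimes_g P)/N \to \ETrans(P)$; composing $\bar{f}_*$ with the map $\ETrans(P) \to \ETrans(R \otimes_g P)/N$ induced by $g_*$ gives the identity, and the latter map is onto by the generation statement just proved. Hence $\bar{f}_*$ is injective, giving $\ker f_* = N$ and completing the argument. The one point demanding care is verifying that same-type additivity genuinely separates the $g_*$-part from the $I$-part of each generator, which is precisely where the module splitting $R = g(S) \oplus I$ enters.
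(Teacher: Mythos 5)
Your proposal is correct and follows essentially the same route as the paper: the normalization $\tau \mapsto (g_*f_*\tau)^{-1}\tau$ is literally the paper's replacement of $\tau$ by $\{R\otimes_g S\otimes_f\tau\}^{-1}\tau$, and the decomposition of each elementary generator into a relative factor times an extended factor via $r = (r - gf(r)) + gf(r)$ is exactly the paper's key step. The only cosmetic difference is that you package the final inclusion as the group-theoretic identity $\ker f_* = \ETrans(R\otimes_g P, I)$ via a retract argument on the quotient, where the paper instead rearranges the word $\prod_k\alpha_k\beta_k$ into $\prod_k\gamma_k\alpha_k\gamma_k^{-1}\cdot\gamma_{n+1}$ and kills the extended tail $\gamma_{n+1}$; both rest on the same facts, so no gap.
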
 

\begin{proof}
We have $R \otimes _g P = R \oplus R \otimes_g Q$. For notational convenience we write $r \otimes s \in R \otimes_g S$ as $rs$ and $r \otimes_g q \in R \otimes Q$ as $rq$. For $(xa, yq) \in \Um(R \otimes_g P), (a, q) \in P, x, y \in R$ we define $e_{12}(r \phi)(xa, yq) = (xa + ry\phi(q), yq), r \in R, \phi \in Q^*$ and $e_{21}(rq')(xa, yq) = (xa, yq + rxaq'), r \in R, q, q' \in Q$. Since $Q$ is a projective $S$-module we have $(R \otimes_g Q)^* = R \otimes_g Q^*$. Therefore,  $\ETrans( R \otimes_g P)$ is generated by transvections of the above mentioned type. 

Now $e_{12}(r \phi) = e_{12}((r - gf(r)) \phi) e_{12}(gf(r)\phi) = \alpha \beta$ where $\alpha = e_{12}((r - gf(r))\phi) \in \ETrans ( IR \otimes_g P)$ and $\beta = e_{12}(gf(r)\phi)$ is extended from a transvection of $P$.  Similarly  $e_{21}(rq) = e_{21}((r - gf(r))q)e_{21}(gf(r)q)$ has a decomposition of such type.

Now $\tau(\mathfrak{v}) = (1, 0)$ gives $(S \otimes_f \tau) (1, 0) = (1, 0)$ as $S \otimes_f (\mathfrak{v}) = (1, 0)$. Therefore, replacing $\tau$ by $\{R\otimes_g S \otimes_f \tau\}^{-1}\tau$  we may assume that $S \otimes_f \tau = 1$. We write $\tau = \prod_{k= 1}^ne_{i_kj_k}(*) =\prod_{k= 1}^n \alpha_k \beta_k$ where $\alpha_k \in \ETrans ( IR \otimes P)$ and $\beta_k$'s are extended from transvections of $P$. Let $\gamma_i = \beta_1\beta_2 \ldots \beta_{i - 1}$. Then $\gamma_i$'s are extended and $\tau = \prod_{k= 1}^n \gamma_k \alpha_k \gamma_k^{-1} \times \gamma_{n + 1}$. We see that $S \otimes_f \gamma_{n +1} = 1$ which gives $\gamma_{n +1} = 1$ as $\gamma_{n + 1}$ is extended. Thus after this modification we have $\tau \in \ETrans(R \otimes_g P, I)$ such that $\tau(\mathfrak{v}) = (1, 0)$.
\end{proof}

\begin{lemma}\label{ce6}
Let $R$ be a ring such that $\Um_{n + 1}(R[X]) = e_1\E_{n + 1}(R[X]), n \geq 1$. Let $v = (f_0, f_1, \ldots, f_n) \in \Um_{n + 1}(R[X, 1/X])$ such that $lc(f_0)$ is a unit. Then $v$ is completable to a matrix in $\E_{n +1}(R[X, 1/X])$. 
\end{lemma}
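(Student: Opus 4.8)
The plan is to transport $v$ into the subring $R[X]$ by elementary operations, apply the hypothesis there, and then push the resulting elementary completion back into $R[X,1/X]$. Since $R[X]\subseteq R[X,1/X]$ gives $\E_{n+1}(R[X])\subseteq \E_{n+1}(R[X,1/X])$, it suffices to prove that $v$ lies in the orbit $e_1\,\E_{n+1}(R[X,1/X])$, i.e.\ that $v$ is the first row of a matrix in $\E_{n+1}(R[X,1/X])$. This orbit is stable under right multiplication by elementary matrices, so throughout I am free to replace $v$ by any row in its $\E_{n+1}(R[X,1/X])$-orbit. As a first normalization, let $m$ be the lowest degree occurring in $f_0$, so that $lc(f_0)$ is the coefficient of $X^m$. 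The Whitehead lemma gives $\operatorname{diag}(X^{-m},X^{m},1,\dots,1)\in\E_{n+1}(R[X,1/X])$ (this is where $n\ge 1$ is used), and right multiplying $v$ by it replaces $f_0$ by $X^{-m}f_0\in R[X]$, whose constant term is $lc(f_0)$, a unit. Hence I may assume from now on that $f_0\in R[X]$ with $f_0(0)\in R^{\times}$.

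The crux is to descend the remaining entries to $R[X]$. Writing $f_0=f_0(0)+Xg$ with $f_0(0)$ a unit, the image of $X$ is invertible in $R[X]/(f_0)$; therefore localizing at $X$ changes nothing, and the natural map $R[X]/(f_0)\to R[X,1/X]/(f_0)$ is an isomorphism. Consequently, for each $i\ge 1$ the class of $f_i$ modulo $f_0$ in $R[X,1/X]$ has a representative $\bar f_i\in R[X]$, i.e.\ $f_i+c_if_0=\bar f_i$ for some $c_i\in R[X,1/X]$. Adding $c_i$ times the $f_0$-column to the $f_i$-column (an elementary operation over $R[X,1/X]$, which leaves $f_0$ untouched) replaces $v$ by $v''=(f_0,\bar f_1,\dots,\bar f_n)\in R[X]^{n+1}$, still in the same $\E_{n+1}(R[X,1/X])$-orbit.

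Next I would check that $v''$ is unimodular over $R[X]$, not merely over $R[X,1/X]$. Set $I=(f_0,\bar f_1,\dots,\bar f_n)\subseteq R[X]$. Unimodularity of $v''$ over $R[X,1/X]=R[X]_X$ means that no prime of $R[X]$ avoiding $X$ contains $I$, so $V(I)\subseteq V(X)$. On the other hand, any prime containing both $f_0$ and $X$ would contain $f_0-Xg=f_0(0)\in R^{\times}$, which is impossible; hence $V(I)\cap V(X)=\varnothing$. The two facts together force $V(I)=\varnothing$, so $I=R[X]$ and $v''\in\Um_{n+1}(R[X])$.

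Finally, the hypothesis $\Um_{n+1}(R[X])=e_1\,\E_{n+1}(R[X])$ gives $v''=e_1\sigma$ with $\sigma\in\E_{n+1}(R[X])\subseteq\E_{n+1}(R[X,1/X])$. Tracing back the invertible elementary operations of the first two steps then yields $v\in e_1\,\E_{n+1}(R[X,1/X])$, which is exactly the statement that $v$ is completable to a matrix in $\E_{n+1}(R[X,1/X])$. The genuinely delicate point is the pair of observations that make the descent work: the identification $R[X]/(f_0)\cong R[X,1/X]/(f_0)$ coming from $lc(f_0)\in R^{\times}$, and the Zariski-spectrum argument showing that the descended row stays unimodular over $R[X]$; the rest is routine bookkeeping with elementary matrices.
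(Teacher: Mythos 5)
Your proof is correct and follows essentially the same route as the paper's: both use elementary column operations exploiting the unit lowest coefficient of $f_0$ to transport $v$ into $R[X]$, check that the resulting row is unimodular over $R[X]$, and then invoke the hypothesis $\Um_{n+1}(R[X])=e_1\E_{n+1}(R[X])$. The only cosmetic differences are that you normalize $f_0$ itself via a Whitehead diagonal matrix and pivot on $f_0(0)\in R^{\times}$, whereas the paper keeps clearing denominators by transvections and arranges $f_1(0)=1$ instead; your explicit Zariski-spectrum verification of unimodularity over $R[X]$ is precisely the step the paper leaves implicit.
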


\begin{proof}
Since $lc(f_0) = 1$, we can find $g_i \in R[X, 1/X]$ such that $f_i - g_i f_0 \in R[X]\ \forall\ i \geq 1$ and $lc(f_1 - g_1 f_0) = 1$. So  subtracting $g_i f_0$ from $f_i$, we assume that $f_i \in R[X]\ \forall \ i \geq 1$ and $lc(f_1) = 1$. Now we subtract a suitable multiple of $f_1$ from $f_0$ and assume that $f_0 \in R[X]$ also. Thus after suitable elementary operations we have $v \in \Um_{n + 1}(R[X, 1/X])$, $f_i \in R[X]\ \forall \ i \geq 0$ and $f_1(0) = 1$. Therefore, $v \in \Um_{n + 1}(R[X])$. Now $v$ is completable to a matrix in $\E_{n +1}(R[X])$ by the given hypothesis.
\end{proof}

The following result is proved in (\cite{abed}, Proposition 3.6) for the Laurent polynomial ring $R[X, 1/X]$ in the absolute case  .

\begin{lemma}\label{ce7}
Let $R$ be a zero dimensional ring and $I$ an ideal of $R$. Let $S = R[X]$ or $R[X, 1/X]$. Then $\Um_n(S, I) = e_1\E_n(S, I), n \geq 2$.
\end{lemma}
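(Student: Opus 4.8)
The plan is to first remove the relative structure by an excision argument, and then prove the absolute identity $\Um_n(T)=e_1\E_n(T)$ for every zero dimensional ring by transporting a field computation across the spectrum.

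\emph{Step 1 (reduction to the absolute case).} Since a row that is $e_1$ modulo $IS$ is already $e_1$ modulo $JS$ for the finitely generated subideal $J$ generated by the finitely many coefficients occurring in its non‑leading entries, I may assume $I$ is finitely generated. Writing $S^n=S\oplus S^{n-1}$, the elementary transvections of $S^n$ are exactly the elementary matrices, so $\ETrans(S^n,IS)=\E_n(S,IS)$ and $\Um(S^n,IS)=\Um_n(S,IS)$. Applying Remark \ref{ce4} (with Lemma \ref{ce5}) to the base ring $S$, ideal $IS$, and $Q=S^n$, the transitivity of $\E_n(S,IS)$ on $\Um_n(S,IS)$ is reduced to the \emph{absolute} transitivity of the elementary group on the unimodular rows of the excision ring $S\oplus IS$. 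Now $S\oplus IS=(R\oplus I)[X]$ or $(R\oplus I)[X,1/X]$, and $D:=R\oplus I$ is again zero dimensional because $I$ is finitely generated (\cite{cpm}, Proposition 3.1). Hence it suffices to prove, for every zero dimensional ring, the absolute statement $\Um_n(T)=e_1\E_n(T)$ with $T=R[X]$ or $R[X,1/X]$, $n\geq2$.

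\emph{Step 2 (the case $T=R[X]$).} Let $N=\sqrt0$; then $N[X]$ is the nilradical of $R[X]$ and $R[X]/N[X]=\bar R[X]$ with $\bar R$ von Neumann regular. Given $v\in\Um_n(R[X])$, I first solve over $\bar R[X]$ and lift: if $\bar\epsilon\in\E_n(\bar R[X])$ carries $\bar v$ to $e_1$, then lifting the coefficients of the elementary factors of $\bar\epsilon$ gives $\epsilon\in\E_n(R[X])$ with $v\epsilon\in\Um_n(R[X],N[X])$, and because $N[X]$ is a nil ideal a direct elementary calculation (clear the tail using that $1+(\text{nilpotent})$ is a unit, then normalise the first coordinate by a Whitehead identity, all kept relative to $N[X]$) shows $\Um_n(R[X],N[X])=e_1\E_n(R[X],N[X])$. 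So I may assume $R$ is von Neumann regular. For each $\mathfrak p\in\Spec(R)$ the localisation is a field $k_{\mathfrak p}$, and $k_{\mathfrak p}[X]$ is Euclidean, whence $\Um_n(k_{\mathfrak p}[X])=e_1\E_n(k_{\mathfrak p}[X])$; choose $\epsilon_{\mathfrak p}$ reducing $v\otimes k_{\mathfrak p}$ to $e_1$. As $\epsilon_{\mathfrak p}$ is a finite product of elementary matrices with finitely many coefficients, it descends to $R_s[X]$ for some $s\notin\mathfrak p$, and in a von Neumann regular ring $R_s\cong R_e$ for the idempotent $e$ with $(s)=(e)$; thus $v$ is reduced to $e_1$ over $R_e[X]$ with $\mathfrak p\in D(e)$. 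The $D(e)$ cover the quasi‑compact $\Spec(R)$, so finitely many idempotents $e_1,\dots,e_m$ generate the unit ideal; replacing them by the complete orthogonal family $f_1=e_1,\ f_i=e_i(1-e_1)\cdots(1-e_{i-1})$ gives $R\cong\prod_i R_{f_i}$, over each factor of which $v$ is elementarily reducible to $e_1$. Since $\E_n$ and $\Um_n$ are compatible with finite products, these reductions assemble to the required $\epsilon\in\E_n(R[X])$.

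\emph{Step 3 (the case $T=R[X,1/X]$).} I run the same patching, again reducing modulo $N$ to the von Neumann regular case. Over a field stalk $k_{\mathfrak p}$ the identity $\Um_n(k_{\mathfrak p}[X])=e_1\E_n(k_{\mathfrak p}[X])$ from Step 2 is exactly the hypothesis of Lemma \ref{ce6}; after an elementary move making the entry $f_0$ nonzero (so that $lc(f_0)$, the lowest coefficient of a nonzero Laurent polynomial over a field, is a unit), Lemma \ref{ce6} yields $v\otimes k_{\mathfrak p}\in e_1\E_n(k_{\mathfrak p}[X,1/X])$. Spreading this reduction to a basic open $D(e)$, invoking quasi‑compactness and the same complete orthogonal idempotents, and finally lifting across the nil ideal $N[X,1/X]$ as in Step 2, gives $v\in e_1\E_n(R[X,1/X])$.

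The main obstacle is the borderline length $n=2$. Since $\dim(T)=1$, Bass‑type stable range transitivity only delivers the result for $n\geq3$, so $n=2$ must exploit the one–variable structure, and this is precisely what the Euclidean algorithm over the field stalks supplies; the real work is to transport that field computation to the (non‑noetherian, possibly non‑reduced) ring $R$, which is handled by the nilradical reduction together with the idempotent patching over the spectrum of the von Neumann regular quotient. The secondary difficulty, special to the Laurent case, is arranging $lc(f_0)$ to be a unit so that Lemma \ref{ce6} applies; performing this stalk by stalk — where it is automatic for a nonzero entry — and only afterwards patching is what keeps the argument clean.
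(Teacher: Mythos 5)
Your proof is correct, but the route through the absolute case is genuinely different from the paper's. Your Step 1 (excision, reducing the relative statement to the absolute one over the zero dimensional ring $R\oplus I$) is exactly the paper's Remark \ref{ce4}. For the absolute case, however, the paper does not pass to the reduced quotient and patch over $\Spec$: it inducts on the total number $N$ of coefficients of the entries, and when the relevant leading (or lowest) coefficient $a$ is a non-unit it invokes Lemma \ref{P3} to produce $b$ in the annihilator of $a$ with $xa+b=1$, hence a splitting $R\cong R/(ax)\times R/(b)$ in which $a$ becomes $0$ in one factor (so a coefficient is dropped and induction applies) and a unit in the other (so the division algorithm reduces $N$); Lemma \ref{ce6} transfers the polynomial case to the Laurent case exactly as in your Step 3. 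You replace this coefficient-by-coefficient splitting with a single global one: reduce modulo the nilradical, note that a reduced zero dimensional ring is von Neumann regular with field stalks, run the Euclidean algorithm over each stalk, spread out to basic opens $D(e)$, and patch by quasi-compactness and a complete orthogonal family of idempotents, finally lifting across the nil ideal. Both arguments are valid and of comparable length; the paper's is more self-contained (only Lemma \ref{P3} and the division algorithm, no structure theory of von Neumann regular rings and no separate relative statement over a nil ideal), while yours isolates the field computation cleanly and makes transparent that the result is ultimately a statement about $k[X]$ and $k[X,1/X]$ for fields $k$. The two facts you leave as routine --- that $\Um_n(A,\mathfrak a)=e_1\E_n(A,\mathfrak a)$ for $\mathfrak a$ a nil ideal and $n\geq 2$, and that $\E_n$ and $\Um_n$ commute with finite products of rings --- are both standard and true, so I see no gap; just be aware that the first of these needs the small conjugation trick to keep the step $(u,0,\ldots,0)\mapsto e_1$ inside the relative group $\E_n(A,\mathfrak a)$, since the obvious factorization uses $E_{21}(1)$.
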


\begin{proof}
By Remark \ref{ce4} we only consider the absolute case. We can also assume that $R$ is a reduced ring. Let $v = (f_1, f_2 \ldots, f_n) \in \Um_n(S)$. We need to show that $v$ is elementarily completable.

\noindent
{\bf Case 1:} $S = R[X]$. 

We shall induct on the total number of coefficients  $N$ of $f_i$'s for arbitrary zero dimensional ring. If $N = 1$ then one of the coordinates of $v$ is zero. So we are done.  Let $deg (f_0)$ be the minimum among the degrees of the coordinates $f_i$.
 If $hc(f_0)$ is a unit. Then by the division algorithm we can easily reduce $N$ and induction prevails. So we assume that $a = hc(f_1)$ is a non-unit. By Lemma \ref{P3} we have $b \in ann (a)$ such that $xa + b = 1$. Now $R \cong R/(ax) \times R/(b)$, each factor being zero dimensional. In $R/(ax)$ we have  $\bar{a} = 0$ as it is killed by the unit $\bar{b}$. So the image $\bar{v}\in \Um_n(R/ax)$ is  elementarily completable by the induction hypothesis. On the other hand the image $\bar{a}$ is a unit in $R/(b)$. So by the division algorithm we can reduce the total number of coefficients of $\bar{v}\in \Um_n(R/(b))$ to less than $N$. Thus the image $\bar{v}\in \Um_n(R/(b))$ is also  elementarily completable. So $v$ is  elementarily completable.

\noindent
{\bf Case 2:} $S = R[X, 1/X]$.

As before we shall induct on the total number of coefficients $N$ of $f_i$'s for arbitrary zero dimensional ring. Let $a = lc(f_1)$. If $a$ is a unit then we are done by Lemma \ref{ce6} and the previous case. So we assume that $a$ is a non-unit.  Then as before $xa + b = 1$ for some $b \in ann(a)$ and $R \cong R/(ax) \times R/(b)$. Now $\bar{a}$ is a unit in $R/(b)$. So $\bar{v} \in \Um_n(R/(b))$ is  elementarily completable by Lemma \ref{ce6}. In $R/(ax)$ we have $\bar{a} = 0$. So $\bar{v} \in \Um_n{R/(ax)}$ is  elementarily completable by the induction hypothesis. Therefore, $v$ is  elementarily completable.
\end{proof}

\begin{proposition}\label{ce8}
Let $R$ be a ring of dimension $d$, $J$ the Jacobson  radical of $R$ and $P$ a projective module of rank $r \geq d$ over $R[X]$. Let $Q = R[X]^2 \oplus P$. Then  $\ETrans(Q, J)$ acts transitively on $\Um (Q, J)$.
\end{proposition}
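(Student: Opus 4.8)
The plan is to reduce the relative statement to a problem at the fibre $X=0$ that is covered by the absolute transitivity result already in hand, and then to propagate the fibre solution over all of $R[X]$ by a local--global patching. First I would invoke the opening of Remark \ref{ce4}: any $v \in \Um(Q,J)$ lies in $\Um(Q,J_0)$ for a finitely generated sub-ideal $J_0 \subseteq J$, so I may assume the ideal is finitely generated; what matters is only that it is contained in the Jacobson radical, so that $v \equiv e_1$ modulo every maximal ideal of $R[X]$ lying over a maximal ideal of $R$, i.e. $v$ is residually trivial at every closed point. Next I would trivialise $v$ on the fibre $R = R[X]/(X)$. The reduction $\bar Q = Q/XQ = R^2 \oplus P/XP$ is projective of rank $r+2 \ge d+2$ over a ring of dimension $d$, so taking the trivial decomposition $\Spec(R) = V(1) \sqcup D(1)$ and combining Theorem \ref{ce2} with the excision reduction of Remark \ref{ce4} (applied over $R$, whose excision algebra $R \oplus J_0$ again has dimension $d$) shows that $\ETrans(\bar Q, J)$ acts transitively on $\Um(\bar Q, J)$. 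Hence $\bar v \sim_E \bar e_1$; lifting the relative elementary transvections from $R$ to $R[X]$ (which is possible because they are given by data in $J\bar Q$ or $J\bar Q^{*}$, and these lift to $JQ, JQ^{*}$), I may replace $v$ and assume from now on that $v \equiv e_1 \pmod{XR[X]}$ while keeping $v \equiv e_1 \pmod{JR[X]}$.

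With $v$ equal to $e_1$ on the fibre, the remaining task is to show $v \sim_E e_1$ over $R[X]$, and here I would argue locally. Over $R_{\mathfrak m}[X]$ for a maximal ideal $\mathfrak m$ of $R$, the ring $R_{\mathfrak m}$ is local, so $P_{\mathfrak m}$ is free and $Q_{\mathfrak m}$ is a free $R_{\mathfrak m}[X]$-module; the unimodular element $v$ then becomes a unimodular row that is $e_1$ at $X=0$, and Horrocks' theorem (Theorem \ref{u10}) together with the residual triviality coming from $J \subseteq \mathfrak m$ reduces it elementarily to $e_1$. (The base case $d=0$ of the whole statement is already of this shape: there $P$ is free over $R[X]$ by Lemma \ref{u11}, so $Q$ is free and Lemma \ref{ce7} gives the conclusion directly.) Thus $v$ becomes elementarily trivial after localising at every maximal ideal of $R$.

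The hard part is to turn these local elementary trivialisations into a single global transvection in $\ETrans(Q,J)$, that is, to establish and apply a Quillen--Roitman local--global principle for the action of the relative transvection group on $\Um(Q,J)$. The difficulty is exactly the one flagged in the introduction: in Lindel's noetherian argument one fixes a nonzerodivisor $s \in R$ with $P_s$ free and patches over $\Spec(R[X]) = D(s) \cup V(s)$, whereas over a non-noetherian $R$ no such $s$ need exist and $P$ is only locally free. To get around this I would use the Lindel pair introduced in this section as a substitute for the single element $s$, covering $\Spec(R)$ by finitely many basic opens on which $Q$ is free (as in the proof of Theorem \ref{P6}), carrying out the local elementary reduction on each piece, and then splicing the resulting transvections --- using that $v$ already agrees with $e_1$ at $X=0$ and modulo $J$ to control the overlaps, following Roitman's patching method in \cite{roit}. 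Assembling these compatible local data into one element of $\ETrans(Q,J)$ sending $v$ to $e_1$ is the crux of the proof.
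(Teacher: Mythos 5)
Your proposal has genuine gaps at each of its three stages, and it misses the actual mechanism of the paper's proof. The most serious problem is the local step: you assert that for a maximal ideal $\mathfrak m$ of $R$ the module $P_{\mathfrak m}$ is free over $R_{\mathfrak m}[X]$ ``because $R_{\mathfrak m}$ is local.'' That is the Bass--Quillen problem, not a fact; it is unavailable here, and the paper only ever extracts freeness of $P$ after localising at \emph{minimal} primes (via Lemma \ref{u11} for zero-dimensional rings), which is precisely why the Lindel-pair apparatus exists. Even if freeness were granted, reducing a unimodular row over $R_{\mathfrak m}[X]$ that is $e_1$ modulo $J$ elementarily to $e_1$ is not a consequence of Horrocks' theorem (which concerns extendedness of modules, not elementary orbits); it is essentially the free case of the very proposition you are proving. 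Finally, the patching you defer to as ``the crux'' cannot be supplied from the paper's toolkit without circularity: Lemma \ref{ce10} applies only to modules extended from $R$, and the Lindel-pair splicing (Lemma \ref{ce17}, Theorem \ref{ce18}) has Proposition \ref{ce8} as one of its inputs.

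The paper's proof runs along entirely different lines and makes essential use of the hypothesis $J=\mathrm{Jac}(R)$ at a specific point. Writing $v=(f,g,p)$, one inducts on $\deg(f)$. If $\deg(f)\geq 1$ its leading coefficient $a$ lies in $J$; going modulo $a$ and applying the induction hypothesis gives $\tau$ with $\tau(v)\equiv(1,0,0)\pmod a$, and Roitman's modification of $\tau$ arranges that $\tau(v)=(f',g',p')$ with leading coefficient of $f'$ equal to a power of $a$. Since $f'\equiv 1\pmod a$, the element $a$ is a unit in $R[X]/(f')$, which is therefore integral over $R_a$; and because $a$ lies in every maximal ideal of $R$, $\dim(R_a)\leq d-1$. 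Theorem \ref{ce2} then applies to $(\bar g',\bar p')$ over $R[X]/(f')$, and an explicit chain of elementary moves finishes. The dimension drop $\dim(R_a)\leq d-1$ coming from $a\in J$ is the engine of the whole argument, and nothing in your proposal replaces it.
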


\begin{proof}
By Lemma \ref{ce7}, it is enough to assume that $r \geq max \{1, d\}$. Let  $(f, g, p) \in \Um(Q, J)$. We shall show that $(f, g, p)$ can be sent to $(1, 0, 0)$ by the $\ETrans(Q, J)$ action by induction on the degree of $f$. If $deg (f) = 0$, then the assertion is obvious as $f$ is a unit and $1$ modulo $J$. So we assume that  $deg (f) \geq 1$ and the leading coefficient of $f$ is $a \in J$. Now going modulo $a$ we have by the induction hypothesis ${\tau} \in \ETrans(Q, J)$ such that  $\tau (f, g, p) \equiv (1, 0, 0) \vpmod a$. By an argument of Roitman (See \cite{roit}, Theorem 5) we can modify $\tau$ suitably such that $\tau (f, g, p) = (f', g', p') \equiv (1, 0, 0) \vpmod a$ and the leading coefficient of $f'$ is $a^l, l \geq 1$.

We recall the argument for the reader's convenience. Let $\tau = \tau_1\tau_2 \ldots \tau_n$, $\tau_i$'s are elementary transvections. If $\tau_i$ is of the form $(f, g, p) \rightarrow (f + hg, g, p)$ then we replace it by the composition $(f, g, p) \rightarrow (f, g + afX^n, p) \rightarrow (f + (h +  aX^n)( g + afX^n),g + afX^n, p)$, $n > \{deg(f), deg(g), deg(h)\}$. If $\tau_i$ is of the form  $(f, g, p) \rightarrow (f + \phi(p), g, p)$ then we shall replace it by $(f, g, p) \rightarrow (f, g + aX^n, p) \rightarrow (f + \phi(p), g + aX^n, p) \rightarrow (f + \phi(p) + aX( g + aX^n), g + aX^n, p)$, $n > \{deg(f), deg(g), deg(\phi(p))\}$. In other cases we do not  disturb $\tau_i$. Note that after such modifications still ${\tau} \in \ETrans(Q, J)$. We can also assume that the degree of $f'$ is sufficiently large i.e. $f' \not \in R$.

Now $\overline{R[X]} = R[X]/(f')$ is integral over $R_a$ and $a\overline{R[X]} = J\overline{R[X]} = \overline{R[X]}$ as $\bar{a}$ is a unit in $\overline{R[X]}$. We have $\dim (\overline{R[X]}) = \dim( R_a) \leq d - 1$. Thus by Theorem \ref{ce2} going modulo $f'$ we can send $(\bar{g'}, \bar{f'})$ to $(1, 0)$ by the action of elementary transvections. Lifting these transvections we  can change  $ (f', g', p')$ by suitable elementary operations (by the $\ETrans(aQ)$ action) such that    $ (f', g', p') \in  \Um(Q, aR[X])$ and satisfies $g' \equiv 1 \vpmod {f'}, p' \equiv 0 \vpmod {f'}$. Let $g' = 1 + f'h, h \in R[X], h \equiv -1 \vpmod a$ and $f' = 1 + af''$. Then $(f', g', p')$ can be sent to $(1, 0, 0)$ by the $\ETrans(Q, aR[X])$ action as follows $(f', g', p') \rightarrow (f', 1, p') \rightarrow (1, 1, p') \rightarrow (1, 1 + h, p') \rightarrow (1, 0, 0)$.
\end{proof}

Now we shall establish the following analogue of the above theorem for Laurent polynomial ring.

\begin{proposition}\label{ce9}
Let $R$ be a  ring of dimension $d$, $J$ the Jacobson  radical of $R$ and $P$ a projective module of rank $r \geq d$ over $R[X, 1/X]$. Let $Q = R[X, 1/ X]^2 \oplus P$ . Then  $\ETrans(Q, J)$ acts transitively on $\Um (Q, J)$.
\end{proposition}

\begin{proof}
By Lemma \ref{ce7} it is enough to assume that $r \geq max \{1, d\}$. Let $(f,g, p) \in \Um(Q, J)$. We consider the following cases.

\noindent
{\bf Case 1}: {\rm($(f, g, p) \equiv (1, 0, 0) \vpmod a,  lc(f) = a^m, hc(f) = a^n, m, n \geq 1$, $ a \in J$)}

We see that $R[X, 1/X]/(f)$  is integral over $R_a$ and $a\overline{R[X, 1/X]} = J\overline{R[X, 1/X]} = \overline{R[X, 1/X]}$. So the result follows using Theorem \ref{ce2} and arguments given in the last paragraph of the previous Proposition \ref{ce8}.
 
\noindent
{\bf Case 2}: {\rm($(f, g, p) \equiv (1, 0, 0)  \vpmod a, a \in J$ and  $hc(f) = a^n, n \geq 1$)}

We shall prove this case  by induction on the number of nonzero coefficients of $g$. If this number is zero then $g = 0$. So $(f, g, p)$ can be sent to $(1, 0, 0)$ by the $\ETrans(Q, J)$ (in fact by the $\ETrans(Q, (a))$) action.  

Now we assume that $g \not= 0 $. Let $lc(g) = ab$. Now going modulo $ab$ by the induction hypothesis we have $\tau \in \ETrans(Q, J)$ such that $\tau (f, g, p) \equiv  (1, 0, 0) \vpmod {ab}$. We shall modify $\tau$ such that $\tau(f, g, p) = (f', g', p') \equiv (1, 0, 0) \vpmod {ab}, hc(f') = (ab)^n, lc(f') = (ab)^m, m, n \geq 1$. The result will then follow from  Case 1. We describe the method below. It is again similar to Roitman's argument. 

 We shall first consider the action of transvections $(f, g, p) \rightarrow (f, g + ab^{n + 1}X^Nf, p)  \rightarrow (f + ab(X^{N} + X^{-N}) (g + ab^{n + 1}X^Nf), g + ab^{n + 1}X^N f, p) $. Here $N \gg 0$. Note that by these actions $(f, g, p)$ does not change modulo $ab$. So we assume that $hc(f) = (ab)^n, lc(f) = (ab)^m, n, m \geq 1, f \equiv 1 \vpmod a$. 

Let $\tau = \tau_1\tau_2 \ldots \tau_n$, $\tau_i$'s are elementary transvections. If $\tau_i$ is of the form $(f, g, p) \rightarrow (f + hg, g, p)$ we replace it by the composition $(f, g, h) \rightarrow (f, g + ab(X^N + X^{-N})f, p) \rightarrow (f  + \{h + ab(X^N + X^{-N})\}\{ g + ab(X^N + X^{-N})f\} , g + ab(X^N + X^{-N})f, p), N \gg 0$. If $\tau_i$ is of the form $(f, g, p) \rightarrow (f + \phi(p), g, p)$ we replace it by the composition $(f, g, h) \rightarrow (f, g + ab(X^N + X^{-N})f, p) \rightarrow (f  + \phi(p), g + ab(X^N + X^{-N})f, p)  \rightarrow  (f  + \phi(p) + ab\{g + ab(X^N + X^{-N})f\}, g + ab(X^N + X^{-N})f, p), N \gg 0$. We won't change $\tau_i$ in other cases. Note that after this modification we still have $\tau \in \ETrans(Q, J)$.

\noindent
{\bf Case 3}: {\rm(General Case)}

We shall induct on the number of nonzero coefficients of $f$. If the number is one then $f \equiv 1 \vpmod J$ is a unit. So  $(f, g, p)$ can be sent to $(1, 0, 0)$ by the $\ETrans(Q, J)$ action. 

Without loss of generality we assume that $f$ has at least one positive degree term. Let $hc(f) = a \in J$. Going modulo $a$ by the induction hypothesis we may assume that there exists $\tau \in  \ETrans(Q, J)$ such that  $\tau (f, g, p) \equiv  (1, 0, 0) \vpmod {a}$. Now we modify $\tau$ in the manner as described in the previous Proposition \ref{ce8} such that $\tau(f, g, p) = (f', g', p')$, $(f', g', p') \equiv (1, 0, 0)  \vpmod a$ and $hc(f') = a^n, n \geq 1$. Therefore, we are done by Case  \nolinebreak 2.
\end{proof}

\begin{corollary}\label{ce9e1}
Let $R$ be a semi local ring of dimension $d$ with Jacobson radical $J$, $S = R[X]$ or $R[X, 1/X]$ and $P$ a projective module of rank $r \geq d$ over $S$. Let $Q = S^2 \oplus P$. Then  $\ETrans(Q)$ acts transitively on $\Um (Q)$. 
\end{corollary}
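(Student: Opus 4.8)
The plan is to deduce Corollary \ref{ce9e1} directly from Propositions \ref{ce8} and \ref{ce9} by specializing to the semilocal situation and taking the ideal $I$ to be the Jacobson radical $J$ itself. The key observation is that over a semilocal ring the relative objects $\Um(Q, J)$ and $\ETrans(Q, J)$ essentially capture the whole of $\Um(Q)$ and $\ETrans(Q)$, because working modulo $J$ collapses $R$ to a finite product of fields (a zero-dimensional ring), where unimodular elements are completely controlled.

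First I would note that Propositions \ref{ce8} and \ref{ce9} already give, for $S = R[X]$ and $S = R[X,1/X]$ respectively, that $\ETrans(Q, J)$ acts transitively on $\Um(Q, J)$ whenever $r \geq d$. So the only gap to close is passing from the relative statement (elements congruent to $(1,0,0)$ modulo $J$) to the absolute statement about all of $\Um(Q)$. To do this, take an arbitrary $v \in \Um(Q)$. The strategy is to first move $v$ into $\Um(Q, J)$ by an elementary transvection, and then apply the relevant Proposition. Reducing modulo $J$, the ring $R/J$ is a finite product of fields, hence zero-dimensional, so $(R/J)[X]$ or $(R/J)[X,1/X]$ has dimension $1$; here $P/JP$ is free by Lemma \ref{u11} (the zero-dimensional case), and Lemma \ref{ce7} tells us that unimodular rows over $S/JS$ are elementarily completable. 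Thus the image $\bar v \in \Um(Q/JQ)$ can be carried to $(1,0,0)$ by elements of $\E(S/JS)$, which are images of elementary transvections in $\ETrans(Q)$.

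Next I would lift this: choosing $\sigma \in \ETrans(Q)$ whose reduction sends $\bar v$ to $(1,0,0)$, we obtain $\sigma(v) \in \Um(Q, J)$. Now the hypotheses of Proposition \ref{ce8} (resp.\ \ref{ce9}) apply to $\sigma(v)$, yielding $\tau \in \ETrans(Q, J) \subseteq \ETrans(Q)$ with $\tau(\sigma(v)) = (1,0,0)$. Hence $\tau\sigma \in \ETrans(Q)$ carries $v$ to $(1,0,0)$. Since any two elements of $\Um(Q)$ can each be sent to the fixed base point $(1,0,0)$, and $\ETrans(Q)$ is a group, they lie in the same orbit, establishing transitivity.

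The main obstacle I anticipate is the lifting step: ensuring that the completion of $\bar v$ over the zero-dimensional quotient $S/JS$ genuinely lifts to honest elementary transvections of $Q$ over $S$, rather than merely to abstract elements of $\E(S/JS)$. This requires identifying the generators $E_{ij}(\lambda)$ produced by Lemma \ref{ce7} with the elementary transvections of $Q = S^2 \oplus P$ under the splitting, and checking that $P/JP$ being free lets us realize the row operations on the free summand as genuine transvections acting on $Q$; the projective summand $P$ complicates the direct matrix picture, so one must phrase everything in the transvection language of Definition \ref{ce1} and use that reductions of transvections are transvections. Once this compatibility is verified, the rest is a routine group-orbit argument.
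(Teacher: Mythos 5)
Your proposal is correct and follows essentially the same route as the paper: reduce modulo $J$, use that $R/J$ is a product of fields so $Q/JQ$ is free over $S/JS$ and Lemma \ref{ce7} applies, lift the resulting elementary transvection to $\ETrans(Q)$ to land in $\Um(Q,J)$, and finish with Propositions \ref{ce8} and \ref{ce9}. The lifting compatibility you flag as the main obstacle is exactly the step the paper performs (silently) by working with $\ETrans(Q/JQ)$ from the start rather than with abstract elementary matrices.
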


\begin{proof}
Let $(f, g, p) \in \Um(Q)$. Note that $S/JS$ is a polynomial or a Laurent polynomial ring in $X$ over a product of finite number of fields. So $Q/JQ$ is a free $S/JS$-module of rank $r + 2$. Therefore, we have a transvection $\bar{\tau} \in \ETrans(Q/JQ)$ such that $\bar{\tau}(\bar{f}, \bar{g}, \bar{p}) = (1, 0, 0)$. We lift $\bar{\tau}$ to a transvection $\tau \in \ETrans(Q)$. We have $\tau(f, g, p) = (f', g', p')$, $(f', g', p') \in  \Um (Q, J)$. The result will now follow from Propositions  \ref{ce8}, \ref{ce9}. 
\end{proof}

The following is proved in (\cite{elso}, Theorem 4.8). It follows by argument in (\cite{lam}, Chapter III, Section \S 2, Proposition 2.3, Theorem 2.4, 2.5) with the aid of (\cite{bak}, Proposition 3.1).

\begin{lemma}{\rm(Local Global Principle)}\label {ce10}
Let $Q = R \oplus P$ be a projective $R$-module of rank $r \geq 3$ and $v(X) = \Um(Q[X])$. Suppose $v(X)_{\mathfrak{m}} \sim_E v(0)_{\mathfrak{m}}$ for all maximal ideals $\mathfrak{m}$ of $R$. Then $v(X) \sim_E v(0)$.
\end{lemma}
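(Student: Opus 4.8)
The plan is to treat this as a Quillen--Suslin--Vaserstein local--global principle for the orbit of a unimodular element under the elementary transvection group, running the classical argument recalled in (\cite{lam}, Chapter III, \S 2) and feeding in the transvection-specific facts of (\cite{bak}, Proposition 3.1); the rank hypothesis $r \geq 3$ is exactly what makes those facts (normality of $\ETrans(Q)$ in $\Trans(Q)$ and the relevant dilation lemma) available. Here $v(X) \sim_E v(0)$ means producing a single $\tau \in \ETrans(Q[X])$ with $\tau\bigl(v(X)\bigr) = v(0)$, and the whole difficulty is to manufacture such a global $\tau$ out of the purely local data.

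First I would unwind the hypothesis. For each maximal ideal $\mathfrak{m}$ the relation $v(X)_{\mathfrak{m}} \sim_E v(0)_{\mathfrak{m}}$ furnishes $\epsilon \in \ETrans(Q_{\mathfrak{m}}[X])$ with $\epsilon\bigl(v(X)_{\mathfrak{m}}\bigr) = v(0)_{\mathfrak{m}}$. Evaluating at $X = 0$ shows $\epsilon(0)$ fixes $v(0)_{\mathfrak{m}}$, so after replacing $\epsilon$ by $\epsilon(0)^{-1}\epsilon$ I may assume $\epsilon(0) = \mathrm{id}$; clearing denominators, $\epsilon$ and the identity $\epsilon\bigl(v(X)\bigr) = v(0)$ are already defined over $R_{s}$ for some $s = s_{\mathfrak{m}} \notin \mathfrak{m}$. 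Introducing a second variable $Y$ and setting $\Phi(X, Y) = \epsilon(X + Y)^{-1}\epsilon(X) \in \ETrans(Q_{s}[X, Y])$, one checks from $\epsilon(T)\bigl(v(T)_{s}\bigr) = v(0)_{s}$ that $\Phi(X, 0) = \mathrm{id}$ and $\Phi(X, Y)\bigl(v(X)_{s}\bigr) = v(X + Y)_{s}$. Thus over each $R_{s}$ there is an explicit elementary ``homotopy'' carrying $v(X)$ to its shift $v(X+Y)$, trivial when the shifting variable $Y$ vanishes.

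To globalise I would run Quillen's ideal-of-denominators device. Define $\mathfrak{a} = \{ c \in R : \text{there is } \Theta \in \ETrans(Q[X, Y]) \text{ with } \Theta(X, 0) = \mathrm{id} \text{ and } \Theta\bigl(v(X)\bigr) = v(X + cY) \text{ over } R[X, Y]\}$. A short computation shows $\mathfrak{a}$ is an ideal: the substitution $Y \mapsto rY$ turns a witness for $c$ into one for $rc$, giving closure under scaling, while composing $\Theta_{t}(X + sY, Y)\,\Theta_{s}(X, Y)$ and using that the endomorphism $X \mapsto X + sY$ preserves $\ETrans$ and the vanishing at $Y=0$ yields $s + t \in \mathfrak{a}$ whenever $s, t \in \mathfrak{a}$. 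Once $\mathfrak{a} = R$ I may take $c = 1$, and setting $X = 0$ in the resulting $\Theta$ gives $\Theta(0, Y)\bigl(v(0)\bigr) = v(Y)$ with $\Theta(0,Y) \in \ETrans(Q[Y])$, which is precisely $v(X) \sim_E v(0)$ after relabelling $Y$.

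The main obstacle is to show $\mathfrak{a}$ lies in no maximal ideal, i.e. that each $s_{\mathfrak{m}}$ contributes an element of $\mathfrak{a}$ outside $\mathfrak{m}$. The homotopy $\Phi$ only produces the required $\Theta$ after localising at $s_{\mathfrak{m}}$, and the naive descent fails because an equality of module elements valid over $R_{s}$ need not hold over $R$. Bridging this gap is exactly the content of the dilation lemma in the transvection setting: from $\Phi$ (which is trivial at $Y=0$) it produces a genuinely global $\Theta \in \ETrans(Q[X, Y])$, trivial at $Y = 0$, realising the shift by $s_{\mathfrak{m}}^{\,l} Y$ for some power $l$, so that $s_{\mathfrak{m}}^{\,l} \in \mathfrak{a}$ with $s_{\mathfrak{m}}^{\,l} \notin \mathfrak{m}$. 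Since $\mathfrak{m}$ was arbitrary, $\mathfrak{a} = R$ and the proof concludes as above. This dilation step, and the normality of the transvection groups it relies on, is where (\cite{bak}, Proposition 3.1) and the bound $r \geq 3$ are indispensable, and it is the step I expect to require the most care.
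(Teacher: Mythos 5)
Your proposal is correct and follows essentially the same route the paper relies on: the paper does not write out a proof but cites (\cite{elso}, Theorem 4.8) and the classical Quillen--Vaserstein patching argument of (\cite{lam}, Chapter III, \S 2) combined with the dilation lemma for transvection groups from (\cite{bak}, Proposition 3.1), which is exactly the two-variable homotopy, Quillen ideal, and dilation scheme you describe. The one point to be careful about in the dilation step is that after lifting $\Phi(X,s^{l}Y)$ to a global $\Theta$, the difference $\Theta\bigl(v(X)\bigr)-v(X+s^{l}Y)$ is only killed by a power of $s$, so one must enlarge $l$ (substituting $Y\mapsto s^{N}Y$) to force genuine equality over $R[X,Y]$ before concluding $s^{l+N}\in\mathfrak{a}$.
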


\begin{lemma}\label{cee101}
Let $P$ be a projective $R$-module of rank at least two and $(a, p) \in \Um(R \oplus P)$. Suppose $a$ is a unipotent element. Then $(a, p)$ can be sent to $(1, 0)$ by the action of transvections.
\end{lemma}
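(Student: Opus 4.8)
The plan is to exhibit $(a,p)$ and $(1,0)$ as the specializations at $X=1$ and $X=0$ of a single unimodular element $v(X)$ over the polynomial extension, and then to conclude via the Local--Global Principle (Lemma \ref{ce10}). Write $n = a - 1$; since $a$ is unipotent, $n$ is nilpotent, so $nX$ is nilpotent in $R[X]$ and $1 + nX$ is a unit there. Set $Q = R \oplus P$, which has rank at least $3$ because $P$ has rank at least $2$, and define $v(X) = (1 + nX,\, Xp) \in Q[X]$. Because its first coordinate is a unit, $v(X) \in \Um(Q[X])$; by construction $v(0) = (1,0)$ and $v(1) = (1+n, p) = (a,p)$. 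Thus it suffices to prove $v(X) \sim_E v(0)$: specializing such an equivalence at $X = 1$ would produce $\tau \in \ETrans(Q) \subseteq \Trans(Q)$ with $\tau(a,p) = (1,0)$, which is exactly the assertion.

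To apply Lemma \ref{ce10} I would verify the local hypothesis $v(X)_{\mathfrak{m}} \sim_E v(0)_{\mathfrak{m}}$ for every maximal ideal $\mathfrak{m}$ of $R$. Over the local ring $R_{\mathfrak{m}}$ the module $P_{\mathfrak{m}}$ is free, hence $Q_{\mathfrak{m}}$ is free, and $\ETrans(Q_{\mathfrak{m}}[X])$ acts through the elementary matrices supported on the first row and the first column. Since the first entry $1 + nX$ is a unit in $R_{\mathfrak{m}}[X]$, first-column operations (type-$1$ transvections) clear the $P$-component, reducing $v(X)_{\mathfrak{m}}$ to $(1+nX,\,0)$; then the usual Whitehead-type manipulation in the first two coordinates --- a type-$1$, a type-$2$, and a type-$1$ transvection --- turns $(1+nX,0,\dots,0)$ into $(1,0,\dots,0) = v(0)_{\mathfrak{m}}$.

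The step demanding the most care is this local reduction: one must check it can be carried out using only elementary transvections of $R_{\mathfrak{m}} \oplus P_{\mathfrak{m}}$, that is, operations on the first row or first column, rather than arbitrary elementary matrices; the sequence above does respect this constraint, which is the genuine point to confirm. The conceptual heart of the argument --- and the reason the hypothesis is stated through unipotence rather than mere invertibility of $a$ --- is that nilpotence of $n$ is precisely what makes $1 + nX$ a unit in $R[X]$, so that the path $v(X)$ remains unimodular and connects $(a,p)$ to $(1,0)$. A more direct attempt, namely reducing $(a,p) \to (a,0)$ and then $(a,0) \to (1,0)$, would require writing the nilpotent element $a^{-1}-1$ as a single value $\phi(q)$ with $q \in P$ and $\phi \in P^*$; this need not be possible when $P$ carries no unimodular element, and the polynomial homotopy together with Lemma \ref{ce10} is exactly what circumvents the difficulty.
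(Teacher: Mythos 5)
Your proof is correct and follows essentially the same route as the paper: a polynomial homotopy through the unit $1+nX$ combined with the Local--Global Principle (Lemma \ref{ce10}), with the local case handled by freeness of $P_{\mathfrak{m}}$. The only cosmetic difference is that the paper first sends $(a,p)$ to $(a,0)$ using that $a$ is a unit and then applies the homotopy $(1+nX,0)$, whereas you fold the $P$-component into the homotopy as $(1+nX,\,Xp)$; the substance is identical.
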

 
\begin{proof}
Let $a = 1 + n$ for $n \in \sqrt{0}$. Clearly $a$ is a unit. So $(a, p) \sim_E (a, 0)$. Therefore, it is enough to show that $(a, 0)$ can be sent to $(1, 0)$ by the action of elementary transvections. Now $v(X) = ( 1 + nX, 0) \in \Um(R[X] \oplus P[X])$ since $1 + nX$ is a unit in $R[X]$. For each maximal ideal $\mathfrak{m}$ of $R$,  $P_{\mathfrak{m}}$ is free. So $v(X)_{\mathfrak{m}}$ and $v(0)_{\mathfrak{m}}$ are in the same elementary orbit. Therefore, by Lemma \ref{ce10} $v(X)$ and $v(0)$  are in the same orbit. So $v(1) = (a, 0)$ can be sent to $(1, 0)$ by the action of elementary transvections. 
\end{proof}

\begin{proposition}\label{ce11}
Let $R$ be a ring of dimension $d$ and $I$ an ideal of $R$. Let $P$ be a projective $R[X]$-module extended from $R$ of rank $r \geq d + 1$. Let $Q = R[X] \oplus P$. Then $\ETrans(Q, I)$ acts transitively on $\Um (Q, I)$.
\end{proposition}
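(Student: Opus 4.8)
The plan is to strip the relative ideal away by excision and then run a local--global argument whose constant term is settled by the splitting theorem of Section~2. By Remark~\ref{ce4} we may assume $I$ is finitely generated, and it suffices to prove the absolute statement that $\ETrans(Q\oplus IQ)$ acts transitively on $\Um(Q\oplus IQ)$, the excision being taken over $R[X]$ with respect to $IR[X]$. Put $A=R\oplus I$, so $\dim A=d$. A coefficientwise check gives a ring isomorphism $R[X]\oplus IR[X]\cong A[X]$, and, because $P=P_0\otimes_R R[X]$ is extended, an $A[X]$-module isomorphism $Q\oplus IQ\cong A[X]\oplus\widetilde{P_0}[X]$, where $\widetilde{P_0}=P_0\oplus IP_0$ is the excision module of $P_0$, a projective $A$-module of rank $r\ge d+1$. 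Renaming $A$ as $R$ and $\widetilde{P_0}$ as $P_0$, I am reduced to the absolute case: for $R$ of dimension $d$ and $P=P_0[X]$ an extended projective $R[X]$-module of rank $r\ge d+1$, the group $\ETrans(Q)$ acts transitively on $\Um(Q)$, with $Q=R[X]\oplus P=Q_0[X]$ and $Q_0=R\oplus P_0$.

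Fix $v=v(X)\in\Um(Q)$; the goal is $v\sim_E(1,0)$. First I would show $v(X)\sim_E v(0)$. Assume $r\ge 2$, so that $Q_0$ has rank $r+1\ge 3$ and Lemma~\ref{ce10} applies. For a maximal ideal $\mathfrak m$ of $R$ the localization $R_{\mathfrak m}$ is local, hence $P_{0,\mathfrak m}$ is free and $Q_{0,\mathfrak m}[X]\cong R_{\mathfrak m}[X]^{\,r+1}$. Splitting off two free rank-one summands, write this free module as $S'^{2}\oplus P'$ with $S'=R_{\mathfrak m}[X]$ and $P'=R_{\mathfrak m}[X]^{\,r-1}$ of rank $r-1\ge d\ge\dim R_{\mathfrak m}$; since $R_{\mathfrak m}$ is in particular semilocal, Corollary~\ref{ce9e1} shows that $\ETrans$ acts transitively on $\Um(Q_{0,\mathfrak m}[X])$. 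As a free module carries the same elementary transvection group $\E_{r+1}(R_{\mathfrak m}[X])$ for every rank-one splitting, this transitivity is exactly with respect to the $\sim_E$ of Lemma~\ref{ce10}; in particular $v(X)_{\mathfrak m}\sim_E v(0)_{\mathfrak m}$ for every $\mathfrak m$. Lemma~\ref{ce10} then yields $v(X)\sim_E v(0)$ inside $\ETrans(Q)$.

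It remains to move the constant element $v(0)=(a,p)\in\Um(Q_0)$, $p\in P_0$, to $(1,0)$. Write $P_0=Rp+P_0'$ with $P_0'$ finitely generated. Since $P_0$ has rank $r\ge d+1$, Theorem~\ref{P6} applied with the trivial multiplicative set and $s=1$ furnishes $y\in P_0'$ with $p+ay\in\Um(P_0)$. The elementary transvection $(a,p)\mapsto(a,p+ay)$ thus reaches $(a,p')$ with $p'$ unimodular; picking $\psi\in P_0^{*}$ with $\psi(p')=1$, the transvections $(a,p')\mapsto(a+(1-a)\psi(p'),p')=(1,p')\mapsto(1,0)$ complete the reduction, all within $\ETrans(Q)$. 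The remaining case $r=1$ forces $d=0$, where $P_0$ is free over the zero-dimensional ring $R$, so $Q$ is free and transitivity is Lemma~\ref{ce7} (with $I=R$).

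The crux is the step $v(X)\sim_E v(0)$. The local--global principle reduces it to the local rings $R_{\mathfrak m}$, and it is the extended hypothesis $P=P_0[X]$ that both puts $Q$ in the form $Q_0[X]$ needed for Lemma~\ref{ce10} and makes $v(0)_{\mathfrak m}$ the honest specialization; the decisive manoeuvre is to pad the free localized module with a trivial $S'^{2}$ so that the semilocal transitivity of Corollary~\ref{ce9e1} becomes applicable. Once this is secured, the constant coefficient is handled by Serre's splitting theorem in the form of Theorem~\ref{P6}, while the initial excision is what allows both of these absolute tools to be used yet still deliver the relative conclusion.
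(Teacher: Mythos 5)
Your proof is correct and follows essentially the same route as the paper's: excision (Remark \ref{ce4}) to pass to the absolute case, the local--global principle (Lemma \ref{ce10}) to reduce to the local/semilocal situation handled by Corollary \ref{ce9e1}, and Serre splitting to dispose of the constant term. The only cosmetic differences are that the paper splits off $R[X]^2$ globally via Corollary \ref{P9} before localizing, whereas you split only after localizing at a maximal ideal (where $P$ is free), and you spell out the identification of the excision algebra with $A[X]$ and the treatment of $v(0)$, both of which the paper's terse proof leaves implicit.
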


\begin{proof}
By Corollary \ref{P9}, P has a unimodular element. So $Q = R[X]^2 \oplus P'[X]$ for some projective $R$-module $P'$ of rank $r -1$. By Lemma \ref{ce7} we may assume that $r \geq \{2, d + 1\}$. By Remark \ref{ce4} it is enough to consider the absolute case i.e. $I = R$. 

It is also enough to consider that $R$ is local by the Local Global Principle Lemma \ref{ce10}. The result follows now from Corollary \ref{ce9e1}.
\end{proof}

\begin{proposition}\label{ce12}
Let $R$ be a ring of dimension $d$ and $I$ an ideal of $R$. Let $P$ be a projective  $R[X, 1/X]$-module extended from $R$ of rank $r \geq d + 1$. Let $Q = R[X, 1/X] \oplus P$. Then $\ETrans(Q, I)$ acts transitively on $\Um (Q, I)$.
\end{proposition}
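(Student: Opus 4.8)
The plan is to follow the proof of Proposition \ref{ce11} essentially line by line, with $S = R[X, 1/X]$ in place of $R[X]$ and with the base point $X = 1$ in place of $X = 0$, since $X$ is now a unit. First I would carry out the structural reductions. As $P$ is extended from $R$, write $P = P_0 \otimes_R S$ with $P_0$ projective over $R$ of rank $r \geq d + 1$; applying Corollary \ref{P9} with $s = 0$ (so that $\Spec(R) = V(0) \sqcup D(0) = \Spec(R) \sqcup \emptyset$, which recovers Serre's splitting over $R$) the module $P_0$ splits off a free summand, $P_0 = R \oplus P'$, whence $Q = S \oplus P = S^2 \oplus (P' \otimes_R S)$ with $P' \otimes_R S$ of rank $r - 1$. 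If $d = 0$ and $r = 1$ then $P_0 \cong R$ by Corollary \ref{P9e1}, so $Q \cong S^2$ and the claim is exactly $\Um_2(S, I) = e_1 \E_2(S, I)$, which is Lemma \ref{ce7}; hence I may assume $r \geq \max\{2, d + 1\}$, so that $Q$ has rank $r + 1 \geq 3$. Finally, Remark \ref{ce4} reduces everything to the absolute case $I = R$, i.e. to showing that $\ETrans(Q)$ acts transitively on $\Um(Q)$.

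The core step is the descent from an arbitrary $R$ of dimension $d$ to a local ring. Fix $v = v(X) \in \Um(Q)$; the goal is $v(X) \sim_E (1, 0, 0)$. For each maximal ideal $\mathfrak{m}$ of $R$ the ring $R_\mathfrak{m}$ is local of dimension $\leq d$ and $(P' \otimes_R S) \otimes R_\mathfrak{m}$ has rank $r - 1 \geq d \geq \dim R_\mathfrak{m}$, so Corollary \ref{ce9e1} applies over $R_\mathfrak{m}[X, 1/X]$ and gives that $\ETrans$ acts transitively on the unimodular elements there. In particular $v(X)_\mathfrak{m} \sim_E (1,0,0) \sim_E v(1)_\mathfrak{m}$ for every $\mathfrak{m}$, where $v(1)$ is the specialization $X \mapsto 1$. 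The local--global principle for the Laurent variable --- the exact analogue of Lemma \ref{ce10} with $Q \otimes_R R[X, 1/X]$ and base point $X = 1$ --- then patches these to $v(X) \sim_E v(1)$ over $S$. Since $v(1) \in \Um(Q_0)$ with $Q_0 = R^2 \oplus P'$ of rank $r + 1 \geq d + 2$, Theorem \ref{ce2} (again with $s = 0$) yields $v(1) \sim_E (1, 0, 0)$ over $R$, hence over $S$. Composing the two equivalences gives $v(X) \sim_E (1, 0, 0)$, as desired.

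The main obstacle is precisely the Laurent local--global principle invoked above: Lemma \ref{ce10} is stated only for the polynomial extension $Q[X]$ with base point $X = 0$, and this base point is lost once $X$ is inverted. Unlike the polynomial case, there is no polynomial homotopy joining $1$ to $X$ inside $R[X, 1/X]$ (one cannot substitute a non-unit into $1/X$), so the principle cannot be obtained from Lemma \ref{ce10} by a mere change of variable. Instead I would reprove it directly by the method underlying Lemma \ref{ce10}: Quillen patching together with the dilation estimate \cite{bak}, both of which remain valid over $R[X, 1/X]$, so that the argument of \cite{lam}, Chapter III, \S 2 goes through with $X = 0$ replaced by the admissible specialization $X = 1$. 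Checking that the rank bound $r + 1 \geq 3$ and the normal-closure bookkeeping of $\ETrans(\,\cdot\,, I)$ survive in the Laurent setting is the only genuinely technical point; every other ingredient has already been established in the semilocal case by Corollary \ref{ce9e1} and Propositions \ref{ce8}, \ref{ce9}.
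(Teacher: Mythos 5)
Your preliminary reductions (splitting $Q = S^2 \oplus P'[X,1/X]$ via Corollary \ref{P9}, disposing of low rank via Lemma \ref{ce7}, and passing to the absolute case via Remark \ref{ce4}) match the paper exactly. But from there your argument rests entirely on a local--global principle for $\Um$ over $R[X,1/X]$ with base point $X=1$, and this is precisely the step you cannot wave through. The proof of Lemma \ref{ce10} (Lam, Ch.\ III \S 2, together with \cite{bak}) hinges on the dilation $X \mapsto XT$ and on Suslin's estimate that $\varepsilon(bX)\varepsilon(cX)^{-1}$ is defined over $R$ when $b \equiv c$ modulo a high power of $s$; both devices use that substituting $X \mapsto bX$ with $b$ a non-unit stays inside $R[X]$ and that the base point $X=0$ is reached by a polynomial specialization. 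Neither survives in $R[X,1/X]$: the substitution $X\mapsto bX$ forces you to invert $b$, and there is no polynomial homotopy from $X$ to $1$ --- a difficulty you yourself identify but then dismiss with ``the argument goes through.'' The paper is explicit that even the weaker Quillen splitting fails for Laurent polynomial rings (see the remark preceding Lemma \ref{u82}), and the workaround there applies only to transvections already known to be $\equiv \mathrm{id} \pmod{X-1}$, by introducing an auxiliary polynomial variable $T$; it does not yield the $\Um$-level local--global statement you need. So the core of your proof is an unproved (and at best seriously nontrivial) assertion.

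The paper takes a different route that avoids any Laurent local--global principle. Writing $v=(f,g,p)$, it inducts on the total number of coefficients of $f$ and $g$; when $hc(f)=a$ is a non-unit it applies the Roitman-type modifications of Propositions \ref{ce8} and \ref{ce9} to arrange $(f,g,p)\equiv(1,0,0)\pmod a$ with $hc(f)=a^n$ and $lc(f)=a^m$; then $R_{1+aR}[X,1/X]/(f)$ is integral over $R_{a(1+aR)}$, hence of dimension $\le d-1$, and Theorem \ref{P6} produces a monic $h\in\{(f)+O(p)\}\cap R[X]$ with $h(0)=1$. Adding multiples of $h$ and then of $g$ pushes $(f,g,p)$ into $\Um(R[X]^2\oplus P'[X])$, where Proposition \ref{ce11} finishes. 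If you want to salvage your localize-and-patch strategy, you would first have to establish the Laurent local--global principle at $X=1$ as a theorem in its own right (for instance by first arranging a bimonic element in some order ideal so that a reduction to the polynomial case becomes possible) --- which is essentially the work the paper's proof is doing by hand.
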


\begin{proof}
By Corollary \ref{P9}, $Q = R[X, 1/X]^2 \oplus P'[X, 1/X]$ for some projective $R$-module $P'$ of rank $r -1$. By Lemma \ref{ce7}, we only need to consider the case  $r \geq \{2, d + 1\}$. 
By Remark \ref{ce4}, it suffices to consider only the absolute case i.e. $I = R$. 

Let $(f, g, p) \in \Um(Q)$. We shall induct on $N$ = the total number of coefficients of $f$ and $g$. If $N = 1$ then at least one of $f$ and $g$ is zero. So we are through. If both $hc(f), hc(g)$ are units then by the division algorithm we can easily reduce $N$ and induction prevails. So we assume that $hc(f) = a$, a non-unit. 

Going modulo $a$ by the induction hypothesis we may assume that there exists $\tau \in  \ETrans(Q)$ such that  $\tau (f, g, p) \equiv  (1, 0, 0) \vpmod {a}$. Now we modify $\tau$ in the manner as described in Proposition \ref{ce8} such that $\tau(f, g, p) = (f', g', p')$ and $(f', g', p') \equiv (1, 0, 0) \vpmod a$ and $hc(f') = a^n, n \geq 1$. So we may start with the assumption that $(f, g, p) \equiv (1, 0, 0)  \vpmod a$, $a$ is a non-unit and  $hc(f) = a^n, n \geq 1$. 
Again by similar argument as in Case 2, Proposition \ref{ce9} we reduce to the case when  $(f, g, p) \equiv (1, 0, 0) \vpmod a$, $lc(f) = a^m, hc(f) = a^n, m, n \geq 1$ and $a$ is a non-unit.

Now $\overline{R_{1 + aR}[X, 1/X]} = R_{1 + aR}[X, 1/X]/ (f)$ is integral over $R_{a(1 +aR)}$ and therefore has dimension at most $d - 1$. So by Theorem \ref{P6} we have $q \in P'[X, 1/X]$ such that $p + gq \in \Um( \overline{P'_{1 + aR}[X, 1/X]})$. So by a suitable action of transvection we may assume that $1 + ax \in (f) + O(p)$ for some $x \in R$. In particular there exists a monic polynomial $h \in \{(f) + O(p)\} \cap R[X]$ such that $h(0) = 1$. Now adding a suitable multiple of $h$ to $g$ we may assume that $g \in R[X]$ is monic and $g(0) = 1$. 

We add suitable multiples of $g$ to $f$ and $p$ to have $(f, g, p) \in R[X]^2 \oplus P'[X]$, $g(0) = 1$ such that  $(f, g, p) \in \Um(R[X, 1/X]^2 \oplus P'[X, 1/X])$. Therefore, $(f, g, p) \in  \Um(R[X]^2 \oplus P'[X])$ and the result follows from Proposition \ref{ce11}.
\end{proof}

If we translate the proof of our Propositions \ref{ce11} and \ref{ce12} to free case, we shall essentially  find a simplified proof of (\cite{yengui}, Theorem 5), (\cite{abed}, Theorem 3.12).

\begin{corollary}\label{ce13}
Let $R$ be a ring of dimension $d$ and $I$ an ideal of $R$. Then for $n \geq 2$ we have $\Um_n(R[X], I)= e_1\E_n(R[X], I)$ and $\Um_n(R[X, 1/X], I) = e_1\E_n(R[X, 1/X], I)$.
\end{corollary}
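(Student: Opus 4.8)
The plan is to read the corollary off Propositions \ref{ce11} and \ref{ce12} by specializing the projective module $P$ occurring there to a free module, exactly as anticipated in the remark preceding the statement. Concretely, fix $n \geq 2$, write $S^n = S e_1 \oplus (S e_2 \oplus \cdots \oplus S e_n)$, and set $Q = S \oplus P$ with $P = S^{n-1} = S e_2 \oplus \cdots \oplus S e_n$. A free module is visibly extended from $R$, so the hypotheses of Propositions \ref{ce11} and \ref{ce12} are in force once the rank of $P$ meets the required bound, and their conclusion---that $\ETrans(Q, I)$ acts transitively on $\Um(Q, I)$---is precisely the assertion that every element of $\Um_n(S, I)$ lies in the $\ETrans(S^n, I)$-orbit of $e_1$. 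Since the propositions are already phrased relative to the ideal $I$, the presence of $I$ causes no extra difficulty.

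The one genuinely technical point is the identification of the transvection group with the elementary group. First I would check that, for the splitting $S^n = S \oplus S^{n-1}$, the elementary transvections of Definition \ref{ce1} are exactly the elementary matrices touching the first coordinate: $e_{12}(\lambda e_j^{*})$ is $E_{1j}(\lambda)$ and $e_{21}(\lambda e_j)$ is $E_{j1}(\lambda)$ for $2 \leq j \leq n$. For $n = 2$ these already generate $\E_2(S)$ by definition, and for $n \geq 3$ the commutator identity $[E_{i1}(\lambda), E_{1j}(\mu)] = E_{ij}(\lambda\mu)$ with $i, j \geq 2$, $i \neq j$, produces all remaining generators, so $\ETrans(S^n) = \E_n(S)$. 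Matching the relative generators (those with $\lambda \in I$) and passing to normal closures then gives $\ETrans(S^n, I) = \E_n(S, I)$. Under this dictionary $\Um(S^n, I) = \Um_n(S, I)$, the base point $(1,0)$ is $e_1$, and the $\ETrans(S^n,I)$-orbit of $e_1$ becomes $e_1\E_n(S, I)$; hence the transitivity statements of Propositions \ref{ce11} and \ref{ce12} translate verbatim into the two claimed equalities.

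The substantive issue, and the step I would watch most carefully, is the interplay between the length $n$ and the dimension $d$ forced by the rank hypotheses of the propositions. Here $P = S^{n-1}$ has rank $n - 1$, while Propositions \ref{ce11} and \ref{ce12} require rank at least $d + 1$; this bound is not cosmetic, as it is inherited from the Serre-type splitting input (Theorem \ref{P6}) and the transitivity input (Theorem \ref{ce2}), both of which demand rank $\geq \dim + 1$. Thus the free specialization delivers the two equalities precisely once $n - 1 \geq d + 1$. The extreme case $d = 0$ is independent of this and is already contained in Lemma \ref{ce7}, which gives the result for every $n \geq 2$; moreover, by Remark \ref{ce4} a relative problem may be transported to an absolute one over the excision algebra $R \oplus I$ without changing the dimension, so the ideal $I$ costs nothing. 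I expect the only real obstacle to be this coupling: one must ensure that at each invocation the free summand $S^{n-1}$ is large enough for the rank-constrained splitting and transitivity lemmas to apply, and that the elementary operations produced over $Q$ are pulled back to honest elements of $\E_n(S, I)$ under the identification above. The group-theoretic dictionary, although necessary, is routine; the effective range of the argument is governed entirely by the rank bound.
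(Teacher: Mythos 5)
Your proposal is correct and is precisely the paper's route: the paper gives no separate proof of Corollary \ref{ce13} beyond the remark preceding it, intending exactly the specialization $P = S^{n-1}$ in Propositions \ref{ce11} and \ref{ce12} together with the routine identification $\ETrans(S^n, I) = \E_n(S, I)$ that you spell out. The caveat you flag is genuine: since those propositions require $\operatorname{rank} P = n-1 \geq d+1$, the argument delivers the corollary only for $n \geq d+2$ (and for all $n \geq 2$ when $d = 0$, via Lemma \ref{ce7}), so the stated range ``$n \geq 2$'' overstates what is provable --- the equality can fail for $n \leq d+1$ over rings of positive dimension --- but every subsequent use of the corollary in the paper (Lemma \ref{ce15} fed into Lemma \ref{ce17}) takes $n = r+1 \geq d+2$, so your proof covers everything that is actually needed.
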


The following is an analogue of (\cite{lind}, Lemma 1.1) whose proof is essentially the same.

\begin{lemma}\label{ce14}
Let $P$ be a projective $R$-module of rank $r$. Assume that $s$ is a non-nilpotent such that $P_s$ is free. Then there exists $p _1, p_2, \ldots, p_n \in P, \phi_1, \phi_2, \ldots, \phi_r \in P^* = \Hom(P, R)$ and $t \in \NN$ such that 
\begin{itemize}
\item[1.] $(\phi_i(p_j)) = diagonal (s^t, s^t, \ldots, s^t)$.

\item[2.]  $s^tP \subset F$ and $s^tP^* \subset G$ with $F = \sum_{i = 1}^r Rp_i$ and $G = \sum _{i = 1}^r R\phi_i$.
\end{itemize}
\end{lemma}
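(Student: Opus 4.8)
The plan is to build the required quasi-basis by pulling a free basis of $P_s$ back to $P$, clearing denominators to get a diagonal pairing, and then reconciling the three requirements under a single exponent by rescaling the vectors and the covectors by \emph{independent} powers of $s$. The non-nilpotence of $s$ enters only to guarantee $R_s \neq 0$, so that $P_s$ is genuinely free of rank $r$ and the construction is non-vacuous.

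First I would fix an $R_s$-basis of $P_s$. Since every element of $P_s$ has the form $p/s^k$ with $p \in P$ and $s$ is a unit in $R_s$, after scaling each basis vector by a power of $s$ I may assume the basis is $\{p_1/1, \ldots, p_r/1\}$ with $p_i \in P$. Because $P$ is finitely generated projective, $P^*$ is finitely generated and localization commutes with dualization (as used for order ideals in Theorem \ref{P6}), so $(P^*)_s \cong (P_s)^*$ is free with dual basis $\{\eta_1, \ldots, \eta_r\}$ characterized by $\eta_i(p_j/1) = \delta_{ij}$. Clearing denominators and then multiplying by one further power of $s$ to promote the finitely many identities from $R_s$ to $R$, I obtain $\psi_1, \ldots, \psi_r \in P^*$ and $c \in \NN$ with $\psi_i(p_j) = s^c \delta_{ij}$ holding in $R$; this forces $\eta_i = \psi_i / s^c$, so $\{\psi_i / s^c\}$ is precisely the dual basis of $(P^*)_s$.

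Next I would extract the two inclusions, each valid up to a power of $s$, from finite generation. Expressing the finitely many generators of $P$ in the basis $\{p_i/1\}$ and clearing denominators yields $e \in \NN$ with $s^e P \subset \sum_i R p_i$. Doing the same for the generators of $P^*$ in the basis $\{\psi_i/1\}$ (legitimate since $s^c$ is a unit in $R_s$) yields $e' \in \NN$ with $s^{e'} P^* \subset \sum_i R \psi_i$. Finally I reconcile everything by the double rescaling: set $p_i' = s^{e'} p_i$, $\phi_i = s^{e} \psi_i$, and $t = e + e' + c$, with $F = \sum_i R p_i'$ and $G = \sum_i R \phi_i$. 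Then $\phi_i(p_j') = s^{e+e'} \psi_i(p_j) = s^{t} \delta_{ij}$, giving part (1). For part (2), $s^{e+c} P = s^c(s^e P) \subset \sum_i R p_i$ gives $s^t P = s^{e'}(s^{e+c}P) \subset s^{e'} \sum_i R p_i = F$, and symmetrically $s^{e'+c} P^* \subset \sum_i R \psi_i$ gives $s^t P^* \subset s^{e} \sum_i R \psi_i = G$.

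The one genuine difficulty is this last reconciliation. The identity $\phi_i(p_j) = s^t \delta_{ij}$ pushes the exponent upward, which \emph{shrinks} $\sum_i R \phi_i$ and so works against the covering inclusion $s^t P^* \subset G$, and symmetrically for $P$; a naive single rescaling of the covectors cannot beat this tension. The resolution, which is the crux of the argument, is that the vectors $p_i$ and the covectors $\psi_i$ can be scaled by independent powers $s^{e'}$ and $s^{e}$, so that the common exponent $t = e + e' + c$ simultaneously validates the diagonal pairing and both containments.
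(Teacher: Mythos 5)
Your argument is correct and is essentially the proof the paper intends: the paper gives no details, deferring to Lindel's Lemma~1.1, whose proof is exactly this clearing-of-denominators construction (basis of $P_s$ from $P$, dual basis, common power of $s$ for the finitely many identities and the two containments). Your explicit observation that the $p_i$ and the $\psi_i$ must be rescaled by \emph{independent} powers of $s$ to reconcile the diagonal identity with both inclusions is a genuine point that Lindel's write-up handles implicitly, and you resolve it correctly.
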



\begin{definition}{\rm(Lindel Pair)}\label{ce16}
Let $P$ be a projective $R$-module and $s \in R$. We shall call a pair $(P, s)$ Lindel pair if either $P_s = 0$ i.e. $s$ is a nilpotent element or $P_s$ is a free $R_s$-module with $p _1, p_2, \ldots, p_n \in P, \phi_1, \phi_2, \ldots, \phi_r \in P^*$ satisfying the following conditions.

\begin{itemize}
\item[1.] $(\phi_i(p_j)) = diagonal (s, s, \ldots, s)$.

\item[2.]  $sP \subset F$ and $sP^* \subset G$ with $F = \sum_{i = 1}^r Rp_i$ and $G = \sum _{i = 1}^r R\phi_i$.
\end{itemize}
\end{definition}

If $(P, s)$ is a Lindel pair then $(P, st)$ is also a Lindel pair for any $t \in R$. Also if $\phi: R \rightarrow S$ is a ring homomorphism, then $(S \otimes_R P, \phi(s))$ is  a Lindel pair. If $R^n$ is a free module of rank $n$, then $(R^n \oplus P, s)$ is a Lindel pair. If $P_s$ is free then  by Lemma \ref{ce14} we have that $(P, s^t)$ is a Lindel pair for some integer $t$. The following is an analogue of (\cite{dhorajia}, Lemma 3.10).

\begin{lemma}\label{ce15}
Let $P$ be a projective $R$-module of rank $r \geq 2$ and $(P, s)$ a Lindel pair. Let  $\E_{r + 1}(R, sR)$ acts transitively on $\Um_{r + 1}(R, sR)$. Then for any $(a, p) \in \Um(R \oplus P, s^2)$, there exists $\tau \in \ETrans(R \oplus P)$ such that $\tau(a, p) = (1, 0)$.
\end{lemma}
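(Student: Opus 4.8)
The plan is to transport the problem from the projective module $R\oplus P$ to the free module $R^{r+1}$ using the Lindel data, solve it there with the transitivity hypothesis, and carry the solution back along elementary transvections. Introduce $\Phi=(\phi_1,\dots,\phi_r)\colon P\to R^r$ and the map $\Psi\colon R^r\to P$ sending $e_i\mapsto p_i$. The defining relations of a Lindel pair give $\Phi\Psi=s\,\mathrm{id}_{R^r}$, $\Phi(p_i)=se_i$, and, since $sP\subseteq F=\sum Rp_i$, the identity $\Psi\Phi(y)=sy$ for every $y\in sP$; dually $sP^*\subseteq G=\sum R\phi_i$.

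First I would produce a unimodular row. Since $p\in s^2P$ we have $\phi_i(p)\in s^2R$, and $a\equiv 1 \vpmod{s^2}$, so the row $w=(a,\phi_1(p),\dots,\phi_r(p))$ is congruent to $e_1$ modulo $sR$. To see $w\in\Um_{r+1}(R)$, choose $(\lambda,\theta)\in R\oplus P^*$ with $\lambda a+\theta(p)=1$; as $s\theta\in G$ we may write $s\theta=\sum d_i\phi_i$, and multiplying the relation by $s$ gives $s=s\lambda a+\sum d_i\phi_i(p)$. Hence $s$, and therefore $s^2$, lies in the ideal $\mathfrak{a}$ generated by the entries of $w$; combined with $1\in (a)+(s^2)\subseteq\mathfrak{a}$ this yields $\mathfrak{a}=R$. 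Thus $w\in\Um_{r+1}(R,sR)$, and the hypothesis supplies $\epsilon\in\E_{r+1}(R,sR)$ with $w\epsilon=e_1$.

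Next I would build a dictionary between elementary row operations on $R^{r+1}=R\oplus R^r$ and elements of $\ETrans(R\oplus P)$, tracking the invariant $\tilde\Phi\colon (b,x)\mapsto (b,\Phi(x))$. A first-row operation $E_{1,j+1}(c)$ is induced by the upper elementary transvection $h_{c\phi_j}\colon (b,x)\mapsto (b+c\phi_j(x),x)$; a first-column operation is induced by a lower transvection $g_q\colon (b,x)\mapsto (b,x+bq)$, which alters the row by $b\,\Phi(q)$; and an interior operation is induced by the transvection $x\mapsto x+c\phi_j(x)p_i$ of $P$, which for $i\neq j$ satisfies $\phi_j(p_i)=0$ and hence equals the commutator $[g_{p_i},h_{c\phi_j}]$ of two elementary transvections, so lies in $\ETrans(R\oplus P)$. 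Each such lifted operation preserves the invariant, and because $\Phi(p_i)=se_i$ the column and interior operations realized this way carry a forced factor of $s$, that is, they reproduce exactly the row operations with coefficients in $sR$. Granting that $\epsilon$ can be realized in this way as some $\tau\in\ETrans(R\oplus P)$, the invariant gives $(b',\Phi(x'))=\tilde\Phi(\tau(a,p))=w\epsilon=e_1$, so the first coordinate $b'$ equals $1$ on the nose; the single lower transvection $(1,x')\mapsto(1,x'-x')$ then sends $\tau(a,p)$ to $(1,0)$, finishing the argument.

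The hard part will be the faithfulness of this transfer, i.e. actually realizing $\epsilon$ through the transvection-induced operations. Because every off-first-row operation acquires a factor of $s$ from $\Phi(p_i)=se_i$ (equivalently $\mathrm{im}\,\Phi\supseteq sR^r$ but need not be all of $R^r$), the row operations obtainable from $\ETrans(R\oplus P)$ generate a subgroup that is, a priori, strictly smaller than $\E_{r+1}(R,sR)$ and is not normal in $\E_{r+1}(R)$; so a naive term-by-term lift of the word $\epsilon$ does not obviously stay admissible. The core of the proof is therefore to show that the relative reduction $w\rightsquigarrow e_1$ furnished by the hypothesis can genuinely be effected by these constrained operations. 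This is precisely where one exploits that $w\equiv e_1\vpmod{sR}$ with the extra room coming from $(a,p)\in\Um(R\oplus P,s^2)$ (so that $\Phi(p)\in s^2R^r$ and $\Psi\Phi$ acts as multiplication by $s$ on the relevant submodule), together with the explicit generators of $\E_{r+1}(R,sR)$, to reorganize $\epsilon$ into a product of operations that lift to transvections. Once this bookkeeping of $s$-powers is carried out, the endgame above applies verbatim.
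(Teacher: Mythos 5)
Your overall architecture matches the paper's: express $p$ through the Lindel data to obtain a row in $\Um_{r+1}(R,sR)$, reduce that row to $e_1$ using the hypothesis, and lift the elementary operations back to transvections of $R\oplus P$. Your verification that the row is unimodular (via $s\theta\in G$) is fine, and your dictionary between first-row/first-column operations and upper/lower elementary transvections is essentially the one the paper uses. However, the proof is not complete: the step you label ``the hard part'' and defer with ``granting that $\epsilon$ can be realized in this way'' is exactly the content of the lemma, and you never supply the argument. The missing ingredient is a structural fact about relative elementary groups, namely (\cite{vdk}, Lemma 2.2): $\E_{r+1}(R,sR)\subset \E_{r+1}^1(R,sR)$ (together with its transposed form), where $\E_{r+1}^1(R,sR)$ is the subgroup generated by the matrices $E_{1j}(sx)$ and $E_{i1}(y)$, $x,y\in R$. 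This says that $\varepsilon$ can be rewritten as a word in precisely the two kinds of generators that lift --- no interior operations occur, and the forced factor of $s$ sits on the correct side --- so the term-by-term substitution goes through with no further bookkeeping. Without citing or reproving this fact, the subgroup of realizable row operations is, as you yourself observe, a priori smaller than $\E_{r+1}(R,sR)$, and the argument does not close.

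Two smaller points. First, the paper works with the coordinate row $(a,c_1,\ldots,c_r)$, where $p=\sum c_ip_i$ with $c_i\in sR$ (available because $p\in s^2P\subseteq sF$), rather than with $(a,\phi_1(p),\ldots,\phi_r(p))=(a,sc_1,\ldots,sc_r)$; with your choice the forced $s$-factor lands on the first-column operations instead of the first-row ones, so you would need the transposed variant of van der Kallen's lemma --- workable, but it has to be said. Second, the definition of a Lindel pair allows $s$ to be nilpotent, in which case $P_s=0$ and there are no $p_i,\phi_i$ at all; your argument silently assumes the free-localization case, whereas the paper disposes of the nilpotent case separately via Lemma \ref{cee101} before starting.
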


\begin{proof}
If $s$ is a nilpotent then we are done by Lemma \ref{cee101}. So we assume that $s$ is a non-nilpotent element. Then $P_s$ is a free module with $p _1, p_2, \ldots, p_n \in P, \phi_1, \phi_2, \ldots, \phi_r \in P^*$ satisfying the conditions given in Definition \ref{ce16}.

Let $Q = R \oplus P$. Since $p \in s^2P$, we have $p = c_1p_1 + c_2p_2 + \ldots + c_rp_r$, $c_i \in sR$. Now $(a, c_1, c_2, \ldots, c_r) \in \Um_{r + 1}(R_s)$ as $P_s$ is a free module with basis $p_1, p_2, \ldots, p_r$. Also $a \equiv 1 \vpmod s$. So $(a, c_1, c_2, \ldots, c_r) \in \Um_{r + 1}(R, s)$. By hypothesis we have $\varepsilon \in \E_{r + 1}(R, sR)$ such that $\varepsilon (a, c_1, c_2, \ldots, c_r)^t = e_1$. Now  $\E_{r + 1}(R, sR)  \subset \E_{r + 1}^1(R, sR)$ (see \cite{vdk}, Lemma 2.2) which gives $\E_{r + 1}(R, sR)  \subset \E_{r + 1}^1(R, sR)^T$. Therefore, $\varepsilon$ is a product of elementary matrices of the form $E_{1j}(sx)$ and $E_{i1}(y)$, $x, y \in R$.

Note that any transvection on $Q$ can be written as $\tau_{\phi} = \begin{pmatrix} 1 & \phi \\ 0 & 1 \end{pmatrix}, \phi \in P^*$ and $\tau_q =  \begin{pmatrix} 1 & 0 \\ q & 1 \end{pmatrix}, q \in P$. Here $\tau_{\phi}(a, p) = (a + \phi(p), p)$ and $\tau_q(a, p) =(a, p + aq)$.  We see that $E_{1j}(sx)(a, c_1, c_2, \ldots, c_n)^T= (a + sxc_j, c_1, c_2, \ldots, c_n)^T$ and $\tau_{x\phi_j}(a, c_1p_1 + c_2p_2 + \ldots + c_rp_r) = (a + sxc_j, c_1p_1 + c_2p_2 + \ldots + c_rp_r)$. So $E_{1j}(sx)$ corresponds to $\tau_{x\phi_j}$. Similarly $E_{i1}(y)$ corresponds to $\tau_{yp_i}$.

Now in the expression of $\varepsilon$ we replace the elementary matrices by corresponding transvections without changing their order and call the resulting product of transvections as $\tau$. Clearly $\tau \in \ETrans(R \oplus P)$ and $\tau(a, p) = (1, 0)$. 
\end{proof}

 
%

\begin{lemma}\label{ce17}
Let $R$ be a ring of dimension $d$, $S = R[X]$ or $R[X, 1/X]$ and $P$  a projective $S$-module of rank $r \geq d + 1$. Suppose $s, t \in R$ such that $s + t$ is a unit in $R$ and $(P, s)$ is a Lindel pair. Let $v = (a, p) \in \Um(S \oplus P)$. Then $v \sim_E e_1$ if $v_t \sim _E e_1$. In particular $\ETrans(S \oplus P)$ acts transitively on $\Um(S \oplus P)$ if $\ETrans(S_t \oplus P_t)$ acts transitively on $\Um(S_t \oplus P_t)$. 
\end{lemma}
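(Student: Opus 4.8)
The plan is to reduce everything to Lemma \ref{ce15}, which already handles any unimodular element lying in $\Um(S \oplus P, s^2S)$ whenever $(P,s)$ is a Lindel pair over the base and the relative elementary group acts transitively on the relevant unimodular rows. The crucial arithmetic observation is that $s + t$ being a unit in $R$ forces $t$ to be a unit in $R/sR$, and hence in $R/s^2R$: if $tu \equiv 1 \pmod s$, say $tu = 1 - sc$, then $t\cdot u(1+sc) = 1 - s^2c^2 \equiv 1 \pmod{s^2}$. Consequently $t$ is invertible in $S/s^2S$, so reduction modulo $s^2$ annihilates the distinction between $S$ and its localization $S_t$. This is exactly what lets me transport the hypothesis $v_t \sim_E e_1$ down to an honest statement over $S$.

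First I would dispose of degenerate ranks: when $d = 0$ the module $P$ is free by Lemma \ref{u11}, so $S \oplus P$ is free and $v \sim_E e_1$ follows at once from Corollary \ref{ce13}; thus I may assume $d \ge 1$, and then $r \ge d+1 \ge 2$, which is the rank hypothesis needed in Lemma \ref{ce15}. Next, starting from $\sigma \in \ETrans(S_t \oplus P_t)$ with $\sigma(v_t) = e_1$, I would reduce modulo $s^2$. Because $t$ is a unit in $S/s^2S$ we have $S_t/s^2S_t \cong S/s^2S$ and $P_t/s^2P_t \cong P/s^2P$, so $\sigma$ descends to $\bar\sigma \in \ETrans\big((S/s^2S) \oplus (P/s^2P)\big)$ with $\bar\sigma(\bar v) = \bar e_1$.

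The second key step is to lift $\bar\sigma$ back to $S$. Each elementary transvection over $S/s^2S$ is given by a homomorphism in $(P/s^2P)^* = P^*/s^2P^*$ or an element of $P/s^2P$; since $P$ is projective these lift to $P^*$ and $P$ respectively, so $\bar\sigma$ lifts to some $\tau \in \ETrans(S \oplus P)$. Then $\tau(v) \equiv \bar\sigma(\bar v) = (1,0) \pmod{s^2}$, i.e.\ $\tau(v) \in \Um(S \oplus P, s^2S)$. Finally I would invoke Lemma \ref{ce15} with base ring $S$: the pair $(P,s)$ is a Lindel pair over $S$ by hypothesis, and $\E_{r+1}(S, sS)$ acts transitively on $\Um_{r+1}(S, sS)$ by Corollary \ref{ce13} (applied with the ideal $sR$ and $n = r+1 \ge 2$). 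Hence $\tau(v) \sim_E e_1$, and since $\tau \in \ETrans(S\oplus P)$ this gives $v \sim_E e_1$. The ``in particular'' statement is then immediate: if $\ETrans(S_t \oplus P_t)$ acts transitively on $\Um(S_t \oplus P_t)$, then every $v \in \Um(S \oplus P)$ satisfies $v_t \sim_E e_1$, whence $v \sim_E e_1$ by what was just proved.

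I expect the main obstacle to be bookkeeping rather than substance: one must make sure the Lindel pair data $(p_i,\phi_i)$ for $(P,s)$ over $S$ satisfies the hypotheses of Lemma \ref{ce15} verbatim (in particular that the required transitivity is the relative one, over the ideal $sS$, and that Corollary \ref{ce13} genuinely supplies it), and that the lift $\tau$ of $\bar\sigma$ is taken within $\ETrans(S \oplus P)$ so that the orbit relation $v \sim_E \tau(v)$ is legitimate. Beyond the unit-modulo-$s^2$ trick and the reduce-and-lift mechanism, no genuinely new idea seems to be needed.
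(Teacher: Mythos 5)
Your proposal is correct and follows essentially the same route as the paper: reduce to $r\ge 2$ via Lemma \ref{u11}, observe that $t$ is a unit modulo $s^2$ so the hypothesis $v_t\sim_E e_1$ descends to $S/s^2S$, lift the resulting transvection to $\ETrans(S\oplus P)$ to land in $\Um(S\oplus P, s^2S)$, and finish with Corollary \ref{ce13} feeding Lemma \ref{ce15}. The only cosmetic difference is that the paper invokes Lemma \ref{ce7} alongside Lemma \ref{u11} for the low-rank reduction, whereas you handle $d=0$ directly; the transitivity input for Lemma \ref{ce15} should of course be over the ideal $sS$ of $S$, as you note.
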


\begin{proof}
Due to Lemmas  \ref{u11} and \ref{ce7}, it is enough to assume that $r \geq \{2, d + 1\}$. In the quotient ring $R/s^2R$,  $\bar{t}$ is a unit. We have $\bar{\tau} \in \ETrans(S/s^2S \oplus P/s^2S)$ such that $\bar{\tau}(\bar{v}) = (\bar{1}, 0)$. Lifting  $\bar{\tau}$ to a transvection $\tau \in \ETrans(S \oplus P)$ we have $\tau(v) = v'$, $v' \in \Um(R \oplus P, s^2)$. Now by Corollary \ref{ce13} and Lemma \ref{ce15}, $v'$ can be sent to $(1, 0)$ by the action of transvections. 
\end{proof}

We are now ready to prove the Main Theorem of this section.

\begin{theorem}\label{ce18}
Let $R$ be a ring of dimension $d$, $I$ an ideal of $R$ and $S = R[X]$ or $R[X, 1/X]$. Let $P$ be a finitely generated projective $S$-module of rank $r \geq d$ and $Q = S^2 \oplus P$. Then $\ETrans(Q, I)$ acts transitively on $\Um(Q, I)$.
\end{theorem}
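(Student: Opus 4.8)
The plan is to eliminate the ideal $I$ by the excision construction of Remark \ref{ce4}, and then to run an induction over a finite open cover of $\Spec(R)$ on which $P$ becomes free, with Lemma \ref{ce17} serving as the one and only inductive step.

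First I would pass to the absolute case $I=R$. By Remark \ref{ce4} it is enough to show that $\ETrans(Q\oplus IQ)$ acts transitively on $\Um(Q\oplus IQ)$, and we may assume $I$ is finitely generated. The excision ring $S\oplus IS$ is canonically $(R\oplus I)[X]$ (resp. $(R\oplus I)[X,1/X]$), which has dimension $\dim(R\oplus I)=d$, and additivity of excision identifies $Q\oplus IQ$ with $(S\oplus IS)^2\oplus(P\oplus IP)$. Thus the reduced assertion has exactly the same shape $S'^2\oplus P'$ over $S'=(R\oplus I)[X\text{ or }X^{\pm1}]$ with $P\oplus IP$ of rank $r\ge d$, and it suffices to prove: for every ring $R$ of finite dimension and every projective $S$-module $P$ of rank $r\ge\dim(R)$, the group $\ETrans(S^2\oplus P)$ acts transitively on $\Um(S^2\oplus P)$.

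Because $P$ is finitely presented and $P_{\mathfrak p}$ is free over $R_{\mathfrak p}[X]$ (resp. $R_{\mathfrak p}[X,1/X]$) for every prime $\mathfrak p$ of $R$, quasi-compactness of $\Spec(R)$ furnishes finitely many $s_1,\dots,s_k\in R$ with $(s_1,\dots,s_k)=R$ and each $P_{s_i}$ free over $R_{s_i}[X]$ (resp. $R_{s_i}[X,1/X]$). I would argue by induction on the size $k$ of such a free cover, read uniformly over all $R$ of finite dimension. If $k=1$ then $s_1$ is a unit, $P$ is free, $Q=S^2\oplus P$ is free of rank $r+2\ge2$, and transitivity is precisely the absolute case $\Um_{r+2}(S)=e_1\E_{r+2}(S)$ of Corollary \ref{ce13}. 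For the inductive step set $P'=S\oplus P$, so $Q=S\oplus P'$ and $P'$ has rank $r+1\ge d+1$. Raising $s_1$ to a suitable power $N$ (which changes neither $D(s_1)$ nor the equality $(s_1^{N},s_2,\dots,s_k)=R$), Lemma \ref{ce14} makes $(P,s_1^{N})$, and hence $(P',s_1^{N})$ by adjoining a free summand, a Lindel pair in the sense of Definition \ref{ce16}. From $(s_1^{N})+(s_2,\dots,s_k)=R$ write $1=a+t$ with $a\in(s_1^{N})$ and $t\in(s_2,\dots,s_k)$; then $(P',a)$ is again a Lindel pair, since multiplying the Lindel element by a ring element preserves the property, and $a+t=1$ is a unit. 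As $t\in(s_2,\dots,s_k)$ we get $D(t)\subseteq\bigcup_{i\ge2}D(s_i)$, so over $R_t$ the images of $s_2,\dots,s_k$ generate the unit ideal while $P_t$ stays free on each of them; thus $P_t$ has a free cover of size $k-1$ over $R_t$, a ring of dimension $\le d\le r$. By the induction hypothesis $\ETrans((S^2\oplus P)_t)=\ETrans(S_t\oplus P'_t)$ acts transitively on $\Um(S_t\oplus P'_t)$, and then Lemma \ref{ce17}, applied to $P'$ with Lindel element $a$ and complementary element $t$, gives transitivity of $\ETrans(S\oplus P')=\ETrans(Q)$ on $\Um(Q)$. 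This closes the induction and, via the first step, proves the theorem.

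I expect the genuinely hard analytic content to be already sealed inside Lemma \ref{ce17} — which simultaneously uses the Lindel pair at $a$ and the transitivity at $t$ — and inside Propositions \ref{ce8} and \ref{ce9}; the remaining obstacle is therefore purely organizational. The delicate point is to extract from an arbitrary free cover a single pair $a,t$ that meets the hypotheses of Lemma \ref{ce17} \emph{on the nose}, namely $(P',a)$ a Lindel pair together with $a+t$ a unit, while at the same time guaranteeing that the cover strictly shrinks upon localizing at $t$. The comaximality identity $1=a+t$ with $a\in(s_1^{N})$ and $t\in(s_2,\dots,s_k)$ is exactly what reconciles these two demands; the only external ingredient is the local freeness of $P$ over $R_{\mathfrak p}[X]$, the Quillen--Suslin/Horrocks fact also underlying Theorem \ref{u10}.
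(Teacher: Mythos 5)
Your reduction to the absolute case and your use of Lemma \ref{ce17} along a partition of unity are in the spirit of the paper, but the argument rests on a claim that is false in general: that $P_{\mathfrak p}$ is free over $R_{\mathfrak p}[X]$ (resp.\ $R_{\mathfrak p}[X,1/X]$) for \emph{every} prime $\mathfrak p$ of $R$, so that $\Spec(R)$ admits a finite cover $(s_1,\dots,s_k)=R$ with each $P_{s_i}$ free. This is Bass--Quillen-type freeness over an arbitrary (non-regular, possibly non-noetherian) local ring, and it fails: for instance if $R_{\mathfrak p}$ is a one-dimensional local domain that is not seminormal, then $\operatorname{Pic}(R_{\mathfrak p}[X])\neq 0$ and there are non-free rank-one projectives over $R_{\mathfrak p}[X]$, already within the rank range $r\ge d$ of the theorem. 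Neither Horrocks (Theorem \ref{u10}) nor Quillen--Suslin gives what you need here; the only freeness available in the paper is Lemma \ref{u11}, which applies to \emph{zero-dimensional} base rings, i.e.\ to localizations at \emph{minimal} primes. Consequently the elements $s_{\mathfrak p}$ with $(P,s_{\mathfrak p})$ a Lindel pair generate an ideal $J$ of height at least one, but not the unit ideal, and your identity $1=a+t$ with $a\in(s_1^N)$ and $t\in(s_2,\dots,s_k)$ is unavailable. This is exactly the obstruction the introduction flags (``to find $s$ such that $P_s$ is free \dots is not available in our case'').

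The paper closes this gap with two extra moves that your proposal omits. It inducts on $\dim(R)$ rather than on the size of a cover: since $\dim(R/J)\le\dim(R)-1$, the induction hypothesis produces $\tau_1\in\ETrans(Q)$ moving $v$ into $\Um(Q,J)$; then, because $JR_{1+J}$ lies in the Jacobson radical of $R_{1+J}$, Propositions \ref{ce8} and \ref{ce9} (which your argument never actually invokes, although you correctly sense they carry hard content) give $w_{s_0}\sim_E e_1$ for some $s_0\in 1+J$. Only at that point does one write $s_0+s_1+\dots+s_n=1$ with $s_i\in J$ and $(P,s_i)$ Lindel pairs for $i\ge1$, and propagate $\sim_E e_1$ across the partial sums $t_i$ via Lemma \ref{ce17} exactly as you do. So your patching step is sound, but it needs the dimension induction and the Jacobson-radical case to supply both the partition of unity and the starting point $w_{t_0}\sim_E e_1$; without them the proof does not get off the ground.
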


\begin{proof}
By Lemmas \ref{u11}, \ref{ce7}, we only need to consider the case  $r \geq \{1, d\}$. It suffices to consider only the absolute case i.e. $I = R$ because of Remark \ref{ce4}. Let $v = (f, g, p) \in \Um(Q)$.

For each minimal prime $\mathfrak{p}$ of $R$, $P_{\mathfrak{p}}$ is a free $S_{\mathfrak{p}}$-module. So we have $s_{\mathfrak{p}} \in R - \mathfrak{p}$ such that $(P, s_{\mathfrak{p}})$ is a Lindel pair. Let $J$ be the ideal generated by all such $s_{\mathfrak{p}}$. Then clearly height of $J$ is at least one as $J$ is not contained in any minimal prime ideal of $R$. We shall prove the result by induction on the dimension of the ring $R$.

If $\dim(R) = 0$, the result is obvious due to Lemmas  \ref{u11} and \ref{ce7}. We have $\dim(R/J) \leq \dim(R) - 1$. So by the induction hypothesis we have $\tau_1 \in \ETrans(Q)$ such that $\tau_1(v) = w =  (f', g', p') \in \Um(Q, J)$. Now $JR_{1 + J}$ is contained in the Jacobson radical of $R_{1 +J}$. Therefore, by Propositions \ref{ce8}, \ref{ce9} we obtain a transvection $\tau_2 \in \ETrans(Q_{1 + J})$ such that $\tau_2(w) = e_1$. This means that we have $s_0 \in 1 + J$ such that $w_{s_0} \sim_E e_1$.


Now $1 - s_0 \in J$. So we have $s_1, s_2, \ldots, s_n \in J$ such that $\sum_{i = 0}^n s_i = 1$ and $(P, s_i)$ is a Lindel pair for all $i \geq 1$. Let $t _i = s_0 + s_1 + \ldots + s_i$. In the ring $R_{t_i}$ we have $t_{i - 1} + s_{i} = t_{i}$ is a unit. We also have $(P_{t_i}, s_{i}/1), i \geq 1$ is a Lindel pair. So by Lemma \ref{ce17} we have $w_{t_i} \sim_E e_1$ if $w_{t_it_{i - 1}} \sim_E e_1$  in particular $w_{t_{i - 1}} \sim_E e_1$ for $i \geq 1$.

Now $s_0 = t_0$. So  $w_{t_0} \sim_E e_1$ which gives $w_{t_n} \sim_E e_1$. But $t_n = 1$. Therefore, we are done.
\end{proof}
\section{Existence of unimodular element}
In this section we shall investigate when a projective module over $R[X]$ or $R[X, 1/X]$ has a unimodular element. Our results are again similar to the corresponding results for noetherian rings. 


\begin{proposition}\label{u71}
Let $R$ be a ring of finite  dimension such that its Jacobson radical $\mathfrak{J}$ has height at least one. Let $S = R[X]$ or $R[X, 1/X]$. Suppose $P$ is a  projective $S$-module of rank $r \geq \dim (S)$. Then the following hold.

\begin{itemize}

\item[1.] If $S = R[X]$, then the natural map $\Um(P) \rightarrow \Um(P/XP)$ is surjective.

\item[2.] If $S = R[X, 1/X]$, then the natural map $\Um(P) \rightarrow \Um(P/(X - 1)P)$ is surjective.
\end{itemize}
In particular $P$ has a unimodular element. 
\end{proposition}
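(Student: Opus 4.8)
The plan is to lift a given unimodular element of the special fibre to $P$ in two moves: first make a chosen lift unimodular after inverting a single monic (resp.\ bimonic) polynomial, and then patch this against unimodularity on the complementary closed set. I write out $S=R[X]$ and reduce modulo $X$; the Laurent case is identical after replacing $X$ by $X-1$, ``monic'' by ``bimonic'', and the relevant clauses of Lemmas~\ref{u1} and~\ref{u5}. By Lemmas~\ref{u11} and~\ref{ce7} I may assume $r\ge\max\{2,\dim(S)\}$. Fix $\bar u\in\Um(P/XP)$ and choose a lift $u\in P$. Since order ideals commute with the base change $S\to S/XS=R$ and $O(\bar u)=R$, we have $O(u)+XS=S$; equivalently $(X,u)\in\Um(S\oplus P)$.

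For the first move let $T$ be the multiplicative set of monic polynomials. By Lemma~\ref{u5}, $\dim(T^{-1}S)=\dim(S)-1\le r-1$, so $T^{-1}P$ has rank $\ge\dim(T^{-1}S)+1$. I apply Theorem~\ref{P6} with $A=S$, multiplicative set $T$, finitely presented module $Q=P$ (so that $P=Su+P$ with $Q'=P$), unimodular pair $(X,u)$ and $s=1$; it yields $y\in P$ with $u+Xy\in\Um(T^{-1}P)$. As $O(u+Xy)$ then meets $T$, it contains a monic polynomial $g$. Put $u_1:=u+Xy$. Then $u_1\equiv\bar u\pmod{XP}$, and $u_1\in\Um(P_g)$, i.e.\ $g^{M}\in O(u_1)$ for some $M$, while still $O(u_1)+XS=S$.

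The point of working modulo $g$ is the following elementary observation: for $v\in P$, if $g^{M}\in O(v)$ (that is, $v\in\Um(P_g)$) and $O(v)+gS=S$ (that is, $v\in\Um(P/gP)$), then $O(v)+g^{M}S=S$, whence $O(v)=S$ and $v\in\Um(P)$. Thus it suffices to modify $u_1$ by an element of $XP$ so that it becomes unimodular modulo the monic $g$ \emph{without destroying} its unimodularity over $S_g$. Arranging unimodularity modulo $g$ alone is easy: $S/gS$ is finite over $R$, of dimension $\le d\le r-1$, and $(X,u_1)$ is unimodular there, so Corollary~\ref{P8} (equivalently Theorem~\ref{P6}) over $S/gS$ produces a suitable modification. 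The difficulty is to do this compatibly with $S_g$: because $g$ is a nonzerodivisor, a global modification is pinned down simultaneously on the open set $D(g)$ and the closed set $V(g)$, so the two requirements cannot be met independently.

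This compatibility is the crux, and is exactly where the hypothesis $\height(\mathfrak J)\ge 1$ enters. It gives $\dim(R/\mathfrak J)\le d-1$ (any chain of primes containing $\mathfrak J$ starts in height $\ge 1$), which I would use to run an induction on $\dim(R)$: reducing modulo $\mathfrak J$ lowers the dimension while keeping $g$ monic, and the resulting modification over $(R/\mathfrak J)S$ is then transported back to $S$ by the transvection-transitivity of Theorem~\ref{ce18} and the local--global principle of Lemma~\ref{ce10}, with Horrocks' Theorem~\ref{u10} controlling the behaviour over $S_g$. The main obstacle is precisely this patching step: over a non-noetherian base one cannot, as in Lindel's argument, invert a single nonzerodivisor to make $P$ free, and the monic $g$ produced above does not by itself lower $\dim(S_g)$ below $r$; the substitute is the Lindel-pair and transvection machinery of Section~3, and the delicate part is checking that the local modifications are elementarily compatible so that Lemma~\ref{ce10} applies.
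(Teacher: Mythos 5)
Your first move (Theorem~\ref{P6} over the multiplicative set $T$ of monic, resp.\ bimonic, polynomials, using Lemma~\ref{u5} to drop the dimension by one) agrees with the paper, but after that the argument goes astray, and the step you yourself flag as ``the crux'' --- making the element unimodular on $V(g)$ without destroying unimodularity on $D(g)$ --- is left as an unproven sketch. The plan you outline for it (induction on $\dim R$, Theorem~\ref{ce18}, Lemma~\ref{ce10}, Horrocks) is not carried out, and as stated it is not clear it can be: you never explain how the modification produced over $(R/\mathfrak J)S$ is to be chosen inside $XP$ (so as to preserve the class in $P/XP$) \emph{and} inside $gS\cdot P$ or $O(u_1)P$ (so as to preserve unimodularity over $S_g$) simultaneously. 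This is a genuine gap, not a routine verification.

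The actual proof avoids the $D(g)$/$V(g)$ tension entirely by performing the two corrections in the opposite order and against the closed set $V(\mathfrak J)$ rather than $V(g)$. First, since $\height(\mathfrak J)\ge 1$ forces $\dim(S/\mathfrak J S)\le\dim(S)-1\le r-1$, Corollary~\ref{P8} applied modulo $\mathfrak J$ lets you replace $p$ by $p+(X-1)p_0$ so that $O(p)+\mathfrak J S=S$; pick $f\in\mathfrak J S$ with $(f,p)\in\Um(S\oplus P)$, hence $(f(X-1),p)\in\Um(S\oplus P)$. Only now apply Theorem~\ref{P6} over $T$ to this pair: the output is $q=p+f(X-1)p_1$, whose correction term is divisible by both $X-1$ and $f\in\mathfrak J S$, so $q\equiv p\pmod{(X-1)P}$ \emph{and} $O(q)+\mathfrak J S=S$ survives for free, while $O(q)$ now contains a monic (resp.\ bimonic). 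Lemma~\ref{u1} then gives $O(q)\cap R+\mathfrak J=R$, and since $\mathfrak J$ is the Jacobson radical this forces $O(q)\cap R=R$, i.e.\ $q\in\Um(P)$. No patching, local--global principle, or induction is needed; the hypothesis $\height(\mathfrak J)\ge 1$ is used only to license the preliminary application of Corollary~\ref{P8} modulo $\mathfrak J$, not in the way you propose.
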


\begin{proof}
Our proofs for both the cases $S = R[X], R[X, 1/X]$ are similar. So we shall only consider the case when $S = R[X, 1/X]$.

We choose $\bar{p} \in \Um(P/(X - 1)P), p \in P$. Then $(X - 1, p) \in \Um(S \oplus P)$. Since $r \geq \dim(S/\mathfrak{J}S) + 1$, by Corollary \ref{P8} we can add a multiple of $(X -1)$ to $p$ to assume that $p$ is unimodular modulo $\mathfrak{J}$ i.e. $ O(p) + \mathfrak{J}S = S$. Therefore, we have $f \in \mathfrak{J}S$ such that $(f, p) \in \Um(S \oplus P)$. This gives  $(f(X - 1), p) \in \Um(S \oplus P)$. Let $T$ denote the multiplicative closed set of all bimonic polynomials in $S$. Then $\dim (T^{-1}S) = \dim (S) - 1$ by Lemma \ref{u5}. By Theorem \ref{P6} we have $p_1 \in P$ such that $q = p + f(X - 1)p_1 \in \Um(T^{-1}P)$. 

This means that the ideal $O(q)$ contains a bimonic polynomial. We  have $O(q) +  \mathfrak{J}S = S$ as $p \equiv q \vpmod {\mathfrak{J}}$. This gives  $O(q) \cap R +  \mathfrak{J} = R$ by Lemma \ref{u1}.  So $q \in \Um(P)$. Note that $q \equiv p \vpmod {(X - 1)}$. So we are done.
\end{proof}

%
%
%
%
%

The next lemma follows from the well known Quillen Splitting Lemma whose proof is essentially contained in (\cite{qu}, Lemma  1, Theorem  1).

\begin{lemma}\label{u8}
Let $R$ be a ring and $P$ a projective module over $R$. Let $s, t \in R$ be such that $Rs + Rt = R$. Let $\sigma(T)$ be a $R_{st}[T]$ automorphism of $P_{st}[T]$ such that $\sigma(0) = id$. Then $\sigma(T) = \alpha(T)_t \beta(T)_s$, where $\alpha(T)$ is a $R_s[T]$ automorphism of $P_s[T]$ such that $\alpha(T) \equiv id \vpmod {tT}$ and $\beta(T)$ is a $R_t[T]$ automorphism of $P_t[T]$ such that $\beta(T) \equiv id \vpmod {sT}$.  Moreover if $J$ is an ideal of $R$ such that $\sigma \equiv id \vpmod J$, then we may also assume that $\alpha \equiv id \vpmod J$, $\beta \equiv id \vpmod J$.
\end{lemma}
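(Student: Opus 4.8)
The final statement to prove is Lemma~\ref{u8}, the Quillen Splitting Lemma with an added relative refinement. The plan is to reduce everything to the standard splitting result and then track the congruence modulo $J$ by a careful choice of auxiliary variables. First I would recall the core of Quillen's argument: given the $R_{st}[T]$-automorphism $\sigma(T)$ with $\sigma(0) = \mathrm{id}$, introduce a second indeterminate and consider the automorphism $\sigma(T + Y)\sigma(Y)^{-1}$ of $P_{st}[T,Y]$, which reduces to the identity when $T = 0$. The essential finiteness input is that any automorphism of $P_{st}[T]$ which is the identity at $T=0$ can be written, after localizing suitably and invoking that $s,t$ generate the unit ideal, as a product of an automorphism defined over $R_s$ and one defined over $R_t$; this is exactly the content of (\cite{qu}, Lemma 1) once one clears denominators. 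I would set up the patching by comparing the two localizations $R_s$ and $R_t$ over their common localization $R_{st}$.

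Next I would carry out the actual factorization. Using that $Rs+Rt=R$, write $1 = \lambda s + \mu t$ and substitute $T \mapsto \lambda s T$ and $T \mapsto \mu t T$ to split the conjugated automorphism into a piece $\alpha(T)$ that becomes defined over $R_s[T]$ and a piece $\beta(T)$ defined over $R_t[T]$, with $\sigma(T) = \alpha(T)_t\,\beta(T)_s$ after reindexing. The congruence conditions $\alpha(T)\equiv \mathrm{id} \pmod{tT}$ and $\beta(T)\equiv \mathrm{id}\pmod{sT}$ fall out of this substitution because each factor is built from $\sigma(T+Y)\sigma(Y)^{-1}$ evaluated at an argument divisible by $s$ (respectively $t$) and $T$, so the linear term in $T$ carries the extra factor of $t$ (respectively $s$).

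For the relative statement — the ``Moreover'' clause — I would observe that the hypothesis $\sigma \equiv \mathrm{id} \pmod J$ means $\sigma(T) - \mathrm{id}$ has all its coefficients in $JP_{st}[T]$, and the whole construction above is natural in $P$: the conjugate $\sigma(T+Y)\sigma(Y)^{-1}$ still lies in the relative group modulo $J$, and the substitutions $T\mapsto \lambda s T$, $T\mapsto \mu t T$ preserve this congruence since they only rescale the variable. Passing to the quotient by $J$ turns $\sigma$ into the identity, hence both factors $\alpha,\beta$ must become the identity modulo $J$ as well; equivalently one performs the entire factorization inside $\mathrm{Aut}(P, J)$ rather than the full automorphism group. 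The main obstacle I anticipate is not the splitting itself, which is classical, but making precise the claim that the factors can be taken with the sharp congruences $\bmod\, tT$ and $\bmod\, sT$ simultaneously with the $\bmod\, J$ congruence; this requires checking that the two congruence data live in independent directions (one in the variable $T$, one in the coefficient ideal $J$) and do not interfere, which I would verify by evaluating the constructed factors at $T=0$ and reading off the linear terms.
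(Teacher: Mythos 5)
The paper does not prove this lemma at all; it simply refers to (\cite{qu}, Lemma 1, Theorem 1), so your task was to reconstruct Quillen's argument, and your outline does follow the standard route: form the isotopy $E(Y,T)=\sigma(T+Y)\sigma(Y)^{-1}$, use finite generation of $P$ to see that after rescaling $T$ by a sufficiently high power of $s$ (resp.\ $t$) this isotopy descends to $R_t$ (resp.\ $R_s$), and then telescope $\sigma(T)=\bigl[\sigma(T)\sigma(aT)^{-1}\bigr]\sigma(aT)$ using comaximality. Two points need tightening. First, the comaximality relation you should use is $1=\lambda s^{N}+\mu t^{N}$ for $N$ large (which follows from $Rs+Rt=R$ by raising to a power), not $1=\lambda s+\mu t$: the denominators occurring in $\sigma$ and in $E(Y,T)$ involve arbitrary powers of $s$ and $t$, so the substitution $T\mapsto \lambda sT$ with only the first power does not clear them; Quillen's Lemma 1 produces a specific exponent $N$, and only arguments divisible by $s^{N}$ (resp.\ $t^{N}$) are guaranteed to descend. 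This does not affect the stated congruences, since divisibility by $s^{N}T$ implies divisibility by $sT$. Second, in the ``moreover'' clause the inference that $\sigma\equiv\mathrm{id}\pmod J$ forces both factors to be the identity modulo $J$ is not valid as written --- a product can equal the identity without either factor being so. The correct justification is the one you gesture at afterwards: each factor is explicitly of the form $\sigma(aT)\sigma(a'T)^{-1}$ (or a descent of such), every specialization $\sigma(aT)$ is $\equiv\mathrm{id}\pmod J$ because $\sigma$ is, and Quillen's descent construction via a finite presentation of $P$ visibly preserves the congruence modulo $J$; so the whole factorization takes place in the relative automorphism group. With those two repairs the argument is complete and agrees with the source the paper cites.
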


The following is an easy consequence of the above.

\begin{lemma}\label{u81}
Let $R$ be a ring and $P$ a projective module over $R[T]$. Let $s, t \in R$ be such that $Rs + Rt = R$. Assume that $P_{st}$ is extended from $R$.  Let $\sigma(T)$ be a $R_{st}[T]$ automorphism of $P_{st}$ such that $\sigma(0) = id$. Then $\sigma(T) = \alpha_t \beta_s$, where $\alpha$ is a $R_s[T]$ automorphism of $P_s$ such that $\alpha \equiv id \vpmod {tT}$ and $\beta$ is a $R_t[T]$ automorphism of $P_t$ such that $\beta \equiv id \vpmod {sT}$.
\end{lemma}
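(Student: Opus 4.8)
The plan is to deduce Lemma \ref{u81} directly from the Quillen Splitting Lemma \ref{u8} by exploiting the hypothesis that $P_{st}$ is extended from $R$. Since $P_{st}$ is extended, we may write $P_{st} = P_0 \otimes_{R_{st}} R_{st}[T]$ for some projective $R_{st}$-module $P_0$, which we may identify (after base change) with $P_{st}/TP_{st}$; equivalently $P_{st} \cong (P_{st}|_{T=0})[T]$. First I would use this identification to view the given automorphism $\sigma(T)$ of $P_{st}$ as an $R_{st}[T]$-automorphism of $P_0[T]$ satisfying $\sigma(0) = \mathrm{id}$, so that Lemma \ref{u8} applies verbatim to $P_0$ in place of the projective $R$-module $P$ appearing there.

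Concretely, I would apply Lemma \ref{u8} with the ring $R$ and the projective module there taken to be $P_0$ over $R_{st}$ (noting $R_s \cdot R_t = R$ descends to the localizations appropriately, and $s,t$ generate the unit ideal by hypothesis). This yields a factorization $\sigma(T) = \alpha(T)_t\,\beta(T)_s$ where $\alpha(T)$ is an $R_s[T]$-automorphism of $(P_0)_s[T]$ with $\alpha(T) \equiv \mathrm{id} \pmod{tT}$ and $\beta(T)$ is an $R_t[T]$-automorphism of $(P_0)_t[T]$ with $\beta(T) \equiv \mathrm{id} \pmod{sT}$. The remaining step is purely translational: under the identification of extended modules, $(P_0)_s[T]$ is canonically $P_s$ and $(P_0)_t[T]$ is canonically $P_t$, so $\alpha := \alpha(T)$ becomes an $R_s[T]$-automorphism $\alpha$ of $P_s$ and $\beta := \beta(T)$ becomes an $R_t[T]$-automorphism $\beta$ of $P_t$, with the same congruence conditions $\alpha \equiv \mathrm{id} \pmod{tT}$ and $\beta \equiv \mathrm{id} \pmod{sT}$.

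The only genuine content beyond a formal application is verifying that the hypothesis ``$P_{st}$ is extended from $R$'' is exactly what is needed to move from a splitting of \emph{polynomial automorphisms of an extended module} (the setting of Lemma \ref{u8}) to a splitting of automorphisms of the given module $P$ over $R[T]$. I expect the main (though mild) obstacle to be bookkeeping the identifications: one must check that the localization $(P_{st})_s$ over $R_s[T]$ agrees with $P_s$ as an $R_s[T]$-module, which follows because localization commutes with the tensor product defining extension, and that $\sigma(0) = \mathrm{id}$ transfers correctly under these identifications so that Lemma \ref{u8} is applicable. Since these are standard compatibilities of localization and base change for projective modules, the proof reduces to a short invocation of Lemma \ref{u8} once the extended structure is used to rewrite $P_{st}$ in the required form.
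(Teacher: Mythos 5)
There is a genuine gap in the final ``translational'' step, and it is precisely the step you flag as the only real content. Writing $P_{st}\cong P_0\otimes_{R_{st}}R_{st}[T]$ with $P_0=P_{st}/TP_{st}=(P/TP)_{st}$ and applying Lemma \ref{u8} to the projective $R$-module $\bar P=P/TP$ produces automorphisms of $\bar P_s[T]$ and $\bar P_t[T]$. Your claim that ``$(P_0)_s[T]$ is canonically $P_s$ and $(P_0)_t[T]$ is canonically $P_t$'' is exactly the assertion that $P_s$ is extended from $R_s$ and $P_t$ is extended from $R_t$; the hypothesis only gives that $P_{st}$ is extended from $R_{st}$, and extendedness does not descend from $R_{st}$ to $R_s$ or $R_t$ (localization commutes with the tensor product defining extension only in the direction of \emph{further} localization). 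Note also that a module over $R_{st}$ has no meaningful localization ``at $s$'' living over $R_s$, so Lemma \ref{u8} cannot be applied to ``$P_0$ over $R_{st}$'' as written. In the one place the lemma is used (the proof of Theorem \ref{u15}), $P_s$ does happen to be extended because $s$ lies in the Quillen ideal, but $P_t$ is not known to be extended, so your argument cannot produce the required automorphism $\beta$ of $P_t$.

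The intended deduction keeps the module $P$ itself and instead changes the base ring: introduce a fresh variable $W$, use the extended structure of $P_{st}$ to make sense of the substitution $T\mapsto TW$, and set $\hat\sigma(W)=\sigma(TW)$, an $R_{st}[T][W]$-automorphism of $P_{st}[W]$ with $\hat\sigma(0)=\mathrm{id}$ and $\hat\sigma(1)=\sigma$. Now apply Lemma \ref{u8} with base ring $R[T]$ and projective $R[T]$-module $P$, so that the two factors are automorphisms of $P_s[W]$ and $P_t[W]$ (not of extended modules), and specialize $W=1$; the congruences modulo $tT$ and $sT$ come out because $\hat\sigma(W)\equiv \mathrm{id}$ modulo $TW$. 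This is exactly the device the paper uses for the Laurent analogue, Lemma \ref{u82}, where $\hat\sigma(X)$ with $\hat\sigma(1)=\sigma$ is constructed, Lemma \ref{u8} is applied, and then $X=1$ is substituted.
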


There is no splitting lemma for automorphisms of projective modules over Laurent polynomial rings. However we have the following weaker version.

\begin{lemma}\label{u82}
Let $R$ be a ring and $P$ a projective module over $R[T, 1/T]$. Let $s, t \in R$ be such that $Rs + Rt = R$. Assume that $P_{st} = R_{st}[T, 1/T] \oplus Q[T, 1/ T]$ for some projective $R_{st}$-module $Q$. Let $\sigma(T) \in \ETrans(P_{st})$ such that $\sigma(1) = id$. Then $\sigma(T) = \alpha_t \beta_s$, where $\alpha$ is a $R_s[T, 1/T]$ automorphism of $P_s$ such that $\alpha \equiv id \vpmod {t(T - 1)}$ and $\beta$ is a $R_t[T, 1/T]$ automorphism of $P_t$ such that $\beta \equiv id \vpmod {s(T - 1)}$.
\end{lemma}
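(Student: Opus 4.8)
The plan is to deduce this Laurent-polynomial splitting from the polynomial version, Lemma \ref{u81}, via a standard change of variable. The statement concerns a transvection $\sigma(T) \in \ETrans(P_{st})$ over $R_{st}[T, 1/T]$ normalized so that $\sigma(1) = \mathrm{id}$, and we want to factor it as a product $\alpha_t \beta_s$ with $\alpha$ congruent to the identity modulo $t(T-1)$ and $\beta$ congruent modulo $s(T-1)$. The key difference from Lemma \ref{u81} is that we normalize at $T = 1$ rather than $T = 0$, because $T = 0$ is not a point of $\operatorname{Spec} R[T, 1/T]$; this is exactly why the congruences involve $T - 1$ instead of $T$.

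First I would introduce the new variable $U = T - 1$, so that $R_{st}[T, 1/T]$ becomes a ring in which $U = 0$ is an allowable specialization, and the normalization $\sigma(1) = \mathrm{id}$ reads $\sigma|_{U = 0} = \mathrm{id}$. The subtlety is that $R_{st}[T, 1/T]$ is not $R_{st}[U]$: inverting $T = 1 + U$ is extra. So rather than apply Lemma \ref{u81} verbatim, I would run the Quillen-splitting argument of Lemma \ref{u8} directly, but with $T$ replaced throughout by $U = T - 1$ and with the ambient ring taken to be $R[T, 1/T]$ instead of $R[U]$. Concretely, since $\sigma$ is an elementary transvection (a finite product of the elementary generators $\tau_\phi, \tau_q$ described in Definition \ref{ce1}), the hypothesis $P_{st} = R_{st}[T,1/T] \oplus Q[T,1/T]$ lets me write $\sigma(T) - \mathrm{id}$ as a matrix whose entries lie in $(T-1) R_{st}[T,1/T]$, and then conjugate/scale the variable by a unit $\lambda$ coprime to $st$, exactly as in Quillen's lemma, to split the dependence on $s$ and $t$. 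The Quillen patching step produces $\alpha$ over $R_s[T,1/T]$ with $\alpha \equiv \mathrm{id} \pmod{t\,U}$ and $\beta$ over $R_t[T,1/T]$ with $\beta \equiv \mathrm{id} \pmod{s\,U}$, which upon rewriting $U = T-1$ are the asserted congruences.

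The main obstacle is verifying that Quillen's dilation/patching machinery still applies when the polynomial variable is inverted. In the classical setting one uses that $\sigma(\lambda U)$ for a scalar $\lambda$ still makes sense and interpolates between $\sigma(U)$ and $\mathrm{id}$ as $\lambda \to 0$; here the substitution $T \mapsto 1 + \lambda(T-1)$ must be checked to be a well-defined automorphism of $R_{st}[T,1/T]$, which requires that $1 + \lambda(T-1)$ remain invertible. Since we only need the construction over the two localizations $R_s$ and $R_t$ and over $R_{st}$, and since the hypothesis explicitly grants that $P_{st}$ is free of the required split form, the entire argument takes place in the elementary subgroup $\ETrans$, where the matrix entries are honest Laurent polynomials and the substitution acts entry-wise. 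This is precisely the content of the remark preceding the lemma that ``there is no splitting lemma for automorphisms of projective modules over Laurent polynomial rings,'' so I expect the proof to succeed only for $\sigma \in \ETrans$ (not for a general automorphism), which matches the stated hypothesis. Once the substitution is checked to be legitimate on the elementary generators, the factorization $\sigma(T) = \alpha_t \beta_s$ follows by the same bookkeeping as in Lemma \ref{u8}, and the congruences transcribe directly.
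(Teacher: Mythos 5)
There is a genuine gap at the step you yourself flag as ``the main obstacle.'' Your plan is to set $U = T-1$ and run Quillen's dilation argument in the variable $U$ inside $R_{st}[T,1/T]$, which requires the substitutions $U \mapsto \lambda U$, i.e.\ $T \mapsto 1 + \lambda(T-1)$. But this is \emph{not} an endomorphism of $R[T,1/T]$: the element $1 + \lambda(T-1) = (1-\lambda) + \lambda T$ is in general not a unit of the Laurent polynomial ring (for reduced $R$ the units are all of the form $u T^n$ with $u \in R^{*}$), so one cannot substitute it for $T$ in expressions involving $1/T$. Your proposed resolution --- that since $\sigma \in \ETrans$ the ``substitution acts entry-wise'' on honest Laurent polynomial entries --- does not repair this: the parameters of the elementary generators lie in $Q[T,1/T]$ and $Q^{*}[T,1/T]$, and applying $T \mapsto 1+\lambda(T-1)$ to them runs into exactly the same non-invertibility. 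This failure of dilation is precisely why the paper remarks that there is no splitting lemma for automorphisms over Laurent polynomial rings, and restricting to $\ETrans$ alone does not make the dilation legal.

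The paper's proof avoids dilating $T$ altogether. Using $\sigma(1)=\mathrm{id}$ and the split form of $P_{st}$, it first rewrites $\sigma(T) = \prod_k \gamma_k^{-1}\, e_{i_kj_k}\bigl((T-1)g_{i_kj_k}\bigr)\, \gamma_k$ with the $\gamma_k$ constant (i.e.\ defined modulo $(T-1)$) and the parameters of the generators divisible by $T-1$. It then introduces a genuinely \emph{new} polynomial variable $X$ and scales only those parameters: $\hat\sigma(X) = \prod_k \gamma_k^{-1}\, e_{i_kj_k}\bigl(X(T-1)g_{i_kj_k}\bigr)\, \gamma_k$, an automorphism of $P_{st}[X]$ with $\hat\sigma(0)=\mathrm{id}$ and $\hat\sigma(1)=\sigma$. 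The classical Quillen splitting (Lemma \ref{u8}) is then applied in the variable $X$ over the base ring $R[T,1/T]$, where $s,t$ are still comaximal, giving $\hat\sigma(X) = \hat\alpha(X)_t\,\hat\beta(X)_s$ with $\hat\alpha \equiv \mathrm{id} \pmod{tX(T-1)}$ and $\hat\beta \equiv \mathrm{id} \pmod{sX(T-1)}$; evaluating at $X=1$ yields the assertion. This is the idea missing from your argument: the homotopy parameter must be an auxiliary variable multiplying the transvection parameters, not a rescaling of $T-1$ itself.
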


\begin{proof}
Let $e_{12}(\phi) , e_{21}(q) \in \ETrans(P_{st})$ be defined as $e_{12}(\phi) (a, p) = (a + \phi(p), p)$, $\phi \in Q^{*}[T, 1/T]$ and $e_{21}(q)(a, p) = (a, p + aq), q \in Q[T, 1/T]$. We have the decomposition $\sigma(T) = \prod_k \gamma_k^{-1} e_{i_kj_k}((T - 1)g_{i_kj_k}) \gamma_k$ where $\gamma_k \in \ETrans(P_{st}/ (T - 1)P_{st})$ and $g_{i_kj_k} \in Q[T, 1/T]$ or $Q^{*}[T, 1/T]$ according as $(i_k, j_k) = (2, 1)$ or $(1, 2)$. Now $\hat{\sigma}(X) = \prod_k \gamma_k^{-1} e_{i_kj_k}(X(T -1)g_{i_kj_k}) \gamma_k$ is a transvection acting on $P_{st}[X]$ such that  $\hat{\sigma}(1) = \sigma$. By Quillen Splitting Lemma \ref{u8} we have  $\hat{\sigma}(X) =  \hat{\alpha}(X)_t \hat{\beta}(X)_s$ where $\hat{\alpha}(X)$ is an automorphism of $P_s[X]$ such that $\hat{\alpha}(X) \equiv id \vpmod {tX(T - 1)}$ and $\hat{\beta}(X)$ is an automorphism of $P_t[X]$ such that $\hat{\beta}(X) \equiv id \vpmod {sX(T - 1)}$. Let $\hat{\alpha}(1) = \alpha$ and $\hat{\beta}(1) = \beta$. Then $\sigma = \alpha_t \beta_s$, $\alpha$ is an automorphism of $P_s$ such that $\alpha \equiv id \vpmod {t(T - 1)}$ and $\beta$ is an automorphism of $P_t$ such that $\beta \equiv id \vpmod {s( T - 1)}$. We are done.
\end{proof}

\begin{definition}\label{u14}\rm{(Quillen Ideal)}
Let $R$ be a commutative ring and $P$ a projective $R[T]$-module. Let $J(R, P) \subset R$ consist of all those $a \in R$ such that $P_a$ is extended from $R_a$. It follows from $($\cite{qu}, Theorem 1$)$ that $J(R, P)$ is an ideal and $J(R, P) = \sqrt{J(R, P)}$. From Lemma \ref{u11} it is easy to see that $\height J(R, P) \geq 1$. We call $J(R, P)$ the Quillen ideal of $R$.
\end{definition}

The following is a generalization of Proposition \ref{u71}.

\begin{theorem}\label{u15}
Let $R$ be a ring of finite dimension. Let $P$ be a finitely generated projective $R[X]$- module of rank $r \geq \dim (R[X])$. Then  the natural map $\Um(P) \rightarrow \Um(P/XP)$ is surjective. In particular $P$ has a unimodular element.
\end{theorem}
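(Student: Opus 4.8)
The plan is to reduce the general statement of Theorem \ref{u15} to the special case already handled in Proposition \ref{u71}, where the Jacobson radical is assumed to have height at least one. The obstruction to applying Proposition \ref{u71} directly is that nothing guarantees the Jacobson radical $\mathfrak{J}$ of $R$ has positive height. However, the Quillen ideal $J = J(R, P)$ introduced in Definition \ref{u14} does have height at least one, and $P_a$ is extended from $R_a$ for every $a \in J$. The strategy is therefore to work over the localizations of $R$ that make $J$ behave like a Jacobson radical, patch the resulting unimodular elements using the Quillen-type splitting lemmas, and descend back to $R$.

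\textbf{Step 1: Setup and reduction to a lift modulo $X$.} Let $\bar p \in \Um(P/XP)$ with $p \in P$. As in Proposition \ref{u71}, the pair $(X, p)$ determines a unimodular element $(X, p) \in \Um(R[X] \oplus P)$ after suitably enlarging, and the goal is to correct $p$ by a multiple of $X$ so that the corrected element is genuinely unimodular over $R[X]$ while remaining congruent to $p$ modulo $X$. The freedom of adding $X$-multiples is exactly what keeps us in the fiber of the map $\Um(P) \to \Um(P/XP)$.

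\textbf{Step 2: Work in the localization $R_{1+J}$.} The key trick is to pass to $R_{1+J}$, where the Quillen ideal $J$ lands inside the Jacobson radical of $R_{1+J}$; moreover $\height J \geq 1$ by Definition \ref{u14}, and the dimension and rank hypotheses are preserved under this localization. Over $R_{1+J}$ the hypotheses of Proposition \ref{u71} are met, so the natural map $\Um(P_{1+J}) \to \Um(P_{1+J}/XP_{1+J})$ is surjective. Hence we obtain $q \in \Um(P_{1+J})$ with $q \equiv p \pmod{X}$. Clearing denominators, there is $s \in 1 + J$ and a correction $p_1 \in P$ so that $p + Xp_1$ is unimodular after inverting $s$; that is, $p$ becomes unimodular over $R_s[X]$ for some $s \in 1+J$.

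\textbf{Step 3: Patch and descend.} Now I would combine the datum over $R_s$ with extendedness over the Quillen ideal to patch. Since $1 - s \in J$, one can write $1 = s + (\text{element of } J)$ and find elements $t$ of $J$ for which $P_t$ is extended from $R_t$ (by definition of $J$), so that $R s + R t = R$. Over $R_{st}$ the module is extended, and the unimodular element $q$ over $R_s$ together with the trivial lift $\bar p$ over $R_t$ can be glued using the splitting Lemma \ref{u81} (for the automorphism realizing the passage from $p$ to $q$, which is $\equiv \mathrm{id} \pmod{X}$ since the correction is an $X$-multiple). The splitting produces automorphisms $\alpha$ over $R_s$ and $\beta$ over $R_t$, each $\equiv \mathrm{id}$ along $X$, whose product is the global automorphism; applying this global automorphism to $\bar p$ lifted trivially yields a unimodular element of $P$ over $R[X]$ congruent to $p$ modulo $X$. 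The transitivity results of Theorem \ref{ce18} (or the action of $\ETrans(Q,I)$) are what make the local corrections elementary and hence patchable.

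\textbf{Main obstacle.} The hard part will be the patching in Step 3: ensuring that the local automorphism carrying $p$ to its unimodular correction over $R_s$ is of the form required by Lemma \ref{u81} — namely, extended-compatible over the overlap $R_{st}$ and trivial modulo $X$ — and that after gluing, the resulting global element genuinely lies in the correct fiber over $\Um(P/XP)$. Managing the interplay between the Quillen ideal (which controls extendedness and has positive height) and the comaximal pair $(s,t)$ needed for splitting is the delicate point; the reduction to Proposition \ref{u71} over $R_{1+J}$ is the conceptual heart, and the rest is bookkeeping to transport that conclusion down to $R$.
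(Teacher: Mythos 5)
Your proposal follows essentially the same route as the paper's proof: localize at $1+J(R,P)$ so that Proposition \ref{u71} applies, use extendedness of $P_t$ for $t$ in the Quillen ideal to lift trivially on the other patch, connect the two local lifts over the overlap by a relative elementary transvection (Proposition \ref{ce11} with the ideal $(X)$), split it via Lemma \ref{u81}, and glue. The only differences are cosmetic (the paper labels the comaximal pair as $s\in J(R,P)$ and $t\in 1+sR$, and separately disposes of the cases $\dim(R)=0$ and $s$ nilpotent), so the argument is correct as proposed.
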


\begin{proof}
Let $\bar{p} \in \Um(P/(X - 1)P)$ for  $p \in P$. We want to find $q \in \Um(P)$ whose image in $P/ XP$ is $\bar{p}$. If $\dim(R) = 0$, then $P$ is free by Lemma \ref{u11} and the theorem  follows obviously. So we assume that $\dim (R) \geq 1$ and $r \geq 2$. 

 Let $I = J(R, P)$ denote the Quillen ideal of $R$. We have $IR_{1 + I} \subset \mathfrak{J}$, where $\mathfrak{J}$ is the Jacobson radical of $R_{1+I}$. So $\height(\mathfrak{J}) \geq 1$. 
Therefore, by Proposition \ref{u71} we have $q_1 \in \Um(P_{ 1 + I})$ such that $q_1 \equiv p_{1 + I} \vpmod {X}$. We choose $s \in I$ such that $q_1 \in \Um(P_{ 1 + sR})$ and $q_1 \equiv p_{1 + sR} \vpmod {X}$.

If $s$ is nilpotent, then each element of $1 + sR$ is a unit. So  $q_1 \in \Um(P)$, $q_1 \equiv p \vpmod {X}$ and we are done. Therefore, we assume that $s$ is not a nilpotent element in $R$. We have the following patching diagram.
$$\begin{CD}
P @>>> P_s \\ @VVV  @VVV\\ P_{(1 + sR)} @>>> P_{s(1 + sR)}
\end{CD}$$

 $P_s$ is a projective module extended from $R$. So we have $q_2 \in \Um(P_s)$ such that $q_2 \equiv p_{s} \vpmod {X}$. Now $P_{s(1 + sR)}$ is  an extended projective module of rank $r \geq \dim(R[X]) \geq \dim(R) + 1 \geq \dim(R_{s(1 + sR)}) + 2$. So by Corollary \ref{P9} we have  $P_{s(1 + sR)} = R_{s(1 + sR)}[X]^2 \oplus P'$ for some extended projective module $P'$ of rank $r - 2$. By Proposition \ref{ce11} we have $\tau \in \ETrans(P_{s(1 + sR)}, (X))$ such that $\tau( {q_1}_s) = {q_2}_{(1 + sR)}$. We choose $t \in R$ such that $(s, t) = R$, $P_{st}$ is extended from $R_{st}$, $q_1 \in \Um(P_t)$, $q_1 \equiv p_{t} \vpmod {X}$, $\tau \in \ETrans(P_{st}, (X))$ and $\tau( {q_1}_s) = {q_2}_{t}$.

By Lemma \ref{u81} we have a splitting $\tau(X) = \alpha_t \beta_s$, where $\alpha$ is a $R_s[X]$ automorphism of $P_s$ such that $\alpha(X) \equiv id \vpmod {tX}$ and $\beta$ is a $R_t[X]$ automorphism of $P_t$ such that $\beta(X) \equiv id \vpmod {sX}$. Therefore, $\tau( {q_1}_s) = {q_2}_{t}$ gives ${\beta(q_1)}_s = { \alpha^{-1}(q_2)}_t$. Patching ${\beta(q_1)}$ and $\alpha^{-1}(q_2)$ we have $q \in \Um(P)$ such that $q_s = \alpha^{-1}(q_2)$ and $q_t = {\beta(q_1)}$. Note that  $q _s \equiv q_2 \equiv p_s \vpmod{X}$ and $q _t \equiv q_1 \equiv p_t \vpmod{X}$. Therefore, $q \equiv p \vpmod{(X}$ and we are through.
\end{proof}

The method of the proof of Theorem \ref{u15} won't work for projective modules over Laurent polynomial rings as the notion of Quillen ideal is not available. We therefore take a different approach. If $P$ is a projective $R$-module and $I$ an ideal of $R$, then $p \in \Um(P)$ gives $\bar{p} \in \Um(P/IP)$. We shall call $p$ a lift of $\bar{p}$.

\begin{proposition}\label{u9}
Let $R$ be a ring and $P$ a projective $R[T, 1/T]$-module of rank $r \geq \dim(R[X, 1/X])$. Let $s, t \in R$ be such that $Rs + Rt = R$. Suppose $P_s$ is extended from $R_s$. Let $p \in P$ be such that $\bar{p} \in \Um(P/(X - 1)P)$. Assume that $\bar{p}_t \in \Um(P_t/(X - 1)P)$ has a lift in $\Um(P_t)$. Then $\bar{p}$ has a lift in  $\Um(P)$. In particular $\Um(P) \rightarrow \Um(P/(X - 1)P)$ is surjective if  $\Um(P_t) \rightarrow \Um(P_t/(X - 1)P)$ is surjective.  
\end{proposition}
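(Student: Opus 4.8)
The plan is to use the patching diagram associated to the comaximal pair $(s, t)$ together with the weak splitting Lemma \ref{u82}, mimicking the structure of the proof of Theorem \ref{u15} but adapted to the Laurent setting where no Quillen ideal is available. Write $S = R[T, 1/T]$. We start from $p \in P$ with $\bar{p} \in \Um(P/(T-1)P)$, so that after choosing a splitting $\phi$ we have $(T-1, p) \in \Um(S \oplus \cdots)$ type data; more concretely the hypothesis gives us two local unimodular lifts which we must glue. On the $s$-side, since $P_s$ is extended from $R_s$, I would produce $q_2 \in \Um(P_s)$ lifting $\bar{p}_s$: because $P_s$ is extended of rank $r \geq \dim(R[X,1/X]) = \dim(R)+1 \geq \dim(R_s)+1$, Corollary \ref{P9} furnishes a unimodular lift, and in fact lets me write $P_{s(1+?)} $ in the form $R[X,1/X]^2 \oplus P'$ so that Proposition \ref{ce12} applies. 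On the $t$-side, the lift $q_1 \in \Um(P_t)$ of $\bar{p}_t$ is supplied directly by hypothesis.

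The next step is to compare the two lifts over the overlap $P_{st}$. Both $q_1$ and $q_2$ restrict to unimodular elements of $P_{st}$ lifting the same element $\bar{p}_{st}$, i.e. they agree modulo $(T-1)$. Since $P_{st}$ is extended from $R_{st}$ (it is so on the $s$-side, hence on the localization $st$) and has large rank, Proposition \ref{ce12} (the transitivity of $\ETrans$ on $\Um$ for extended Laurent modules, applied with the ideal $I = (T-1)$) yields a transvection $\tau \in \ETrans(P_{st}, (T-1))$ with $\tau((q_1)_s) = (q_2)_t$. I would then arrange that $\tau$ is normalized so that $\tau \equiv \mathrm{id} \pmod{(T-1)}$, which is exactly the input required for the weak splitting lemma. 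At this point I also pass to a finite localization: choose the actual comaximal $s, t$ so that all the data ($P_{st}$ extended, $q_1, q_2$ defined, $\tau$ defined over $P_{st}$) live over $R_{st}$.

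Now I apply Lemma \ref{u82}: because $P_{st} = R_{st}[T,1/T] \oplus Q[T,1/T]$ is extended and $\tau$ is an elementary transvection with $\tau(1) = \mathrm{id}$ (after renormalizing the base point of the $T$-line from $0$ to $1$, appropriate to the Laurent case), the lemma splits $\tau = \alpha_t \beta_s$ with $\alpha$ an automorphism of $P_s$ satisfying $\alpha \equiv \mathrm{id} \pmod{t(T-1)}$ and $\beta$ an automorphism of $P_t$ satisfying $\beta \equiv \mathrm{id} \pmod{s(T-1)}$. Then $\tau((q_1)_s) = (q_2)_t$ rewrites as $(\beta(q_1))_s = (\alpha^{-1}(q_2))_t$ over $P_{st}$, so by the patching diagram these glue to a global $q \in \Um(P)$ with $q_s = \alpha^{-1}(q_2)$ and $q_t = \beta(q_1)$. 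The congruences $\alpha \equiv \mathrm{id}, \beta \equiv \mathrm{id} \pmod{(T-1)}$ force $q \equiv p \pmod{(T-1)}$ locally at both $s$ and $t$, hence globally, so $q$ is the desired lift; the final "in particular" statement follows by taking $p$ an arbitrary preimage of a given element of $\Um(P/(T-1)P)$.

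The main obstacle I anticipate is the second paragraph: getting the two lifts to match over $P_{st}$ via a transvection that is congruent to the identity modulo $(T-1)$. This is where the weak splitting Lemma \ref{u82} is delicate, since it requires $\tau \in \ETrans(P_{st})$ with the normalization $\sigma(1) = \mathrm{id}$, and one must verify that the transvection produced by Proposition \ref{ce12} genuinely lies in the elementary transvection group (not merely $\Aut$) and can be taken relative to the ideal $(T-1)$. Ensuring that the localizations can all be performed over a single finite $R_{st}$ (so that $s, t$ remain comaximal in $R$ and the splitting lemma's hypotheses hold verbatim) is the bookkeeping step that must be handled carefully, but it is routine once the transvection on the overlap has been constructed.
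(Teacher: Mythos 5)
Your proposal is correct and follows essentially the same route as the paper: produce the lift on the $s$-side from extendedness, take the $t$-side lift from the hypothesis, match the two over the overlap by a transvection in $\ETrans(P_{st},(X-1))$ via Corollary \ref{P9} and Proposition \ref{ce12}, split it with Lemma \ref{u82}, and patch. The only cosmetic differences are that the paper first localizes at $1+sR$ before descending to a single $t$ with $(s,t)=R$, and that the existence of the $s$-side lift comes directly from extendedness (Corollary \ref{P9} is needed only to split off the free rank-two summand so that Proposition \ref{ce12} applies).
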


\begin{proof}
We have $\bar{p} \in \Um(P/(X - 1)P)$ for  $p \in P$. We want to find $q \in \Um(P)$ whose image in $P/ (X - 1)P$ is $\bar{p}$. We have the following patching diagram

$$\begin{CD}
P @>>> P_s \\ @VVV  @VVV\\ P_{(1 + sR)} @>>> P_{s(1 + sR)}
\end{CD}$$

Since $P_s$ is extended from $R$ we have $q_1 \in \Um(P_s)$ such that ${q_1} \equiv p_s \vpmod {(X - 1)}$. Now $1 + sR$ is a multiplicative closed set containing a multiple of $t$. So we have  $q_2 \in \Um(P_{(1 + sR)})$ such that ${q_2} \equiv p_{1 + sR} \vpmod {(X - 1)}$ by our hypothesis.

Now $P_{s(1 + sR)}$ is extended from $R_{s(1 + sR)}$ of rank $r \geq \dim (R[X]) \geq \dim(R) + 1 \geq \dim(R_{s(1 + sR)}) + 2$. So by Corollary \ref{P9} we have  $P_{s(1 + sR)} = R_{s(1 + sR)}[X, 1/X]^2 \oplus P'$ for some extended projective module $P'$ of rank $r - 2$.

We have a transvection $\tau \in \ETrans(P_{s(1 + sR)},(X - 1))$ such that $\tau({q_2}_s) = {q_1}_{(1 + sR)}$ by Proposition \ref{ce12}. We can find $t \in R$ such that $(s, t) = R$, $P_{st}$ is extended from $R_{st}$, $q_2  \in \Um(P_t)$, ${q_2} \equiv p_t \vpmod {(X - 1)}$,  $\tau \in \ETrans(P_{st},(X - 1))$,  $\tau({q_2}_s) = {q_1}_{t}$. 

By Lemma \ref{u82} we have $\tau = {\tau_1}_t{\tau_2}_s$ where $\tau_1 \in Aut(P_s, t(X - 1))$ and $\tau_2  \in Aut(P_t, s(X - 1))$.
Therefore, $\tau({q_2}_s) = {q_1}_{t}$ gives ${\tau_2(q_2)}_s = {\tau_1^{-1}(q_1)}_t$. Patching $\tau_2(q_2) \in \Um(P_t)$ and $\tau_1^{-1}(q_1)  \in \Um(P_s)$ we have $q \in \Um(P)$ such that $q_s = \tau_1^{-1}(q_1)$ and $q_t = \tau_2(q_2)$. Clearly $q _s \equiv q_1 \equiv p_s \vpmod{(X -1)}$ and $q _t \equiv q_2 \equiv p_t \vpmod{(X -1)}$. Hence $q \equiv p \vpmod{(X -1)}$ and we are done.
\end{proof}

We shall now prove an analogue of Theorem \ref{u15} for Laurent polynomial rings.
 
\begin{theorem}\label{u13}
Let $R$ be a ring of finite dimension. Let $P$ be a finitely generated projective $R[X, 1/X]$-module of rank $r \geq \dim (R[X, 1/X])$. Then the natural map $\Um(P) \rightarrow \Um(P/(X - 1)P)$ is surjective. In particular $P$ has a unimodular element. 
\end{theorem}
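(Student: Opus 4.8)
The plan is to follow the pattern of Theorem \ref{u15}, but since there is no Quillen ideal available for Laurent polynomial rings, I would replace the single Quillen-patching step by a \emph{partition-of-unity telescoping} built on Proposition \ref{u9}, in the same spirit as the telescoping argument at the end of Theorem \ref{ce18}. First I would dispose of the trivial case: if $\dim R = 0$ then $P$ is free by Lemma \ref{u11} and every $\bar p \in \Um(P/(X-1)P)$ admits the constant lift, so I may assume $\dim R = d \geq 1$, whence $r \geq \dim(R[X,1/X]) \geq d+1 \geq 2$. Fix $\bar p \in \Um(P/(X-1)P)$ and a lift $p \in P$; the goal is to find $q \in \Um(P)$ with $q \equiv p \pmod{(X-1)}$.

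Next I would introduce the ideal $I \subseteq R$ generated by all $s$ for which $P_s$ is extended from $R_s$. For each minimal prime $\mathfrak p$ the ring $R_{\mathfrak p}$ is zero-dimensional, so $P_{\mathfrak p}$ is free by Lemma \ref{u11}, and by finite presentation $P_{s_{\mathfrak p}}$ is already extended for some $s_{\mathfrak p} \notin \mathfrak p$; hence $I$ lies in no minimal prime, i.e. $\height I \geq 1$. When $I \neq R$ the localization $R_{1+I}$ is nonzero, $IR_{1+I}$ lies in its Jacobson radical, and that radical again has height $\geq 1$, so Proposition \ref{u71} applies over $R_{1+I}$ and lifts $\bar p$ there; clearing a single denominator yields an element $s_0 = 1 + x \in 1+I$ (so $1 - s_0 = -x \in I$) together with $q_0 \in \Um(P_{s_0})$ satisfying $q_0 \equiv p \pmod{(X-1)}$. (The degenerate case $I = R$ is easier: then a partition of unity by extended elements is available directly, and one may take the seed $s_0$ to be an extended element.)

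The telescoping step is then as follows. Writing $1 - s_0 = -x \in I$ as a finite combination $\sum_j c_j \sigma_j$ with each $P_{\sigma_j}$ extended and setting $s_j = c_j\sigma_j$, every $P_{s_j} = (P_{\sigma_j})_{c_j}$ is again extended (a localization of an extended module is extended) and $1 = s_0 + s_1 + \cdots + s_n$. Put $t_i = s_0 + s_1 + \cdots + s_i$, so that $t_0 = s_0$, $t_n = 1$, and $s_i + t_{i-1} = t_i$ is a unit in $R_{t_i}$. I induct on $i$: $\bar p_{t_0}$ lifts by construction, and assuming $\bar p_{t_{i-1}}$ lifts, I apply Proposition \ref{u9} over the ring $R_{t_i}$ to the comaximal pair $(s_i, t_{i-1})$ — here $(P_{t_i})_{s_i}$ is extended, and a lift over $R_{t_{i-1}}$ localizes to one over $R_{t_i t_{i-1}}$ — to conclude that $\bar p_{t_i}$ lifts. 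At $i = n$ this gives a lift of $\bar p$ over $R = R_{t_n}$, which is the asserted surjectivity. Finally, $P/(X-1)P$ is a projective $R$-module of rank $r \geq d+1$, so it carries a unimodular element by Corollary \ref{P9} (applied with $s = 0$, for which $\Spec(R) = V(0) \sqcup D(0)$ with $\dim V(0) = d$ and $D(0) = \emptyset$); surjectivity then produces the unimodular element of $P$.

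The hard part, and the reason Proposition \ref{u9} was set up in advance, is the \emph{failure of Quillen patching over $R[X,1/X]$}: one cannot assemble local extendedness on the sets $D(s_{\mathfrak p})$ into a single $s$ with $P_s$ extended on a large open set, so the asymmetric two-element statement of Proposition \ref{u9} cannot simply be invoked once with a single extended localization. Chaining it along a partition of unity subordinate to the extended locus is exactly the device that sidesteps this, and the only genuine bookkeeping is to check at each stage the comaximality $s_i + t_{i-1} = t_i \in (R_{t_i})^\times$ together with the rank inequality $r \geq \dim(R_{t_i}[X,1/X])$, both of which hold automatically.
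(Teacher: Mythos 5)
Your proof is correct and follows essentially the same route as the paper: define the ideal $I$ generated by elements $s$ with $P_s$ free/extended (so $\height I \geq 1$), use Proposition \ref{u71} over $R_{1+I}$ to obtain the seed $s_0 \in 1+I$ with a lift over $R_{s_0}$, and then telescope along a partition of unity $1 = s_0 + s_1 + \cdots + s_n$ via Proposition \ref{u9} applied to the comaximal pairs $(s_i, t_{i-1})$ in $R_{t_i}$. The minor variations (taking $I$ generated by all such $s$ rather than one per minimal prime, and handling the degenerate case $I = R$ separately) do not change the argument.
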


\begin{proof}
If $\mathfrak{p}$ is any minimal prime ideal  of $R$, then $P_{\mathfrak{p}}$ is a free $R_{\mathfrak{p}}[X, 1/X]$-module by Lemma \ref{u11}. So we have $s_{\mathfrak{p}} \in R - \mathfrak{p}$ such that $P_{s_{\mathfrak{p}}}$ is a free $R_{s_{\mathfrak{p}}}[X, 1/X]$-module. Let $I$ be the ideal generated by all $s_{\mathfrak{p}}$, $\mathfrak{p}$ a minimal prime. Then height of $I$ is at least one as it is not contained in any minimal prime ideal of $R$. Let $\bar{p} \in \Um(P/(X - 1)P)$ for  $p \in P$.

By Proposition \ref{u71} the map $\Um(P_{1 + I}) \rightarrow \Um(P_{1 + I}/(X - 1)P_{1 + I})$ is surjective. So we have $s_0 \in 1 + I$ such that $\bar{p}_{s_0} \in \Um(P_{s_0}/(X - 1)P_{s_0})$ has a lift $q \in \Um(P_{s_0})$ under the natural map $\Um(P_{s_0}) \rightarrow \Um(P_{s_0}/(X - 1)P_{s_0})$. Now $1 - s_0 \in I$ gives $s_1, \ldots, s_n \in I$ such that  $P_{s_i}, i \geq 1$ is free and $s_0 + s_1 + \ldots + s_n = 1$. Let $t_i = s_0 + s_1 + \ldots + s_i$. In the ring $R_{t_i}$, $s_{i} + t_{i - 1} = t_{i}$ is a unit. Also $P_{s_i t_{i}}, i \geq 1$ is free. So by Proposition \ref{u9},  $\bar{p}_{t_i} \in \Um(P_{t_i}/(X - 1)P_{t_i})$ has a lift in $\Um(P_{t_i})$ if $\bar{p}_{t_{i - 1}t_i } \in \Um(P_{t_{i - 1}t_i }/(X - 1)P_{t_{i - 1}t_i })$ has a lift in $\Um(P_{t_{i - 1}t_i })$ for $i \geq 1$. In particular $\bar{p}_{t_i} \in \Um(P_{t_i}/(X - 1)P_{t_i})$ has a lift in $\Um(P_{t_i})$ whenever $\bar{p}_{t_{i - 1}} \in \Um(P_{t_{i - 1} }/(X - 1)P_{t_{i - 1}})$ has a lift in $\Um(P_{t_{i - 1}})$  for $i \geq 1$.


Now $s_0 = t_0$. So  $q \in \Um(P_{t_0})$ is a lift of $\bar{p}_{t_0} \in \Um(P_{t_0}/(X - 1)P_{t_0})$. Therefore, $\bar{p}_{t_n} \in \Um(P_{t_n}/(X - 1)P_{t_n})$ has a lift in $\Um(P_{t_n})$. But $t_n = 1$. So we are done.
\end{proof}

Our Theorems \ref{u15}, \ref{u13} lead us to ask the following question.

\begin{question}\label{u16}
Let $R$ be a ring of finite dimension and $S = R[X_1, \ldots, X_n,  Y_1^{\pm 1}, \ldots,  \\ Y_m^{\pm 1}]$.  Let $P$ be a finitely generated projective $S$-module of rank $r > \dim(S) - (m + n)$. Then is the map $\Um(P) \rightarrow \Um(P/(X_1, \ldots, X_n, Y_1 - 1, \ldots, Y_m - 1)P)$ surjective? 
\end{question}

By Lemma \ref{u11} if $R$ is a zero dimensional ring, then any projective module over $ S = R[X_1, \ldots, X_n, Y_1^{\pm1}, \ldots, Y_m^{\pm1}]$ is free. Also $\dim(S) = m + n$ when $R$ is zero dimensional. So the answer to the above question is affirmative when dimension of the ring is zero.


\section{main results}
In this section we shall discuss our main results on cancellative nature of projective modules. All results are generalization of corresponding results for noetherian rings. We recall that a projective $R$-module $P$ is called cancellative if $Q \oplus P \cong Q \oplus P'$ for some projective $R$-modules $P', Q$ implies that $P \cong P'$. Equivalently  $P$ is called cancellative if $R^n \oplus P \cong R^n \oplus P', n \geq 1$ gives  $P \cong P'$. The following is easy.

\begin{lemma}\label{mr1}
Let $P$ be a projective $R$-module such that $Aut(R \oplus P)$ acts transitively on $\Um(R \oplus P)$. Then $R \oplus P \cong R \oplus P'$ gives $P \cong P'$.
\end{lemma}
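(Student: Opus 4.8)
The plan is to reduce the problem to producing an isomorphism $R \oplus P \to R \oplus P'$ that fixes the standard unimodular element $(1,0)$, and then to read off $P \cong P'$ from the resulting triangular shape of that isomorphism. The transitivity hypothesis is used exactly once, to arrange this normalization.

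First I would start from an arbitrary isomorphism $\theta : R \oplus P \xrightarrow{\sim} R \oplus P'$ and transport the unimodular element $(1,0) \in \Um(R \oplus P')$ back, setting $v = \theta^{-1}(1,0)$. Since $\theta^{-1}$ is an isomorphism and $(1,0)$ is unimodular, $v \in \Um(R \oplus P)$. Invoking the hypothesis that $\Aut(R \oplus P)$ acts transitively on $\Um(R \oplus P)$, I obtain $\sigma \in \Aut(R \oplus P)$ with $\sigma(v) = (1,0)$, i.e.\ $\sigma\theta^{-1}(1,0) = (1,0)$. Setting $\psi = \theta\sigma^{-1} : R \oplus P \to R \oplus P'$, the identity $\theta^{-1}(1,0) = \sigma^{-1}(1,0)$ gives $\psi(1,0) = \theta\sigma^{-1}(1,0) = \theta\theta^{-1}(1,0) = (1,0)$. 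Thus I will have replaced $\theta$ by an isomorphism $\psi$ fixing $(1,0)$.

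Next I would write $\psi$ in block form relative to the decompositions $R \oplus P$ and $R \oplus P'$: for $(r,x) \in R \oplus P$,
$$\psi(r,x) = (\alpha r + \phi(x),\; r\,q_0 + \gamma(x)),$$
where $\alpha \in R$, $\phi \in P^* = \Hom(P,R)$, $q_0 \in P'$, and $\gamma \in \Hom(P, P')$. Evaluating at $(1,0)$ and using $\psi(1,0) = (1,0)$ forces $\alpha = 1$ and $q_0 = 0$, so $\psi(r,x) = (r + \phi(x), \gamma(x))$ is ``upper triangular''. It then remains to check that the diagonal entry $\gamma$ is an isomorphism. Surjectivity of $\gamma$ follows from surjectivity of $\psi$: any $y' \in P'$ is the second coordinate of some $\psi(r,x)$, hence $y' = \gamma(x)$. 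Injectivity follows from injectivity of $\psi$: if $\gamma(x) = 0$ then $\psi(-\phi(x), x) = (0,0)$, whence $(-\phi(x),x) = 0$ and so $x = 0$. Therefore $\gamma : P \to P'$ is an isomorphism and $P \cong P'$.

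There is really no serious obstacle here; the only point requiring care is the passage from a general isomorphism to one fixing $(1,0)$, which is precisely where the transitivity hypothesis enters, together with the elementary verification that an upper-triangular isomorphism with the identity in the top-left corner must have an invertible bottom-right corner. The same bookkeeping shows, conversely, that any isomorphism $\gamma : P \to P'$ extends to such a $\psi$, but only the stated direction is needed for the lemma.
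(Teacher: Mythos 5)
Your proof is correct, and it is exactly the standard argument the paper has in mind when it labels this lemma as ``easy'' and omits the proof: use transitivity to normalize the isomorphism so that it fixes $(1,0)$, then observe that the resulting triangular block form forces the induced map $P \to P'$ to be an isomorphism. All steps check out, including the verification that $\gamma$ is bijective.
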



\begin{theorem}\label{mr2}
Let $R$ be a ring of dimension $d$ and $S = R[X]$ or $R[X, 1/X]$. Let $P$ be a finitely generated projective $S$-module of rank $r$.
Then $P$ is cancellative in the following cases. 

\begin{itemize}
\item [(1)] $P$ has a unimodular element and $r \geq d + 1$.

\item [(2)] Rank of $P$ is at least equal to $\dim(S)$.

\end{itemize}
\end{theorem}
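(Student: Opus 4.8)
The plan is to reduce cancellation to the transitivity statement of Theorem~\ref{ce18} by way of Lemma~\ref{mr1}, and to handle the two cases uniformly once a unimodular element is in hand. Recall that $P$ is cancellative precisely when $S^n \oplus P \cong S^n \oplus P'$ (with $P'$ projective, $n \geq 1$) forces $P \cong P'$, so an induction on $n$ will reduce everything to cancelling a single free summand.

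The core step I would isolate first is the following single-summand statement: if $M$ is a finitely generated projective $S$-module with a unimodular element and $\operatorname{rank}(M) \geq d+1$, then $S \oplus M \cong S \oplus M'$ implies $M \cong M'$. To prove it, use the unimodular element to split $M = S \oplus M_0$, where $M_0$ is projective of rank $\operatorname{rank}(M) - 1 \geq d$. Then $S \oplus M = S^2 \oplus M_0$, and Theorem~\ref{ce18}, taken in the absolute case $I = R$, shows that $\ETrans(S^2 \oplus M_0)$ acts transitively on $\Um(S^2 \oplus M_0) = \Um(S \oplus M)$. In particular $\Aut(S \oplus M)$ acts transitively on $\Um(S \oplus M)$, so Lemma~\ref{mr1} yields the desired cancellation of one copy of $S$.

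For part (1), I would then cancel $S^n$ by induction on $n$. Writing $S^n \oplus P \cong S^n \oplus P'$ as $S \oplus (S^{n-1}\oplus P) \cong S \oplus (S^{n-1}\oplus P')$, set $M = S^{n-1}\oplus P$. This $M$ has rank $r + (n-1) \geq r \geq d+1$ and carries a unimodular element (a free summand when $n \geq 2$, and the hypothesized element of $P$ when $n = 1$), so the single-summand statement cancels one $S$ to give $S^{n-1}\oplus P \cong S^{n-1}\oplus P'$, and the induction hypothesis finishes. For part (2), note that $\dim(S) \geq d+1$ in all cases, so $r \geq \dim(S)$ gives $r \geq d+1$; moreover Theorem~\ref{1} (Theorems~\ref{u15} and~\ref{u13}) guarantees that $P$ has a unimodular element. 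Hence the hypotheses of part (1) are met and we are done.

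The only point requiring care is verifying, at each stage of the induction, that the module playing the role of $M$ genuinely satisfies both hypotheses of the single-summand statement, namely rank $\geq d+1$ and possession of a unimodular element, so that Theorem~\ref{ce18} can legitimately be invoked each time. This is immediate here, since every $S^{k}\oplus P$ with $k \geq 0$ has rank $\geq r \geq d+1$ and, for $k \geq 1$, an evident free unimodular element, while for $k = 0$ the unimodular element is furnished by hypothesis in case (1) or by Theorem~\ref{1} in case (2). Thus I do not expect a genuine obstacle; the content of the theorem is concentrated entirely in Theorem~\ref{ce18}, and the present argument is the standard packaging of a transitivity result into a cancellation result.
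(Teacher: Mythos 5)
Your proposal is correct and follows exactly the route of the paper's (very terse) proof: Lemma \ref{mr1} combined with Theorem \ref{ce18} gives part (1), and part (2) reduces to part (1) via $\dim(S)\geq d+1$ together with Theorems \ref{u15} and \ref{u13}. The only difference is that you spell out the induction on the number of free summands and the rank bookkeeping, which the paper leaves implicit.
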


\begin{proof} 
In view of Lemma \ref{mr1}, (1) will follow from Theorem \ref{ce18}, (2) will follow from (1) and Theorems \ref{u15} and \ref{u13}.
\end{proof}

We don't know if our estimate is best possible. In particular we like to know the following.

\begin{question}\label{mr2e1}
Let $R$ be a ring of dimension $d$ and $S = R[X]$ or $R[X, 1/X]$. Let $P$ be a finitely generated projective $S$-modules of rank $r \geq d + 1$. 
Then does $P$ have a unimodular element. Is $P$ cancellative?
\end{question}

The following question  is towards a possible generalization of Theorem \ref{mr2} in several variables. 

\begin{question}\label{mr2e2}
Let $R$ be a ring of finite dimension  and $S = R[X_1, X_2, \ldots, X_n,  Y_1^{\pm 1},  Y_2^{\pm 1}\\,  \ldots,  Y_m^{\pm 1}]$.  Let $P$ be a finitely generated projective $S$-module of rank $r > \dim(S) - (m + n)$. Then is $P$ cancellative?
\end{question}

\begin{definition}{\rm(Strong S-ring)}\label{mr3}
A ring $R$ is said to be a strong S-ring if for any two consecutive prime ideals $\mathfrak{p} \subset \mathfrak{q}$ in $R$ the prime ideals $\mathfrak{p}[X] \subset \mathfrak{q}[X]$ are consecutive in $R[X]$.
\end{definition}

It has been shown in (\cite{malik}, Corollary 3.6) that if $R$ is of finite type over a Pr\"{u}fer domain then $R$ is a strong S-ring. By (\cite{kap}, Chapter 1, Section \S 5, Theorem 39) we have $\dim (R[X]) = \dim(R) + 1, dim R[X, 1/X] = \dim(R) + 1$. So the following is a trivial consequence of Theorems \ref{u15}, \ref{u13}, \ref{mr2} .

\begin{theorem}\label{mr3.5}
Let $R$ be a ring  of dimension $d$  of finite type over a Pr\"{u}fer domain and  $S = R[X]$ or $R[X, 1/X]$. Let $P$ be a finitely generated projective $S$-module of rank $r \geq d + 1$. Then the following hold.

\begin{itemize}
\item[1.] If $S = R[X]$, then the natural map $\Um(P) \rightarrow \Um(P/XP)$ is surjective.

\item[2.] If $S = R[X, 1/X]$, then the natural map $\Um(P) \rightarrow \Um(P/(X - 1)P)$ is surjective.
\end{itemize}
In particular $P$ has a unimodular element.  Moreover if $P'$ is another projective $S$-module of rank $r$ and $Q \oplus P \cong Q \oplus P'$ for some projective $S$-module $Q$, then $P \cong P'$.
\end{theorem}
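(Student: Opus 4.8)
The plan is to observe that the entire statement is a formal consequence of the general results already established in the paper (Theorems \ref{u15}, \ref{u13}, and \ref{mr2}), once one pins down the Krull dimension of $S$. The only genuinely new input is the hypothesis that $R$ is of finite type over a Pr\"{u}fer domain, and the sole use I would make of it is to force the dimension of $S$ down to the expected value $d+1$.

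First I would invoke the result of Malik and Mott (\cite{malik}, Corollary 3.6) to conclude that $R$, being of finite type over a Pr\"{u}fer domain, is a strong S-ring. Feeding this into the Kaplansky dimension formula (\cite{kap}, Chapter 1, \S 5, Theorem 39), I would deduce the equalities $\dim(R[X]) = \dim(R) + 1 = d + 1$ and $\dim(R[X, 1/X]) = d + 1$. This is the crux of the reduction: for a general ring Seidenberg's theorem (\cite{seidenberg}) only gives $d + 1 \leq \dim(R[X]) \leq 2d + 1$, so the rank hypothesis $r \geq d+1$ would in general be too weak to apply Theorems \ref{u15} and \ref{u13} (which require $r \geq \dim(S)$). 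For a strong S-ring, however, $\dim(S) = d + 1$, so the bound $r \geq d + 1$ coincides with $r \geq \dim(S)$ and the earlier machinery applies verbatim.

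With the dimension equality in hand, parts (1) and (2) are immediate. If $S = R[X]$, then $r \geq d + 1 = \dim(R[X])$, so Theorem \ref{u15} gives surjectivity of $\Um(P) \rightarrow \Um(P/XP)$ and, in particular, a unimodular element of $P$. If $S = R[X, 1/X]$, then $r \geq d + 1 = \dim(R[X, 1/X])$, so Theorem \ref{u13} gives surjectivity of $\Um(P) \rightarrow \Um(P/(X-1)P)$ and again a unimodular element. For the final cancellation clause, I would note that $P$ now has a unimodular element and satisfies $r \geq d + 1$, so Theorem \ref{mr2}(1) applies directly and yields that $P$ is cancellative; thus $Q \oplus P \cong Q \oplus P'$ forces $P \cong P'$.

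I do not expect any real obstacle here, since all the analytic and homotopy-theoretic work has already been carried out in Sections 3 and 4. The proof is essentially bookkeeping: the one point that requires care is citing the Malik--Mott theorem correctly to guarantee the strong S-ring property, as everything else follows by simply substituting $\dim(S) = d + 1$ into the hypotheses of the previously proved theorems.
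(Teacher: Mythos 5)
Your proposal is correct and follows exactly the same route as the paper: Malik--Mott's result that $R$ is a strong S-ring, Kaplansky's dimension theorem to get $\dim(S) = d+1$, and then direct application of Theorems \ref{u15}, \ref{u13}, and \ref{mr2}. No gaps; this matches the paper's own (very brief) argument.
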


Let $R$ be a  domain which is not a field and $T$  the multiplicative closed set generated by all the bimonic polynomials in $R[X, 1/X]$. Then by similar argument as in (\cite{brecosta}, Theorem 1) we can claim that $R$ is a Pr\"{u}fer domain of dimension one if and only if $T^{-1}R[X, 1/X]$ is also a Pr\"{u}fer domain of dimension one. Now by an induction argument using Theorem \ref{u10} we have the following.

\begin{theorem}\label{mr6}
Let $R$ be a Pr\"{u}fer domain of dimension one and  $S = R[X_1,  \ldots, X_n, Y_1^{\pm 1}\\, \ldots,  Y_m^{\pm 1}]$. Then any projective module over $S$ is extended from $R$.
\end{theorem}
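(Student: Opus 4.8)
The plan is to induct on the total number of variables $N = n + m$, using the two single-variable statements over $R[X]$ and $R[X,1/X]$ as the engine and Horrocks' theorem (Theorem \ref{u10}) to climb from fewer to more variables. The base case $N = 0$ is vacuous, so the heart of the matter is the step that adjoins one variable.

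For that step I would first isolate the following single-variable assertion: if $A$ is a Pr\"ufer domain of dimension one, then every finitely generated projective module over $A[Z]$ and over $A[Z,1/Z]$ is extended from $A$. To prove it, let $T$ be the multiplicative set of monic (respectively bimonic) polynomials. By Lemma \ref{u5} the localization $T^{-1}A[Z]$ (respectively $T^{-1}A[Z,1/Z]$) drops in dimension to one, and by the displayed claim proved just above the theorem --- together with its evident polynomial analogue, argued as in (\cite{brecosta}, Theorem 1) --- this localization is again a Pr\"ufer domain of dimension one. Over a Pr\"ufer domain of dimension one a projective module is a direct sum of invertible ideals, and such invertibles are governed by the Picard group; granting that $\operatorname{Pic}$ is unchanged under these two extensions, $T^{-1}P$ is extended from $A$. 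Horrocks' theorem (Theorem \ref{u10}) then transfers the property ``extended'' back from $T^{-1}A[Z]$ to $A[Z]$, establishing the single-variable assertion.

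With the engine in hand the induction runs as follows. Writing $S = S'[Z]$ or $S'[Z,1/Z]$, where $S'$ carries $N-1$ variables, I would apply Quillen's local--global principle in the variable $Z$ (the Quillen ideal of Definition \ref{u14} has height at least one) to reduce the claim ``$P$ is extended from $S'$'' to its verification after localizing the coefficients at each maximal ideal. After this reduction the relevant coefficient rings are localizations of $R$, hence valuation domains of dimension at most one, and the single-variable engine applies; when the coefficient ring is such a valuation domain one may iterate the dimension descent down to dimension zero, where Lemma \ref{u11} gives freeness outright and anchors the Picard bookkeeping. Once $P$ is extended from $S'$, the induction hypothesis identifies the underlying $S'$-module as extended from $R$, and tensoring up yields $P \cong Q \otimes_R S$ for a projective $R$-module $Q$.

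The main obstacle is keeping the hypotheses stable under the operations the induction forces. Stripping a variable replaces the Pr\"ufer base $R$ by the higher-dimensional ring $S'$, so the engine cannot be applied to $S'$ verbatim; the remedy is to run the reduction over $R$ itself and pass, via Quillen patching, to the local rings $R_{\mathfrak{m}}$, where the structure of projectives is transparent. The one genuinely delicate point is the bookkeeping of invertible modules: I must check that every invertible ideal arising over $T^{-1}A[Z]$ or $T^{-1}A[Z,1/Z]$ already comes from $A$, that is, that the Picard group does not grow, for only then does ``direct sum of invertibles over the localization'' descend to ``extended from $A$.'' This is exactly where the stability of the class of Pr\"ufer domains of dimension one under $T^{-1}$ --- the claim preceding the theorem --- carries the essential weight.
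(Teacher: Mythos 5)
Your overall strategy --- induct on the number of variables, localize at the monic (resp.\ bimonic) polynomials, use the stability of ``Pr\"ufer domain of dimension one'' under this localization, and transfer back with Horrocks' Theorem \ref{u10} --- is the same as the paper's. But the way you organize the inductive step does not close. Writing $S = S'[Z]$ or $S'[Z,1/Z]$ and applying Quillen patching at the maximal ideals of $R$ reduces you to showing that $P_{\mathfrak{m}}$ is extended from $S'_{\mathfrak{m}}$; the coefficient ring of the distinguished variable $Z$ is then $R_{\mathfrak{m}}[X_1,\dots,Y_m^{\pm 1}]$, still a polynomial/Laurent ring in $N-1$ variables over the valuation ring $R_{\mathfrak{m}}$ and not a one-dimensional Pr\"ufer domain, so your ``single-variable engine'' does not apply and you are in effect assuming the statement under proof. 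Moreover, when the stripped variable is one of the $Y_j$, the Quillen local--global principle you invoke is not available at all: the paper notes explicitly (just before Lemma \ref{u82}) that there is no splitting lemma for automorphisms of projective modules over Laurent polynomial rings. The intended induction avoids both problems by localizing $S$ at the set $T$ of monics in $R[X_1]$ (resp.\ bimonics in $R[Y_1,1/Y_1]$) --- coefficients in $R$, not in $S'$ --- so that $T^{-1}S$ becomes an $(N-1)$-variable polynomial/Laurent ring over the \emph{new} one-dimensional Pr\"ufer base $T^{-1}R[X_1]$; this is exactly why the stability claim is stated. One applies the induction hypothesis there, observes that elements of $T$ are monic in $X_1$ over $S'$, descends to $S$ by Theorem \ref{u10} taken over the base $S'$, and finishes by applying the induction hypothesis once more to $P/(X_1)P$ over $S'$. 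No local--global principle is needed.

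Second, the step you explicitly ``grant'' --- that $\operatorname{Pic}$ does not grow under $A \to T^{-1}A[Z]$, equivalently that every finitely generated projective module over the localized one-dimensional Pr\"ufer domain is extended from $A$ --- is not bookkeeping but the entire content of the rank-one part of the argument: without it, ``direct sum of invertible ideals of $T^{-1}A[Z]$'' does not descend to ``extended from $A$,'' and Horrocks has nothing to transport. A complete write-up must supply this (for instance by showing that every invertible ideal of $T^{-1}R[X]$ extends from an invertible ideal of $R[X]$ and then using $\operatorname{Pic}(R[X]) = \operatorname{Pic}(R)$ for the seminormal ring $R$). As it stands, your proposal leaves the crucial case of non-free projectives unproved.
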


We conclude this article with the following question.


\begin{question}\label{mr7}
Is Theorem \ref{mr6} true for Pr\"{u}fer domains of arbitrary dimension? When $m = 0$ the question is answered affirmatively in \cite{lesi}.
\end{question}

\begin{acknowledgement}
This paper is a part of my PhD thesis. I wish to thank my advisor Ravi A. Rao for continuous encouragement during the preparation of this article.  I am also thankful to SPM Fellowship, CSIR,  (SPM - 07 - /858(0051)/ 2008 - EMR - 1) for the financial support.
\end{acknowledgement}


\end{document}